\newcommand{\papertitle}{\makebox[\textwidth]{Dueling Convex Optimization with General Preferences}}
\newcommand{\tk}[1]{\textcolor{magenta}{\bfseries\itshape\{TK: #1\}}}
\newtheorem{thm}{Theorem}%[section]
\newtheorem{lem}[thm]{Lemma}
\theoremstyle{definition}
\newtheorem{defn}[thm]{Definition}
\newtheorem{assump}{Assumption}%[section]
\newtheorem{rem}{Remark}
\newcommand{\R}{{\mathbb R}}
\newcommand{\N}{{\mathbb N}}
\renewcommand{\P}{{\mathbf P}}
\newcommand{\E}{{\mathbf E}}
\newcommand{\I}{{\mathbf I}}
\newcommand{\1}{{\mathbf 1}}
\newcommand{\cB}{{\mathcal B}}
\newcommand{\cS}{{\mathcal S}}
\newcommand{\cN}{{\mathcal N}}
\newcommand{\cD}{{\mathcal D}}
\newcommand{\cH}{{\mathcal H}}
\newcommand{\tf}{{\tilde {f}}}
\newcommand{\tw}{\tilde \w}
\newcommand{\tc}{\tilde c}
\newcommand{\tkappa}{\tilde \kappa}
\newcommand{\g}{{\mathbf g}}
\newcommand{\n}{{\mathbf n}}
\renewcommand{\u}{{\mathbf u}}
\renewcommand{\v}{{\mathbf v}}
\newcommand{\w}{{\mathbf w}}
\newcommand{\x}{{\mathbf x}}
\newcommand{\y}{{\mathbf y}}
\newcommand{\z}{{\mathbf z}}
\renewcommand{\ref}{\cref}
\newcommand{\dco}{\emph{Dueling Convex Optimization}}
\newcommand{\acgdco}{{G-DCO}}
\newcommand{\pgd}{{$p^{th}$-degree-scaled \emph{Gradient}}}
\newcommand{\kgd}{\textit{Relative-Gradient-Descent}}
\newcommand{\ekgd}{\textit{Epoch-RGD}}
\newcommand{\rhoo}{{\tilde \rho_p}}
\newcommand{\pprox}{$p$-th order proxy}
\newcommand{\nabf}{{\nabla f}}
\DeclareMathOperator{\sign}{\operatorname{sign}}
\newcommand{\dotp}{\boldsymbol\cdot}
\newcommand{\holdon}{D}%{\norm{\w_1-\w^*}}
\title{\papertitle}%\\in the Plackett-Luce model} 
\author{
Aadirupa Saha%
\thanks{Toyota Technological Institute at Chicago (TTIC); {\tt aadirupa@ttic.edu}.}
\and 
Tomer Koren%
\thanks{Blavatnik School of Computer Science, Tel Aviv University and Google Tel Aviv; {\tt tkoren@tauex.tau.ac.il}.} 
\and Yishay Mansour%
\thanks{Blavatnik School of Computer Science, Tel Aviv University and Google Tel Aviv; {\tt mansour.yishay@gmail.com}.}
}
\date{}
\begin{document}

\maketitle

%\vspace{-10pt}
\begin{abstract}
We address the problem of \emph{convex optimization with dueling feedback}, where the
goal is to minimize a convex function given a weaker form of \emph{dueling} feedback. Each query consists of two points and the dueling feedback returns a (noisy) single-bit binary comparison of the function values of the two queried points.
The translation of the function values to the single comparison bit is through a \emph{transfer function}.
% 
%This is unlike the traditional optimization setting which assumes an oracle access to gradient or function value information. 
%
This problem has been addressed previously for some restricted classes of transfer functions, but here we consider a very general transfer function class which includes all functions that can be approximated by a finite polynomial with a minimal degree $p$.
%degree-$p$ polynomial link function based preference class as the underlying dueling-feedback model. %
%
Our main contribution is an efficient algorithm with convergence rate of $\smash{\widetilde O}(\epsilon^{-4p})$ for a smooth convex objective function, and an optimal rate of $\smash{\widetilde O}(\epsilon^{-2p})$ when the objective is smooth and strongly convex.
% 
% Our work closes the open problem of function optimization with general polynomial transfer functions, beyond the sign feedback and strong convexity assumptions.
%
%The efficacy of our proposed algorithms are justified through empirical comparisons with an existing baseline. % which shows them to outperform the state of the art.
\end{abstract}

\iffalse%%%%%%%%%%%%%%%%%%%

%\vspace{-10pt}
\begin{abstract}
We address the problem of \emph{online convex optimization (OCO) with dueling feedback} where, the learner is restricted to query a weaker form of relative feedback which only reveals a single $0/1$-bit binary comparison feedback (noisy) of two queried points. 
% 
This is unlike the traditional optimization setting which assumes an online oracle access to gradient or at least zeroth order function information, and despite our goal is still to find the optimal point with the least possible query complexity. 
%
The problem has been addressed previously for some restricted classes of preference feedback \cite{SKM21,Jamieson12}, but we consider a general degree-$p$ polynomial link function based preference class as the underlying dueling-feedback model. %
%
The main contribution of this work lies in proposing a runtime efficient algorithm with provably optimal convergence rates: In particular, for $\beta$-smooth convex functions, we show a convergence rate of $\tilde O(\epsilon^{-4p})$. 
%
Further, if the function is also $\alpha$-strongly convex in addition, we show a convergence rate of $\tilde O(\epsilon^{-2p})$, which is provably optimal.
Our work closes the open problem of function optimization with general polynomial link functions, beyond the sign feedback and strong convexity assumptions.
%
%The efficacy of our proposed algorithms are justified through empirical comparisons with an existing baseline. % which shows them to outperform the state of the art.
\end{abstract}

\fi %%%%%%%%%%%%%%%%%%%%%%%%%%%%%%%

% \vspace{-13pt}
\section{Introduction}
\label{sec:intro}
% \vspace{-5pt}
%The binary comparison feedback has Due to the inherent exploration-vs-exploitation tradeoff of the learning framework and several advantages of preference feedback \citep{Busa14survey,Yue+09} from the user perspective, and in
%

Convex optimization algorithms are fundamental across many fields, including machine learning. 
Most commonly, convex optimization is studied in a first-order gradient oracle model, where the optimization algorithm may query gradients of the objective function.  A more limited model is that of zero-order oracle access, where the optimization algorithm may only query function values of the objective rather than gradients.  Both of these models are extremely well-studied, and the optimal convergence rates in each of them are well known~\cite[see, e.g.,][]{nesterovbook}.

However, there are optimization scenarios where even zero-order access is unavailable or unreliable. Indeed, studies have shown that it is often easier, faster and involves lesser bias to collect feedback on a relative scale rather than asking for reward/loss feedback on an absolute scale. For example to understand the liking for a given pair of items, say (A,B), it is easier for the users to answer preference-based queries like: ``Do you prefer item A over B?", rather than their absolute counterparts: ``How much do you score items A and B in a scale of [0-10]?".
Consequently, relative preference queries are extremely common in 
% Drawing motivation from the above, many practical applications of optimization may only allow a $\{\pm1\}$-comparison feedback indicating a noisy preference of two (or more) queried points depending on their function values in 
domains such as recommendation systems, online merchandises, search engine optimization, crowd-sourcing, drug testing, tournament ranking, social surveys, etc \citep{Hajek+14,KhetanOh16}. %\blue{(A notable exception is \cite{Jamieson12}, which we discuss later.)}
%\tk{relate to dueling bandits in a sentence}
This motivated the introduction of dueling bandits \cite{Yue+09} in the online learning setting.

Drawing motivation from the above, in this paper we study a challenging convex optimization model where the access to the objective function is through a \emph{noisy pairwise comparison oracle}.  
Namely, given an underlying convex objective function $f: \cD \mapsto \R$ ($\cD \subseteq \R^d$ being a convex decision space), at each step the optimization algorithm is allowed to query two points $\w,\w'$ in the feasible domain, upon which only a noisy $1$-bit feedback $o_t \in \{\pm1\}$ is revealed, whose expected value indicates their relative function values.  More specifically, the feedback signal $o_t$ is such that
\begin{align*} \label{eq:pref_model}
    \E[o_t \mid \w,\w'] = \rho( f(\w) - f(\w') ) ,
\end{align*}
where $\rho : \R \mapsto [-1,1]$ is a (possibly nonlinear) \emph{transfer function} mapping difference in function values to a signed preference signal, and $\rho( f(\w) - f(\w') )$ is interpreted as the degree to which $\w$ should be preferred over $\w'$, or vice versa.  Provided such access, our goal is to find a feasible point that approximately minimizes the objective $f$.
Borrowing terminology from the literature on dueling bandits, we call our framework \emph{General Dueling Convex Optimization} (G-DCO) for general transfer functions.

Noisy pairwise comparison access could potentially be significantly weaker than the already weak zero-order access.  Indeed, a special case of this framework has been studied by \citet{Jamieson12} who focused on polynomial transfer functions of the form $\rho(x) = c\sign(x)|x|^p$ and gave tight upper and lower bounds in the pairwise comparison model for \emph{strongly convex and smooth} objectives.  Their results indicate that as $p$ grows larger, the best achievable convergence rate degrades quickly, and already when $p > 1$ this rate becomes strictly inferior to that of zero-order optimization.  Much more recently, \citet{SKM21} considered a similar pairwise comparison model with a different type of a transfer function, namely the sign function $\rho(x) = \sign(x)$ $(p=0)$, and established fast convergence rates for this case exclusively.

Both of these works point us at a some fundamental questions:  Can we design algorithms for dueling convex optimization that are able to leverage more general transfer functions?  Can we converge to a minimizer even when the transfer is unknown to the algorithm?  And what properties of the transfer function dictate the achievable optimization rates?  In this paper, we make progress towards answering these questions.

\subsection{Our contributions}

% To summarize, our main contributions are the following:
We make the following main contributions:
\begin{enumerate}[leftmargin=*,label=(\roman*)]
	\item We formalize a generalized dueling convex optimization setting for convex optimization with pairwise-preference feedback given by a general transfer function $\rho$, which is only assumed to be well-behaved around the origin (see \cref{sec:setup} for a precise definition of the query model and optimization objective). Our framework generalizes and significantly extends two existing settings of optimization with comparison feedback~\cite{Jamieson12,SKM21} (\cref{sec:prob}). 
	
	\item We give a novel algorithm for dueling convex optimization with a general transfer function $\rho$, called \kgd\, (\cref{alg:pgd}), which relies on performing a `generalized gradient descent' like update on the \pgd\, of the objective function $f$ (see \cref{def:pgd}, \cref{lem:estpgd}). \cref{rem:estpgd} explains how \pgd\, is a generalization of gradient estimate and smoothly interpolates between different types of descent directions. 
	We prove that when the optimization objective function is smooth, our algorithm needs an order of $O(\epsilon^{-4p})$ queries to the pairwise-preference oracle for finding an $\epsilon$-optimal point (see~\cref{thm:pgd}).  Here, $p$ is the minimal non-zero degree in a series expansion of $\rho(x)$ around zero (\cref{sec:algo}).

% 	\item We also provide the regret analysis of our \pgd\, algorithm for a class of \plink. \cref{thm:reg} shows that for any class of \plink\, with $p\ge 1$, our algorithm achieves a regret guarantee of $O(\cdot)$.
	
	\item We further show that our algorithm can achieve faster convergence rates when the function is additionally also strongly convex (\cref{alg:epgd}, \cref{thm:epgd}). Concretely, we show that in this case only $O(\epsilon^{-2p})$ pairwise queries are sufficient for $\epsilon$-convergence.  The latter rate is shown to be tight as it matches existing lower bounds (for certain transfer functions) for strongly convex optimization with comparison feedback due to \cite{Jamieson+15} (\cref{sec:strong}).
\end{enumerate}

Our algorithmic results complement those of \cite{SKM21}, who only considered the sign transfer function.
Compared to the results of \citet{Jamieson12}, we are able to handle both the convex and strongly convex cases (while they only deal with the strongly convex case), and we only require the transfer to be well-behaved around the origin (while they rely on its global structure\footnote{Indeed, their algorithm relies on a line-search procedure at each step, employing the comparison oracle for implementing a one-dimensional noisy binary search.}).  Thus, we are able to encompass a much wider variety of transfer functions whose local behavior around zero is approximated by a polynomial---this includes virtually all functions that admit a series expansion around the origin.

% {In summary, we give the first general \acgdco\, algorithm for OCO with dueling feedback setup. Our algorithm yields provably optimal convergence rates for a general class of smooth utility function based polynomial link preferences. We also analyze the regret guarantee of our proposed method for a certain class of polynomial link functions}

%Overall, we are the first to apply normalized gradient descent based techniques for optimization with preference feedback, consequently our proof techniques are new and involves novelty. Due to space constraints all the proof details are moved to the Appendix.

\subsection{Related work}

\paragraph{Dueling Bandits.} 

Due to the widespread applicability and ease of data collection with relative feedback, learning from preferences has gained much popularity in the machine learning community and widely studied as the problem of \emph{Dueling-Bandits} over last decade \citep{Ailon+14,Yue+12,Zoghi+14RUCB,Zoghi+14RCS,Zoghi+15,ADB,Adv_DB,Busa21survey}, which is an online learning framework that generalizes the standard multi-armed bandit (MAB) \citep{Auer+02} setting for identifying a set of `good' arms from a fixed decision-space (set of items) by querying preference feedback of actively chosen item-pairs. 

\paragraph{Limitations of Existing Dueling Bandit techniques.}

Although the relative feedback variants of stochastic MAB problem have been widely studied in the literature, the majority of the existing techniques are restricted to finite decision spaces and stochastic setting which primarily rely on estimating the entries of the underlying preference-matrix . 
These settings, though important as basic steps, are mostly impractical for all real world scenarios which often involves large (or potentially infinite) decision spaces, where lies one of the primary motivation of this work. On the other hand, from an optimization point of view, our work is a key step towards analyzing the fundamental performance limits of function minimization using the weaker form of $0/1$ bit relative preferences. The few existing attempts along this line is discussed in Related Works.

\paragraph{Dueling Bandits in continuous spaces.}
%\label{rel_work}
Surprisingly, following the same spirit of extending standard multi-armed bandits (MAB) to continuous decision spaces (as in \emph{linear} or \emph{GP-bandits}), there has not been much work on the continuous extension of the Dueling Bandit problem for large (and structured) decision spaces. 
The works in \cite{Sui+17,PBO} did attempt a similar objective, however, without any satisfactory theoretical performance guarantees. In another recent work, \cite{Brost+16} address the problem of regret minimization in continuous \emph{Dueling Bandits}, however without any finite time regret guarantee of their proposed algorithms.
Recently, \cite{iyengar19,S21} consider the problem of regret minimization from $k$-subsetwise preference feedback  ($k=2$ boils down to the dueling setup) on structured decision spaces, although their underlying utility function is assumed to be only linear, unlike any general convex function considered in our work; moreover, their preference model is restricted only to the class of Multinomial Logit (MNL) based random utility model, unlike the general link function based preference feedback class that we considered.
\cite{CDB,SK21arxiv} represents another line of dueling bandit work, which incorporates context specific dueling preference model. Specifically, their algorithms are designed to compete against an abstract policy set of context to action mappings w.r.t. `minimax-regret'. Their algorithms are also designed to handle potentially large decision spaces, although, the regret objectives are focused to identifying the von-Neumann distribution of the underlying preference models, which is very different from the \emph{function minimization with dueling feedback} point of view that we considered.

\paragraph{Optimization for dueling feedback.}
Along the line of optimization for dueling feedback, \cite{Yue+09} is the first to address the regret minimization problem for fixed functions $f$ (arm rewards) with preference feedback,
although their techniques are majorly restricted to the class of smooth and differentiable preference functions that allows gradient estimation. This is the main reason they could directly apply the classical one-point gradient estimation based \emph{Bandit Gradient Descent (BGD)} algorithm of \cite{Flaxman+04} for the setting, unlike us.
Moreover, another limitation of their framework is their optimization objective is defined in terms of the `preferences' which are directly observable and hence easier to optimize, as opposed to defining it w.r.t. $f$ as considered in this work. 
Following up \cite{Yue+09}, \cite{CntDB} considers the similar problem of dueling bandits on continuous arm set but under rather restrictive sets of assumptions: Twice continuously differentiable, Lipschitz, strongly convex and smooth score/reward function, which are often impractical for modeling any real-world preference feedback.

Closest to our work in spirit are \cite{Jamieson12} and \cite{SKM21}, both of which precisely focus on function optimization with relative pairwise preference feedback.
The latter however is designed to work only under sign based relative feedback which reveals the exact information of which of the two queried points have smaller function value. We instead consider a very general class of polynomial based preference functions (see \cref{sec:prob}) which generalizes the \emph{sign-feedback} model of \cite{SKM21} as a special case. 
While the first, although gives provably optimal convergence rates, their guarantees are restricted to the `well behaved' class of strongly-convex and smooth functions (with bounded Lipschitz gradient). 
The assumptions and consequently their techniques are hence quite restrictive: A major hindrance towards generalizing their algorithmic ideas to a general function class is owning to their line-search based coordinate descent algorithm which is known to fail without strong-convexity.
On the other hand, our algorithm is shown to yield optimal convergence guarantees for more general class of smooth-convex functions. Additionally {we match the convergence rate of} \cite{Jamieson12} with the additional strong convexity assumption which shows the generality of our analysis for a large class of dueling feedback based optimization (\acgdco) problems. 
As motivated in our list of contributions, the novelty lies of our analysis lies in the \kgd\, based optimization approach, which smoothly interpolates between different complexity classes of different \dco\, problems based on the degree of the underlying polynomial link function $p$ (see \cref{rem:estpgd}). Besides our method is arguably simpler both in terms of implementation and analysis.

\section{Preliminaries and Problem Setup}
\label{sec:prob}

\textbf{Notation.}  Let $[n] = \{1,2, \ldots n\}$, for any $n \in \N$. 
Given a set $S$, for any two items $x,y \in S$, we denote by $x \succ y$ the event $i$ is preferred over $j$. 
For any $r > 0$, let $\cB_d(r)$ and $\cS_d(r)$ denote the ball and the surface of the sphere of radius $r$ in $d$ dimensions respectively.
%Lower case bold letters denote vectors, upper case bold letters denote matrices.
%For any set $\cX \subseteq \R^d$, volume of $\cX$ is defined as $\textbf{Vol}(\cX): = \int_{\cX} d\x$.
%We use $\bU(\cX)$ to denote the uniform distribution over $\cX$. 
%$\P_{\cX,\|\cdot\|}(\x)$ denotes the nearest point projection of a point $\x \in \R^d$ on to set $\cX \subseteq \R^d$ with respect to norm $\|\cdot\|$, i.e. $\P_\cX(\x):= \arg \min_{\z \in \cX}\|\x - \z\|$.
$\I_d$ denotes the $d \times d$ identity matrix. 
For any vector $\x \in \R^d$, $\|\x\|_2$ denotes the $\ell_2$ norm of vector $\x$. 
%For any matrix $\bA \in \R^{m\times n}$, $\|\bA\|_2$ denotes the frobenius norm of matrix $\bA$.
%$\1(\varphi)$ is used to denote an indicator variable that takes the value $1$ if the predicate $\varphi$ is true, and $0$ otherwise. 
%$\sign(x) = +1$ if $x \ge 0$ or $-1$ otherwise, $\forall x \in \R$. $\text{Unif}(S)$ denotes uniform distribution over any set $S$.

\subsection{Problem setup}
\label{sec:setup}
We consider the problem of minimizing a convex and $\beta$-smooth function $f: \cD \mapsto \R$ defined on a bounded convex domain $\cD \subseteq \R^d$ of Euclidean diameter $D$. We denote by $\w^* \in \arg\min_{\w \in \cD} f(\w)$ a point where $f$ is minimized over $\cD$. 

% \tk{there is the slight issue that we may query points outside of $\cD$?} %\tk{we should probably leave here as is and make a small remark in the algorithm section}

\paragraph{\itshape Query model:}

Our access to the objective $f$ is through a noisy comparison oracle that upon a pair of inputs $(\w,\w') \in \cD^2$ emits a random binary response $o \in \set{\pm 1}$ such that $\E[o \mid \w,\w'] = \rho(f(\w) - f(\w'))$, where $\rho : \mathbb R \to [-1,1]$ is a fixed \emph{transfer function} mapping difference in function values to (signed) preferences, unknown to the algorithm.
For example, given $\rho$ the query model could output a random variable $o$ such that $o \sim \mathrm{Ber}^{\pm}\brk!{\rho(f(\w) - f(\w'))}$ where $\mathrm{Ber}^{\pm}$ denotes a signed version of the Bernoulli distribution (such that for a random variable $X \sim \mathrm{Ber}^{\pm}(p)$, we have $\Pr(X=+1) = 1-\Pr(X=-1) = \frac{p+1}{2}$).

\paragraph{\itshape Transfer function:}

We will assume throughout that the transfer function $\rho: \R \mapsto [-1,1]$ is fixed and unknown to the algorithm.  We make the following assumptions on $\rho$:

\begin{assump}
\label{assump:tf}
(i). $\rho$ is differentiable and anti-symmetric (namely, $\rho(-x) = -\rho(x)$ for all $x$) and satisfies $\rho(0)=0$ and $\sign(\rho(x)) = \sign(x)$ for $x \neq 0$;  
% (2) There is a constant $r>0$ such that $\rho$ is monotonically increasing near origin in the positive orthant, i.e. $\rho'(x) > 0$ for $x \in (0,r]$.  
(ii) there are constants $p \geq 1$ and $r,c_\rho>0$ such that for all $x \in (-r,r)$ it holds that $\rho'(x) \geq c_\rho p |x|^{p-1}$. 
\end{assump}

Following gives the intuition behind the practicability of the above set of assumptions: Let us define the function $\rhoo: \R \mapsto [-1,1]$ such that $\rhoo(x)= c_\rho\sign(x)\abs{x}^p$. Then note $\rho$ satisfies ${\rho'(x)} \geq \rhoo'(x) = c_\rho p |x|^{p-1}$ for all $x \in (0,r]$. This essentially implies that our class of admissible transfer functions ($\rho$) admit a series expansion of the form $\rho(x) = \sum_{n=p}^\infty a_n x^n$ around close neighborhood of $x=0$ with minimal degree $p \geq 1$ (see \cref{lem:taylor} for a formal justification). We will henceforth refer \emph{$\rhoo$ as the `\pprox\,' of $\rho$}. Note that, one can recover the `$\sign$\, feedback' of \cite{SKM21} for $\rho = \rhoo$ with $p=0, c_\rho = 1$. \footnote{It is important to note here $\sign$ transfer function is not differentiable, so \cref{assump:tf}-(ii) become vacuous in this case.}

It is also important to note that our assumptions imply that $\rho$ is monotonically increasing in a small neighborhood of the origin.
While we assume that \emph{$\rho$ is unknown} to the algorithm, we will implicitly assume that the parameters $p,r,c_\rho$ above are known. (This knowledge will be used only for optimally tuning the hyper-parameters of our algorithms.)

% A transfer function is a function $\rho : [-1,1] \to [-1,1]$.

% Let $\rho : [-1,1] \to [-1,1]$ be \emph{any general transfer function} such that 
% $ 
% \begin{cases}
% \rho(x) \geq c_\rho x^p \text{ if } x \geq 0\\
% \rho(x) \leq -c_\rho x^p \text{ if} x < 0
% \end{cases}$, for any $p \in [0,\infty)$, where $c_\rho >0$ is some constant. Note this implies $\abs{\rho(x)} \geq c_\rho \abs{\sign(x)|x|^p}$.
% Note, when $p > 1$, $\rho$ is monotonically increasing in the positive orthant.

% \tk{maybe}: 
% We assume that the transfer function is anti-symmetric (namely, that $\rho(-x) = -\rho(x)$), and in particular, that it satisfies $\rho(0)=0$ and $\sign(\rho(x)) = \sign(x)$ for $x \neq 0$.
% Further, we assume that there is $r>0$ such that for all $x \in [-r,r]$ it holds that $|\rho(x)| \geq c_\rho |x|^p$.

%\tk{need to define $\tilde\rho(x) = c_\rho |x|^p$ before the proofs}

%\red{Objective missing}

\paragraph{\itshape Optimization goal:}
The goal of the optimization process is then, given $\epsilon>0$, to find a point $\w$ such that $f(\w)-f(\w^*)\leq \epsilon$ while minimizing the number of queries to to the comparison oracle.

\subsection{Admissible Transfer Functions}

Our latter assumption on the transfer function $\rho$ is perhaps the most stringent one; however, it is satisfied by a wide variety of natural transfer functions: those that admit a series expansion about the origin with uniformly bounded coefficients.
%

% \begin{lem} \label{lem:taylor-old}
% Let $\rho$ be a differentiable function such that $\rho'$ admits a series expansion $\rho'(x) = \sum_{n=p}^\infty a_n x^n$ about $x=0$ with minimal degree $p \geq 0$ that converges in the interval $(-\delta,\delta)$ for $0<\delta<1/2$.
% Then, if $a_p > 0$ and $\abs{a_n} \leq M$ for all $n>p$, we have that \tk{updated this lemma to argue about $\rho'$ rather than $\rho$ -- please verify}
% \begin{align*}
%     \abs{\rho'(x)} \geq \tfrac12 a_p \abs{x}^p
%     \quad\text{for}\quad
%     \abs{x} < \min\set2{\delta, \frac{a_p}{2M}}
%     .
% \end{align*}
% \end{lem}

\begin{lem} \label{lem:taylor}
Let $\rho$ admit a series expansion $\rho(x) = \sum_{n=p}^\infty a_n x^n$ about $x=0$ with minimal degree $p \geq 1$ and radius of convergence $\delta>0$.
% that converges in the interval $(-\delta,\delta)$ for $0<\delta<1/2$.
Then, if $a_p > 0$ and $\abs{n a_n} \leq M$ for all $n>p$, we have that
\begin{align*}
    \abs{\rho'(x)} \geq \tfrac12 p a_p \abs{x}^{p-1}
    \quad\text{for}\quad
    \abs{x} < \min\set2{\delta, \frac{p a_p}{4M}}
    .
\end{align*}
\end{lem}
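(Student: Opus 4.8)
The plan is to estimate the differentiated power series directly. Since a power series may be differentiated term by term within its radius of convergence, for $\abs{x}<\delta$ we have $\rho'(x)=\sum_{n=p}^\infty n a_n x^{n-1}$, and I would split off the lowest-order term,
\begin{align*}
\rho'(x) = p a_p x^{p-1} + R(x), \qquad R(x) := \sum_{n=p+1}^\infty n a_n x^{n-1} ,
\end{align*}
where the ``tail'' $R$ is again a power series with the same radius of convergence $\delta$. By the reverse triangle inequality together with $a_p>0$,
\begin{align*}
\abs{\rho'(x)} \;\geq\; \abs{p a_p x^{p-1}} - \abs{R(x)} \;=\; p a_p \abs{x}^{p-1} - \abs{R(x)} ,
\end{align*}
so the entire statement reduces to proving $\abs{R(x)} \leq \tfrac12 p a_p \abs{x}^{p-1}$ on the prescribed range of $x$.

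For the tail I would use the coefficient bound $\abs{n a_n}\le M$ (valid for $n>p$) together with a geometric sum: for $\abs{x}<1$,
\begin{align*}
\abs{R(x)} \;\leq\; \sum_{n=p+1}^\infty \abs{n a_n}\,\abs{x}^{n-1} \;\leq\; M\sum_{n=p+1}^\infty \abs{x}^{n-1} \;=\; \frac{M\abs{x}^{p}}{1-\abs{x}} \;=\; \frac{M\abs{x}}{1-\abs{x}}\cdot\abs{x}^{p-1} .
\end{align*}
Thus it suffices that $\frac{M\abs{x}}{1-\abs{x}} \leq \tfrac12 p a_p$, and a short rearrangement shows this is implied once $\abs{x} < \min\{\delta,\tfrac{p a_p}{4M}\}$: the bound $\abs{x}<\tfrac{p a_p}{4M}$ controls the numerator $M\abs{x}$, and the constant $4$ (rather than $2$) leaves the slack needed to absorb the factor $1/(1-\abs{x})$ while keeping $\abs{x}$ away from $1$. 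Substituting this into the previous display yields $\abs{\rho'(x)} \geq p a_p\abs{x}^{p-1} - \tfrac12 p a_p\abs{x}^{p-1} = \tfrac12 p a_p\abs{x}^{p-1}$, as claimed.

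The only substantive step is the tail estimate, so that is where I would be most careful: one must justify the term-by-term differentiation (and that $R$ converges on the same disc as $\rho$), and then check that the prescribed threshold genuinely forces $\frac{M\abs{x}}{1-\abs{x}}\le\tfrac12 p a_p$ — this is the one point at which the precise numerical constant matters. It is also worth observing that the hypothesis $\abs{n a_n}\le M$ already forces $\delta\ge 1$ (since then $\abs{a_n}^{1/n}\le(M/n)^{1/n}\to 1$), so the intersection with $\delta$ in the $\min$ only bites when $p a_p$ is large relative to $M$. Everything else is the reverse triangle inequality and elementary algebra.
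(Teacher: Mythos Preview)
Your argument is essentially the paper's: differentiate term by term, write $\rho'(x)=p a_p x^{p-1}+R(x)$, bound $R$ via the geometric series $\sum_{n>p}|na_n|\,|x|^{n-1}\le \frac{M|x|^p}{1-|x|}$, and finish with the reverse triangle inequality.

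One caution on the step you yourself flag as delicate: the inequality $\frac{M|x|}{1-|x|}\le\tfrac12 p a_p$ does \emph{not} follow from $|x|<\tfrac{p a_p}{4M}$ alone once $p a_p>2M$ (take $p a_p=4M$ and $|x|=0.9$, giving $\frac{M|x|}{1-|x|}=9M>2M=\tfrac12 p a_p$), so your ``short rearrangement'' needs an additional constraint keeping $|x|$ bounded away from $1$. The paper handles this by writing ``for $|x|<\delta\le\tfrac12$'' and using $\tfrac{1}{1-|x|}\le 2$ to get $|R(x)|\le 2M|x|^p$, after which $|x|\le\tfrac{p a_p}{4M}$ immediately yields $|R(x)|\le\tfrac12 p a_p|x|^{p-1}$. (Strictly speaking this leaves the same loose end when the stated threshold exceeds $\tfrac12$, so the issue is shared; the fix is cosmetic---add $\tfrac12$ to the $\min$.) Your side remark that $|na_n|\le M$ forces $\delta\ge 1$ is correct and does not appear in the paper.
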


Note that since we require $\rho(0)=0$, it must be that $a_0=0$ and the assumption $p\geq 1$ holds naturally. 
Further, since we would like $\rho(x)>0$ to hold for $x>0$, the first nonzero coefficient must be positive, namely $a_p>0$.
Thus, the only non-trivial assumption is that the series coefficients are uniformly bounded; however, this condition holds for many natural transfer functions: e.g., for the sigmoidal $\arctan(x)$, hyperbolic tangent $\tanh(x)$ and for the error function $\mathrm{erf}(x)$, it holds simply with $M=1$.

% \begin{proof}[Proof of \cref{lem:taylor}]
% Write $\rho'(x) = a_p x^p + R(x)$, where $R(x) = \sum_{n>p} a_n x^n$.
% Then, for $\abs{x} < \delta \leq \tfrac12$,
% \begin{align*}
%     \abs{R(x)} 
%     \leq
%     \sum_{n>p} \abs{a_n} \, \abs{x}^n
%     \leq
%     M \abs{x}^{p+1} \sum_{n=0}^\infty \abs{x}^n
%     =
%     \abs{x}^{p} \frac{M \abs{x}}{1-\abs{x}}
%     \leq
%     \abs{x}^{p} \cdot 2M \abs{x}
%     .
% \end{align*}
% Thus, when $\abs{x} \leq a_p/4M$ we have $\abs{R(x)} \leq \tfrac12 a_p \abs{x}^p$.
% It follows that $\abs{\rho'(x)} \geq a_p \abs{x}^p - \abs{R(x)} \geq \tfrac12 a_p \abs{x}^p$ as claimed.
% \end{proof}

\begin{proof}[Proof of \cref{lem:taylor}]
On the interval of convergence $(-\delta,\delta)$ we have $\rho'(x) = \sum_{n=p}^\infty n a_n x^{n-1}$ as one can exchange the order of summation and differentiation. 
Let us write $\rho'(x) = p a_p x^{p-1} + R(x)$, where $R(x) = \sum_{n>p} n a_n x^{n-1}$.
Then, for $\abs{x} < \delta \leq \tfrac12$,
\begin{align*}
    \abs{R(x)} 
    \leq
    \sum_{n>p} \abs{n a_n} \, \abs{x}^{n-1}
    \leq
    M \abs{x}^{p} \sum_{n=0}^\infty \abs{x}^n
    =
    M \abs{x}^{p} \frac{1}{1-\abs{x}}
    \leq
    2M \abs{x}^{p}
    .
\end{align*}
Thus, when $\abs{x} \leq p a_p/4M$ we have $\abs{R(x)} \leq \tfrac12 p a_p \abs{x}^{p-1}$.
It follows that $\abs{\rho'(x)} \geq p a_p \abs{x}^{p-1} - \abs{R(x)} \geq \tfrac12 p a_p \abs{x}^{p-1}$ as claimed.
\end{proof}

\iffalse%%%%%%%%%%%%%%%%%%%%%%%%%55
\subsection{Additional notation}  

\tk{not required? I suggest to omit}

We let $[n] = \{1,2, \ldots n\}$, for any $n \in \N$. 
Given a set $S$, for any two items $x,y \in S$, we denote by $x \succ y$ the event $i$ is preferred over $j$. 
For any $r > 0$, let $\cB_d(r)$ and $\cS_d(r)$ denote the ball and the surface of the sphere of radius $r$ in $d$ dimensions respectively.
%Lower case bold letters denote vectors, upper case bold letters denote matrices.
%For any set $\cX \subseteq \R^d$, volume of $\cX$ is defined as $\textbf{Vol}(\cX): = \int_{\cX} d\x$.
%We use $\bU(\cX)$ to denote the uniform distribution over $\cX$. 
%$\P_{\cX,\|\cdot\|}(\x)$ denotes the nearest point projection of a point $\x \in \R^d$ on to set $\cX \subseteq \R^d$ with respect to norm $\|\cdot\|$, i.e. $\P_\cX(\x):= \arg \min_{\z \in \cX}\|\x - \z\|$.
$\I_d$ denotes the $d \times d$ identity matrix. 
For any vector $\x \in \R^d$, $\|\x\|_2$ denotes the $\ell_2$ norm of vector $\x$. 
%For any matrix $\bA \in \R^{m\times n}$, $\|\bA\|_2$ denotes the frobenius norm of matrix $\bA$.
$\1(\varphi)$ is used to denote an indicator variable that takes the value $1$ if the predicate $\varphi$ is true, and $0$ otherwise. 
$\sign(x) = +1$ if $x \ge 0$ or $-1$ otherwise, $\forall x \in \R$. $\text{Unif}(S)$ denotes uniform distribution over any set $S$.
\fi %%%%%%%%%%%%%%%%%%%%%%%%%%%%%%%%

\section{Dueling Convex Optimization with General Transfer Functions}
% \section{\kgd: An Unified Framework for Dueling Convex Optimization for any General Transfer Function}
\label{sec:algo}

%Our algorithm tries to generalize the \red{todo}

In this section we propose an optimization algorithm for our problem (see Objective in \cref{sec:prob}) for any convex and $\beta$-smooth $f$ ($\beta>0$). Note the primary difficulty towards designing an efficient algorithm for the purpose lies in the fact that we can not hope to estimate the gradient of $f$ for any general dueling/pairwise preference model (i.e. any general $\rho$). Thus we can not apply the standard \emph{gradient descent} based techniques to address this problem \citep{boydbook,bubeckbook,hazanbook}.

% \textbf{Main Idea: Estimating \pgd. } 
We get around with the difficulty by noting that, though one may not be able to estimate the exact gradient of $f$, $\nabla f(\w)$, at a given point of interest $\w \in \cD$, we can hope to estimate a `\pprox\, of $\nabla f(\w)$', called \pgd\, of $f$ at $\w$, from the $1$-bit preference feedback generated according to the transfer function (or pairwise preference model) $\rho$. 
The following definition and the lemma describes a more formal argument on this.

\begin{defn}[\pgd]
\label{def:pgd}
Given any function $f:\R^d \mapsto \R$, we define the \pgd\, of $f$ at any point $\w \in \R^d$ to be $\nabla f(\w) \norm{\nabla f(\w)}^{p-1}$ for any $p \ge 1$.
\end{defn}

%Clearly, when $p=1$, \pgd\, recovers the usual gradient of $f$ at any given point $\w$ \citep{Flaxman+04}; for $p=0$ it recovers the normalized gradient (direction of the gradient) \citep{SKM21}. Note for any general $p \geq 1$, \pgd\, always reflects the direction of the gradient but magnitude-wise represents the $p$-order magnitude of that of the true gradient $\norm{\nabla f(\w)}$.

\cref{lem:estpgd}, in \cref{app:sec3}, gives a formal justification of the key characteristics of \pgd\, estimate. \cref{rem:estpgd} gives a more intuitive explanation of the same and how we exploited it in our optimization algorithm (\cref{alg:pgd}).

\begin{rem}[Key idea behind \cref{alg:pgd}: How it estimates a descent direction in terms of \pgd?] 
\label{rem:estpgd}
As shown in \cref{lem:estpgd} (\cref{app:sec3}), the expected value of our $\g_t$ estimate in \cref{alg:pgd} $(\E_{\u_t,o_t}[\g_t])$, captures the estimated \pgd \, (upto constant factors): It  reflects the direction of the gradient $\nabla f(\w)$ (in expectation) but magnitudewise represents the $p$-order magnitude of that of the true gradient $\norm{\nabla f(\w)}$. Thus $-\g_t$ represents a valid descent direction in expectation, since it points to the negative direction of the gradient (modulo its magnitude is now skewed by the degree $p$).

It is important to note that \pgd\, at any point $\w$ is a power generalization of `gradient feedback' at $\w$, $\nabla f(\w)$, which can automatically smoothly interpolate between different scaling orders of descent directions depending on the `expressiveness' of the transfer function $\rho$ (captured through $p$). Clearly, the best case is attained for $p=1$, when our feedback model is equivalent to the zeroth-order or bandit convex optimization feedback model \citep{Flaxman+04}, when \pgd\, exactly boils down to the gradient  estimate $\nabla f(\w)$. 
Moreover, note if $p=0$, our feedback model recovers the $\sign$-feedback model of \cite{SKM21} and in this case our gradient estimate also $\E[\g_t]$ roughly captures the normalized gradient $\frac{\nabla f(\w)}{\norm{\nabla f(\w)}}$ (direction of the gradient at $\w$) on expectation, as used in \cite{SKM21} as well. 
\end{rem}

\subsection{Algorithm Design: \kgd} 

The crux of the idea lies in designing \pgd\, based algorithm (\cref{alg:pgd}), which is a generalized notion of gradient descent based optimization technique: The algorithm proceeds sequentially, where at each step $t$, it maintains a current point of interest $\w_t \in \cD$, estimate the \pgd\, of $f$ at point $\w_t$ using dueling feedback (as indicated in \cref{lem:estpgd}), and take a `carefully chosen small' step in the negative direction of the estimated \pgd\, to reach the updated point of interest $\w_{t+1}$.  

More formally, the algorithm starts from an initial point $\w_1 \in \cD$. Now at any round $t = 1,2, \ldots$,  the algorithm queries the dueling feedback on a pair of points $(\w_t + \gamma\u_t,\w_t - \gamma\u_t)$, such that $\u_t \sim \text{Unif}(\cS_d(1))$ is any random unit norm $d$-dimensional vector, $\gamma$ being a carefully tuned perturbation parameter.
 Upon receiving the $1$-bit preference feedback $o_t \in \{\pm1\}$, it finds a \pgd\, estimate of $f$ at $\w_t$ as $\g_t := o_t\u_t$ which gives a valid descent direction on expectation as shown in \cref{lem:estpgd} (see \cref{rem:estpgd} for more insights). It then takes an $\eta$-sized step along the negative direction of $\g_t$ to obtain the next iterate $\w_{t+1}:= \w_t - \eta \g_t$ (with suitable projection if necessary). 
The details of the algorithm is presented in \cref{alg:pgd}.
 
\cref{thm:pgd} analyses its convergence guarantees which shows that upon iterating through the above steps for at most $O(\epsilon^{-4p})$ rounds, the algorithm should be able to find a desired $\epsilon$-optimal point.
\vspace{-10pt}
\begin{center}
	\begin{algorithm}[h]
		\caption{\kgd } %({$\epsilon $-\objpac ~with \emph{Sign-Feedback}})
		\label{alg:pgd}
		\begin{algorithmic}[1] 
			\STATE {\bfseries Input:} Initial point: $\w_1 \in \cD$, Learning rate $\eta$, Perturbation parameter $ \gamma$, Query budget $T$ %\tk{why not call this simply $T$}
% 			\\(Recall the desired error tolerance is $\epsilon > 0$) %
			%\STATE {\bf Initialize} Current minimum $\tilde \w_1 \in \R^d$
			\FOR{$t = 1,2,3,\ldots, T$}
			\STATE Sample $\u_t \sim \text{Unif}(\cS_d(1))$ %\text{unif}\{\e_1,\ldots,\e_d\}
			\STATE Set  $\x_{t}' := \w_t +  \gamma \u_t$,~ 
			            $\y_{t}' := \w_t -  \gamma \u_t$
			\STATE Play the duel $(\x_{t}',\y_t')$, and observe $o_t \in {\pm 1}$ such that $o_t \sim \mathrm{Ber}^{\pm}\big( \rho\big( f(\x_{t}') - f(\y_{t}') \big) \big)$. %\tk{we can be more general - we don't need it to be Bernoulli, we just need the expectation to be correct}
			%\STATE Set $o_t' = 2o_t -1$.
			\STATE Update $\tw_{t+1} \leftarrow \w_t - \eta \g_t$, 
			where $\g_t = o_t \u_t$
			\STATE Project $\w_{t+1} = \arg\min_{\w \in \cD}\norm{\w - \tw_{t+1}}$
			\ENDFOR   
			%\STATE Return $\tilde \w_{T+1}$
		\end{algorithmic}
	\end{algorithm}
\end{center}
\vspace{-10pt}

%\tk{the lemma above is more confusing than helpful - I would remove it and just explain the idea in words} \tk{let's try to do that already in the introduction}

Since our proposed \cref{alg:pgd} is based on an iterative `\pgd-descent' based approach (\cref{rem:estpgd}), the interesting part in it's convergence analysis was indeed to understand how this can be exploited to gradually descent towards the true minimizer $\w^*$ and reach an $\epsilon$-optimal point with small enough query complexity. The details are explained more mathematically in the proof of \cref{thm:pgd}.

\subsection{Convergence Analysis for Smoothly Convex Functions}
\label{sec:analysis_smooth}

\begin{restatable}%[Convergence Analysis for Smoothly Convex Functions]
{thm}{thmpgd} \label{thm:pgd}
Consider a dueling feedback optimization problem parameterized  by any general admissible transfer function $\rho$ with \pprox\, $\rhoo$ and a $\beta$ smooth convex function $f: \cD \mapsto \R$. 
%Also let in Alg. \ref{algo:grad} the initial point $\w_1$ is such that $f(\w_1) - f(\w^*) > \epsilon$. 
Then given any $\epsilon > 0$, for the choice of $\gamma = \frac{\tc \epsilon}{\beta \sqrt{d}\holdon}$ and $\eta = \frac{pc_\rho \tc^{2p-1} \epsilon^{2p}}{d^{(2p+1)/2}\beta^p \holdon^{2p-1}}$, there exists at least one $t$ such that $\E[f(\w_{t})] - f(\w^*) \le \epsilon$, after at most $T = \frac{d^{2p+1}\beta^{2p} \holdon^{4p}}{p^2 (\tc^{2p-1} c_\rho)^2\epsilon^{4p}}+1$ iterations;  i.e. $\min_{t \in [T]}\E[f(\w_{t})] - f(\w^*) \le \epsilon$, where $\tc  = \tfrac{1}{20}$ 
%$\tc \in [\tfrac{1}{20},1]$
is a universal constant.% as indicated in \cref{lem:normgrad}. 
\end{restatable}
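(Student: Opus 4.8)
## Proof Plan for Theorem~\ref{thm:pgd}

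The plan is to follow the standard one-point (spherical) gradient estimation analysis, adapted to the fact that the feedback oracle gives us the \emph{$p$-th order proxy} rather than the gradient itself. First I would set up the smoothed objective $\hat f(\w) = \E_{\v \sim \text{Unif}(\cB_d(\gamma))}[f(\w+\v)]$, which is $\beta$-smooth and convex, satisfies $|\hat f(\w)-f(\w)| \le \tfrac12 \beta \gamma^2$ uniformly on $\cD$, and whose gradient admits the classical Stokes-type identity $\nabla \hat f(\w) = \tfrac{d}{\gamma}\E_{\u\sim\text{Unif}(\cS_d(1))}[f(\w+\gamma\u)\,\u]$. The key departure from the vanilla bandit-convex-optimization argument is computing $\E_{\u_t,o_t}[\g_t] = \E_{\u_t}[\rho(f(\x_t')-f(\y_t'))\,\u_t]$. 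Using anti-symmetry of $\rho$ and the fact that $\u_t$ and $-\u_t$ are identically distributed, together with Assumption~\ref{assump:tf}(ii) (which forces $\rho'(x)\ge c_\rho p|x|^{p-1}$ near the origin, hence $\rho(x) \ge c_\rho \sign(x)|x|^p = \rhoo(x)$ for small $x$), I would show that $\langle \E[\g_t], \nabla \hat f(\w_t)\rangle$ is lower bounded by something proportional to $c_\rho \gamma^{p-1} \norm{\nabla f(\w_t)}^{p+1}/d^{(p-1)/2}$, after controlling the argument $f(\x_t')-f(\y_t') = \langle \nabla f(\w_t), 2\gamma\u_t\rangle + O(\beta\gamma^2)$ and ensuring $\gamma$ is small enough that this difference stays inside the interval $(-r,r)$ where the lower bound on $\rho'$ is valid. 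This is essentially the content of Lemma~\ref{lem:estpgd}, which I am permitted to invoke; the role of the universal constant $\tc = 1/20$ is to absorb the lower-order smoothness error terms and the geometric constant $\E[|\langle \u, \e_1\rangle|] = \Theta(1/\sqrt d)$.

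Next I would run the descent recursion on the iterates. Since $\w_{t+1}$ is the Euclidean projection of $\tw_{t+1} = \w_t - \eta\g_t$ onto $\cD$ and $\w^*\in\cD$, projection is non-expansive and $\norm{\w_{t+1}-\w^*}^2 \le \norm{\w_t - \eta\g_t - \w^*}^2 = \norm{\w_t-\w^*}^2 - 2\eta\langle \g_t, \w_t-\w^*\rangle + \eta^2\norm{\g_t}^2$. Taking conditional expectation over $(\u_t,o_t)$ and using $\norm{\g_t} = \norm{o_t\u_t} = 1$, I get $\E\norm{\w_{t+1}-\w^*}^2 \le \E\norm{\w_t-\w^*}^2 - 2\eta\langle \E\g_t, \w_t-\w^*\rangle + \eta^2$. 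By convexity of $\hat f$ and the lower bound on $\langle \E\g_t,\nabla\hat f(\w_t)\rangle$ derived above, the cross term $\langle \E\g_t, \w_t-\w^*\rangle$ is bounded below by a positive multiple of $(\hat f(\w_t)-\hat f(\w^*))$ — more precisely, after writing $\E\g_t = \kappa_t \nabla\hat f(\w_t) + (\text{error})$ with $\kappa_t \asymp c_\rho \gamma^{p-1}\norm{\nabla\hat f(\w_t)}^{p-1}/d^{(p-1)/2}$, and using smoothness to relate $\norm{\nabla\hat f(\w_t)}$ to the optimality gap. Telescoping over $t=1,\dots,T$ and dividing by $T$ yields $\min_{t\in[T]} \big(\E\hat f(\w_t)-\hat f(\w^*)\big) \le \frac{\norm{\w_1-\w^*}^2}{2\eta\,c\,T} + \frac{\eta}{2c}$ for the appropriate effective step-size factor $c$, and finally $\min_t \E f(\w_t) - f(\w^*) \le \min_t \E\hat f(\w_t) - \hat f(\w^*) + \beta\gamma^2$ by the smoothing approximation.

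The last step is bookkeeping: substitute $\gamma = \tc\epsilon/(\beta\sqrt d D)$ so that the smoothing error $\beta\gamma^2$ and the discretization/gradient-scale errors are each at most a constant fraction of $\epsilon$; then substitute the prescribed $\eta = pc_\rho\tc^{2p-1}\epsilon^{2p}/(d^{(2p+1)/2}\beta^p D^{2p-1})$, which is exactly the value balancing the two terms $\frac{D^2}{2\eta cT}$ and $\frac{\eta}{2c}$ (after accounting for the $\gamma^{p-1}\norm{\nabla\hat f}^{p-1}$ factor inside $\kappa_t$, where $\norm{\nabla\hat f(\w_t)} \le \beta D$ gives the worst case); solving the resulting inequality $\frac{1}{\text{poly}}\cdot\frac{D^2}{\eta T} \le \epsilon$ for $T$ produces exactly $T = d^{2p+1}\beta^{2p}D^{4p}/(p^2(\tc^{2p-1}c_\rho)^2\epsilon^{4p}) + 1$.

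The main obstacle I anticipate is controlling the term $\langle \E\g_t, \w_t - \w^*\rangle$ when $\norm{\nabla f(\w_t)}$ is small: the proxy scaling $\norm{\nabla\hat f(\w_t)}^{p-1}$ degrades the effective step when the gradient is small, so one cannot directly run the textbook argument that lower-bounds the cross term by the optimality gap uniformly. The fix is to argue dichotomously — either $\norm{\nabla f(\w_t)}$ is small, in which case $\beta$-smoothness and convexity already force $f(\w_t)-f(\w^*)$ to be $O(\norm{\nabla f(\w_t)}\cdot D)$ and hence small, so such a $t$ already certifies $\epsilon$-optimality; or $\norm{\nabla f(\w_t)}$ is bounded below by $\Omega(\epsilon/D)$ on all iterates, in which case $\kappa_t$ is bounded below and the telescoping argument goes through with the stated rate. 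Making this dichotomy quantitatively consistent with the stated $\gamma,\eta,T$ — in particular checking that the threshold on $\norm{\nabla f}$ is compatible with $\gamma$ keeping the oracle argument inside $(-r,r)$ — is the delicate part of the calculation; everything else is routine smoothed-bandit-gradient-descent algebra.
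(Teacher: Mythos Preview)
Your plan is correct and matches the paper's strategy at the structural level: the projected-descent recursion $\E_t\norm{\w_{t+1}-\w^*}^2 \le \norm{\w_t-\w^*}^2 - 2\eta\,\E_t\langle\g_t,\w_t-\w^*\rangle + \eta^2$, combined with the dichotomy you describe (either $f(\w_t)-f(\w^*)\le\epsilon$ and we stop, or $\norm{\nabla f(\w_t)}\ge\epsilon/D$ and a fixed per-step decrease is guaranteed), is exactly how the paper proceeds via \cref{lem:pgd}. The paper then argues by contradiction that after $T$ steps the squared distance would become nonpositive unless some iterate was already $\epsilon$-optimal, which is equivalent to your telescoping-and-minimum argument.

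The one genuine difference is in how the cross term $\langle\E_t\g_t,\w_t-\w^*\rangle$ is controlled. You route through the smoothed surrogate $\hat f$, aiming first at $\langle\E_t\g_t,\nabla\hat f(\w_t)\rangle$ and then writing $\E_t\g_t=\kappa_t\nabla\hat f(\w_t)+(\text{error})$ before invoking convexity of $\hat f$. That can be made to work, but the error term needs its own bound and the intermediate inner product with $\nabla\hat f(\w_t)$ is not the quantity you ultimately need. The paper skips all of this: after applying \cref{lem:fkm} to get $\E_t\g_t=\frac{\gamma}{d}\E_{\u}\!\big[\rho'(\cdot)\big(\nabla f(\w_t+\gamma\u)+\nabla f(\w_t-\gamma\u)\big)\big]$, it observes that $\langle\nabla f(\w_t\pm\gamma\u),\w_t-\w^*\rangle\ge f(\w_t)-f(\w^*)-\beta\gamma^2$ holds \emph{pointwise in $\u$} by convexity and smoothness. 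Since $\rho'(\cdot)\ge 0$, that pointwise bound lets the weight factor out cleanly, and no smoothed function or decomposition-plus-error step is needed. Your route buys a more familiar ``bandit-smoothing'' narrative; the paper's route buys a shorter and tighter calculation with fewer moving parts.
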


% \begin{rem}[Universality of \kgd\, for any general transfer functions.]
% \label{rem:pgd_interpolate}
\cref{thm:pgd} shows that for any general transfer function $\rho$ with a $p$-th degree \pprox\, $\rhoo$, \cref{alg:pgd} gives a convergence rate of $O(\epsilon^{-4p})$ to find an $\epsilon$-optimal point. However, \cref{thm:pgdi} shows a improved convergence rate of $O(\epsilon^{-3})$ for linear $(p=1)$ transfer functions which recovers the convergence rate obtained in \cite{sahatewari} for smooth convex functions in the \emph{Bandit Convex Optimization} ($1$ point feedback setting). Moreover, \cref{thm:pgd} also shows how \cref{alg:pgd} can yield a faster convergence rate of $O(\epsilon^{-1})$ for $\sign$ transfer functions $(p=0)$ which matches the convergence rate obtained in \cite{SKM21} --- in fact, not just the final convergence rate, our algorithm (\kgd, \cref{alg:pgd}) generalizes the $\beta$-NGD algorithm of \cite{SKM21} since our descent direction estimate $(\g_t)$, exactly behaves like the normalized gradient (gradient direction) at point $\w_t$ which was the crux of their optimization analysis. Please see the proof of \cref{thm:pgdi} for more details.
% \end{rem}

\begin{proof}[Proof of \cref{thm:pgd} (sketch)]
The complete details of the proofs can be found in \cref{app:pgd}. 
We denote by $\cH_t$ the history $\{\w_\tau,\u_\tau,o_\tau\}_{\tau = 1}^{t-1} \cup \w_{t}$ till time $t$.
We start by noting that by definition: 
\[
\E_{o_t}[\g_t \mid \cH_t,\u_t] = \rho(f(\w_t+\gamma \u_t) - f(\w_t - \gamma \u_t))\u_t
\]

\textbf{Base Case:} Let us start with the assumption that $f(\w_1)-f(\w^*) > \epsilon$ (as otherwise we already have $\min_{t \in [T]}\E[f(\w_{t})] - f(\w^*) \le \epsilon$ and there is nothing to prove).

We proceed with the proof inductively, i.e. given $\cH_t$ and assuming (conditioning on) $f(\w_t) - f(\w^*) > \epsilon$, we can show that $\w_{t+1}$ always come closer to the minimum $\w^*$ on expectation in terms of the $\ell_2$-norm. More formally, given $\cH_t$ and assuming  $f(\w_t) - f(\w^*) > \epsilon$ we will show:
$
\E_t[\norm{\w_{t+1}-\w^*}^2] \leq \norm{\w_{t}-\w^*}^2,
$ %but this might not imply f(\w_{t+1})-f(\w^*) < const  f(\w_{t})-f(\w^*)  which is true in the strongly convex case though...
where $\E_t[\cdot]:= \E_{o_t,\u_t}[\cdot \mid \cH_t]$ denote the expectation with respect to $\u_t,o_t$ given $\cH_t$. The precise statement can be summarized in the following lemma:

\begin{restatable}[Roundwise Progress of \kgd]{lem}{lempgd}
\label{lem:pgd}
Consider the problem setup of \cref{thm:pgd} and also the choice of $\eta$, $\gamma$. 
Then at any time $t$, during the run of \kgd~(\cref{alg:pgd}), given $\cH_t$, if $f(\w_t) - f(\w^*) > \epsilon$, we can show that:
\begin{align}
\label{eq:fin1orig}
    \E_t[\norm{\w_{t+1}-\w^*}^2] \leq \norm{\w_{t}-\w^*}^2 - \frac{p^2 (\tc^{2p-1} c_\rho)^2\epsilon^{4p}}{d^{2p+1}\beta^{2p} \holdon^{4p-2}}
\end{align}
\end{restatable}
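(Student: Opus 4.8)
The plan is to view \cref{alg:pgd} as a stochastic projected-descent method whose step direction $\g_t=o_t\u_t$ has conditional mean pointing along the \pgd\, of $f$ at $\w_t$, and to establish the stated one-step decrease of $\norm{\w_t-\w^*}^2$. Since $\cD$ is convex and contains $\w^*$, the projection step is non-expansive, so $\norm{\w_{t+1}-\w^*}\le\norm{\tw_{t+1}-\w^*}$; expanding the square and using $\norm{\g_t}^2=o_t^2\norm{\u_t}^2=1$,
\[
 \norm{\w_{t+1}-\w^*}^2 \le \norm{\w_t-\w^*}^2 - 2\eta\,\langle\g_t,\w_t-\w^*\rangle + \eta^2 .
\]
Applying $\E_t[\cdot]:=\E_{\u_t,o_t}[\cdot\mid\cH_t]$ together with the identity $\E_{o_t}[\g_t\mid\cH_t,\u_t]=\rho(\Delta_t)\u_t$, where $\Delta_t:=f(\w_t+\gamma\u_t)-f(\w_t-\gamma\u_t)$, everything comes down to lower-bounding the scalar
\[
 A_t := \E_{\u_t}[\rho(\Delta_t)\,\langle\u_t,\w_t-\w^*\rangle] .
\]
Once $A_t\ge\eta$ is known we get $\E_t\norm{\w_{t+1}-\w^*}^2\le\norm{\w_t-\w^*}^2-2\eta A_t+\eta^2\le\norm{\w_t-\w^*}^2-\eta^2$, and a direct substitution of the prescribed $\gamma,\eta$ shows that $\eta^2$ is exactly the correction term appearing in \eqref{eq:fin1orig}; so the whole lemma reduces to proving $A_t\ge\eta$.

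The lower bound on $A_t$ is precisely what \cref{lem:estpgd} supplies, and I would prove it as follows. By $\beta$-smoothness, $\Delta_t=2\gamma\,\langle\nabla f(\w_t),\u_t\rangle+\xi_t$ with $\abs{\xi_t}\le\beta\gamma^2$, and since $\norm{\nabla f}$ is bounded on $\cD$ while $\gamma$ is of order $\epsilon/(\beta\sqrt d\,\holdon)$, we have $\abs{\Delta_t}<r$ throughout (for $\epsilon$ below a fixed threshold depending only on $r,c_\rho,\beta,\holdon$), so \cref{assump:tf}(ii) is in force on the whole range of $\Delta_t$; integrating it gives $\sign(x)\rho(x)\ge c_\rho\abs{x}^p$ there. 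Write $\bar\Delta_t:=2\gamma\,\langle\nabla f(\w_t),\u_t\rangle$ and split $A_t=\E_{\u_t}[\rho(\bar\Delta_t)\langle\u_t,\w_t-\w^*\rangle]+\E_{\u_t}[(\rho(\Delta_t)-\rho(\bar\Delta_t))\langle\u_t,\w_t-\w^*\rangle]$. In the first (signal) term, $\rho(\bar\Delta_t)$ is an odd function of $\langle\nabla f(\w_t),\u_t\rangle$ alone, so by the reflection symmetry of $\text{Unif}(\cS_d(1))$ the part of $\w_t-\w^*$ orthogonal to $\nabla f(\w_t)$ contributes nothing, leaving
\[
 \E_{\u_t}[\rho(\bar\Delta_t)\langle\u_t,\w_t-\w^*\rangle] = \frac{\langle\nabla f(\w_t),\w_t-\w^*\rangle}{\norm{\nabla f(\w_t)}^2}\,\E_{\u_t}[\rho(\bar\Delta_t)\,\langle\nabla f(\w_t),\u_t\rangle] .
\]
Using $\sign(x)\rho(x)\ge c_\rho\abs{x}^p$ gives $\E_{\u_t}[\rho(\bar\Delta_t)\langle\nabla f(\w_t),\u_t\rangle]\ge c_\rho(2\gamma)^p\norm{\nabla f(\w_t)}^{p+1}\,\E[\abs{u_1}^{p+1}]$, where $u_1$ is one coordinate of a uniform unit vector and $\E[\abs{u_1}^{p+1}]=\Theta(d^{-(p+1)/2})$; hence the signal term is $\gtrsim c_\rho\,\gamma^p\,\norm{\nabla f(\w_t)}^{p-1}\,\langle\nabla f(\w_t),\w_t-\w^*\rangle\,d^{-(p+1)/2}$, with an extra factor of $p$ available from \cref{assump:tf}(ii). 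The second (perturbation) term is a strictly lower-order correction: for typical $\u_t$ the signal $\asymp\gamma\norm{\nabla f(\w_t)}/\sqrt d$ dominates $\abs{\xi_t}\le\beta\gamma^2$ exactly because $\gamma\asymp\epsilon/(\beta\sqrt d\,\holdon)$, whereas the thin equatorial band $\{\abs{\langle\nabla f(\w_t),\u_t\rangle}\lesssim\beta\gamma\}$ --- on which $\xi_t$ can dominate and even flip $\sign(\Delta_t)$ --- has probability $O(\beta\gamma\sqrt d/\norm{\nabla f(\w_t)})$ and contributes negligibly. Collecting, $A_t\ge c'\,c_\rho\,\gamma^p\,\norm{\nabla f(\w_t)}^{p-1}\,\langle\nabla f(\w_t),\w_t-\w^*\rangle\,d^{-(p+1)/2}$ for an absolute constant $c'$.

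Only at the last step does the hypothesis $f(\w_t)-f(\w^*)>\epsilon$ enter, and only through convexity used twice: $\langle\nabla f(\w_t),\w_t-\w^*\rangle\ge f(\w_t)-f(\w^*)>\epsilon$, and $\norm{\nabla f(\w_t)}\,\holdon\ge\norm{\nabla f(\w_t)}\,\norm{\w_t-\w^*}\ge\langle\nabla f(\w_t),\w_t-\w^*\rangle>\epsilon$, whence $\norm{\nabla f(\w_t)}^{p-1}\ge(\epsilon/\holdon)^{p-1}$ since $p\ge1$. Substituting these and $\gamma=\tc\epsilon/(\beta\sqrt d\,\holdon)$ into the bound on $A_t$ gives $A_t\gtrsim c_\rho\,\tc^p\,\epsilon^{2p}/(\beta^p d^{(2p+1)/2}\holdon^{2p-1})$, and choosing the universal constant $\tc=\tfrac{1}{20}$ small enough to absorb the remaining numerical factors (the constant $c'$, the hidden constant in $\E[\abs{u_1}^{p+1}]$, the $2^p$, etc.) makes this at least $\eta$; since $A_t\ge\eta$ is all that is needed, feeding it back into the first display and taking $\E_t$ yields \eqref{eq:fin1orig}.

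The main obstacle is the estimate of $A_t$, and within it the control of the smoothness-induced perturbation $\xi_t$ (of size $\beta\gamma^2$) once it has passed through the transfer $\rho$, which vanishes to order $p$ at the origin: near the equator $\langle\nabla f(\w_t),\u_t\rangle\approx0$ the perturbation overwhelms the signal (and may reverse its sign), so one must argue that region contributes only a lower-order amount --- this is exactly what pins down the calibration $\gamma\asymp\epsilon/(\beta\sqrt d\,\holdon)$, and checking with the concrete constant $\tc=1/20$ that enough of the signal survives to reach $A_t\ge\eta$ is the delicate bookkeeping. Everything else --- non-expansiveness of the projection, $\norm{\g_t}=1$, the two convexity inequalities, and the Gaussian-type moment $\E[\abs{u_1}^{p+1}]=\Theta(d^{-(p+1)/2})$ --- is routine.
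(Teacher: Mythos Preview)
Your reduction is right and matches the paper: non-expansiveness of the projection plus $\norm{\g_t}=1$ gives
$\E_t\norm{\w_{t+1}-\w^*}^2 \le \norm{\w_t-\w^*}^2 - 2\eta A_t + \eta^2$
with $A_t=\E_{\u_t}[\rho(\Delta_t)\langle\u_t,\w_t-\w^*\rangle]$, and the lemma follows once $A_t\ge\eta$; a direct check shows $\eta^2$ is exactly the constant in \eqref{eq:fin1orig}. Where you diverge from the paper is in how you try to establish $A_t\ge\eta$, and there the route you propose has a genuine obstacle.

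The paper does \emph{not} linearize $\Delta_t$. Instead it applies the sphere identity of \cref{lem:fkm}, $\E_{\u}[g(\u)\u]=\tfrac1d\E_{\u}[\nabla g(\u)]$, to $g(\u)=\rho\!\big(f(\w_t+\gamma\u)-f(\w_t-\gamma\u)\big)$, obtaining the exact equality
\[
A_t=\frac{\gamma}{d}\,\E_{\u_t}\!\Big[\rho'\big(|\Delta_t|\big)\,\big(\nabla f(\w_t+\gamma\u_t)+\nabla f(\w_t-\gamma\u_t)\big)\cdot(\w_t-\w^*)\Big].
\]
The point is that $\rho'(|\Delta_t|)\ge 0$ by \cref{assump:tf}(ii), while convexity and $\beta$-smoothness give the \emph{pointwise} (in $\u_t$) lower bound $(\nabla f(\w_t+\gamma\u_t)+\nabla f(\w_t-\gamma\u_t))\cdot(\w_t-\w^*)\ge 2(f(\w_t)-f(\w^*))-2\beta\gamma^2>\epsilon$. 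One can therefore pull the positive factor $\rho'(|\Delta_t|)$ outside, lower-bound it by $\rhoo'(|\Delta_t|)=c_\rho p|\Delta_t|^{p-1}$, and then use smoothness plus \cref{lem:normgrad} and \cref{lem:gradf_lb} to bound $\E_{\u_t}|\Delta_t|$ from below. No signal/perturbation split is needed, and the component of $\w_t-\w^*$ orthogonal to $\nabla f(\w_t)$ never has to be isolated.

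Your approach, by contrast, writes $\Delta_t=\bar\Delta_t+\xi_t$ and splits $A_t$ into a signal piece (handled correctly by the reflection symmetry and $|\rho(x)|\ge c_\rho|x|^p$) and a remainder $\E_{\u_t}[(\rho(\Delta_t)-\rho(\bar\Delta_t))\langle\u_t,\w_t-\w^*\rangle]$. The symmetry that kills the $\v_\perp$-part works only for $\bar\Delta_t$, since $\Delta_t$ depends on all of $\u_t$, not just $\langle\nabla f(\w_t),\u_t\rangle$. So you must bound $|\rho(\Delta_t)-\rho(\bar\Delta_t)|$, and here the assumptions give you nothing: \cref{assump:tf} supplies only a \emph{lower} bound on $\rho'$, never an upper bound, and $|\rho|\le 1$ is far too crude. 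Even on your ``good'' set where $\sign\Delta_t=\sign\bar\Delta_t$, $\rho$ could be arbitrarily steep and the difference order one; on the equatorial band the trivial bound $|\rho(\Delta_t)-\rho(\bar\Delta_t)|\le 2$ times the band probability $O(\tc)$ times $\|\v_\perp\|\lesssim D$ already dominates the signal $\eta\asymp\epsilon^{2p}$. This is not just delicate bookkeeping---the perturbation term cannot be shown to be lower order under the stated hypotheses. The FKM identity is precisely what lets the paper sidestep this: it converts the expectation to one involving $\rho'$, for which the one-sided bound in \cref{assump:tf}(ii) is exactly what is needed.
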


\begin{proof}
We first note that by our update rule,
\begin{align}
\label{eq:ub1orig}
    \E_t[\norm{\w_{t+1}-\w^*}^2] 
    &\leq 
    \E_t[\norm{\w_{t}-\w^*}^2] - 2\eta \E_t[\brk[s]{ \g_t \dotp (\w_t-\w^*) }] + \eta^2 \nonumber
    \\
    &=
    \norm{\w_{t}-\w^*}^2 - 2\eta \E_t[\brk[s]{ \g_t \dotp (\w_t-\w^*) }] + \eta^2 
    .
\end{align}
% 

%\textbf{Assumptions: } Let us assume \tk{broken here. in any case, we shouldn't include assumptions within a proof.....}

On the other hand, since both $f$ and $\rho$ is convex (by assumption), using \cref{lem:fkm} we get:
\begin{align}
	\label{eq:lb1}
    & \E_t [\g_t \dotp (\w_t-\w^*)] 
    = 
    \E_{\u_t} \brk[s]!{ \rho\brk{ f(\w_t + \gamma \u_t) - f(\w_t - \gamma \u_t) } \u_t \dotp (\w_t-\w^*) \mid \cH_t } \nonumber
    \\
    &= 
    \E_{\u_t} \brk[s]!{ {\rho\brk{ f(\w_t + \gamma \u_t) - f(\w_t - \gamma \u_t) }} \cdot {\u_t } \mid \cH_t } \dotp (\w_t-\w^*) \nonumber
   % \\
   % &\geq
   % \E_{\u_t} \brk[s]!{ \abs!{\rhoo\brk{ f(\w_t + \gamma \u_t) - f(\w_t - \gamma \u_t) }} \cdot \abs!{\u_t \dotp (\w_t-\w^*)} \mid \cH_t } \nonumber
   % \\
   % &=
   % \E_{\u_t} \brk[s]!{ \rhoo\brk{ f(\w_t + \gamma \u_t) - f(\w_t - \gamma \u_t) } \cdot \u_t \dotp (\w_t-\w^*) \mid \cH_t } \nonumber
    \\
    &= \nonumber 
    \frac{\gamma}{d}\E_{\u_t}\brk[s]!{ \rho'\big( {f(\w_t+\gamma\u_t) - f(\w_t-\gamma\u_t)} \big) \big( \nabla f(\w_t+\gamma\u_t) + \nabla f(\w_t-\gamma\u_t) \big) \mid \cH_t } \dotp (\w_t-\w^*)
    \\
    &= 
    \frac{\gamma}{d}\E_{\u_t}\brk[s]!{ \rho'\big( \abs{f(\w_t+\gamma\u_t) - f(\w_t-\gamma\u_t)} \big) \big( \nabla f(\w_t+\gamma\u_t) + \nabla f(\w_t-\gamma\u_t) \big) \mid \cH_t } \dotp (\w_t-\w^*)
    ,
\end{align}
%where the inequality follows by the definition of $\rhoo$ which is \pprox\, of $\rho$ (see \cref{def:rhoo}).   
% 
where the last equality follows since  $\rho(-x) = -\rho(x)$ (see \cref{assump:tf}-i). Now, using convexity of $f$ and $\beta$-smoothness, we can further show that:
\begin{align}
\label{eq:may20}
&	\E_{t} [\g_t \dotp (\w_t-\w^*)] 
    \geq
    \frac{2\gamma}{d} \E_{\u_t}\brk[s]!{ \rho'\big( \abs{ f(\w_t+\gamma\u_t) - f(\w_t-\gamma\u_t) } \big) \brk!{ f(\w_t) - f(\w^*) - \beta \gamma^2 } },
	\\
&  \text{and also } \E_{\u_t}[\abs*{ f(\w_t+ \gamma \u_t) - f(\w_t-  \gamma \u_t)}] \geq \E_{\u_t}[\abs{2 \gamma \u_t \dotp \nabla f(\w_t) }] - \beta \gamma^2
= 2\frac{\tc \gamma \norm{\nabla f(\w_t)} }{\sqrt{d}} - \beta \gamma^2 \nonumber
\end{align}
where the last equality is due to \cref{lem:normgrad}.
Additionally, since by assumption $f(\w_t)-f(\w^*) > \epsilon$, i.e. the suboptimality gap to be at least $\epsilon$, by \cref{lem:gradf_lb} we can further derive a lower bound: 
\begin{align*}
\E_{\u_t}[\abs*{ f(\w_t+ \gamma \u_t) - f(\w_t-  \gamma \u_t)}] \geq \frac{2\tc \gamma \epsilon}{\sqrt{d}\norm{\w_t - \w^*}} - \beta \gamma^2 
= {\frac{2\tc \gamma \epsilon}{\sqrt{d}D}} - \beta \gamma^2. 
\end{align*}
Now for the choice of $\gamma = \frac{\tc \epsilon}{\beta \sqrt{d}\holdon}$ since the lower bound in right hand side is always positive, by monotonicity of $\rhoo'$ in the positive orthant we get: 
\begin{align}
\label{eq:may21}
\rho(\E_{\u_t}[\abs*{ f(\w_t+ \gamma \u_t) - f(\w_t -  \gamma \u_t)}]) & \nonumber
\geq 
 \rhoo'(\E_{\u_t}[\abs*{ f(\w_t+ \gamma \u_t) - f(\w_t -  \gamma \u_t)}]) \\
 & \geq \rhoo'\bigg( {\frac{\tc^2 \epsilon^2}{\beta {d}D^2}} \bigg) 
 = c_\rho p \bigg( {\frac{\tc^2 \epsilon^2}{\beta {d}D^2}} \bigg)^{p-1},
 \end{align}
 where the first inequality follows by the definition of $\rhoo$ which is \pprox\, of $\rho$ (see \cref{assump:tf}), and the last equality follows since by definition $\rho'(x) = c_\rho p x^{p-1} \text{ for any } x \in \R_+$.  
Finally combining \cref{eq:may20,eq:may21}, and the choice of $\gamma$, we can finally derive the lower bound: 
\vspace{-1pt}
\begin{align}
	\label{eq:lb4orig}
	\E_{t} [\g_t \dotp (\w_t-\w^*)] 
	\geq 
	\frac{pc_\rho \tc^{2p-1} \epsilon^{2p}}{d^{(2p+1)/2}\beta^p \holdon^{2p-1}}
	,
\end{align}
% \vspace{-5pt}
Combining \cref{eq:ub1orig} with \cref{eq:lb4orig}:
% \vspace{-5pt}
\begin{align}
    \E_t [\norm{\w_{t+1}-\w^*}^2] 
    &\leq 
    \norm{\w_{t}-\w^*}^2 - 2\eta \E_t[\brk[s]{ \g_t \dotp (\w_t-\w^*) }] + \eta^2 \nonumber
    \\
   &\leq \nonumber 
    \norm{\w_{t}-\w^*}^2
     - 2\eta \Bigg( \frac{pc_\rho \tc^{2p-1} \epsilon^{2p}}{d^{(2p+1)/2}\beta^p \holdon^{2p-1}} \Bigg) + \eta^2
     \\
    &\leq  
    \norm{\w_{t}-\w^*}^2
    - \Bigg( \frac{pc_\rho \tc^{2p-1} \epsilon^{2p}}{d^{(2p+1)/2}\beta^p \holdon^{2p-1}} \Bigg)^2, ~~\text{ setting } \eta = \Bigg( \frac{pc_\rho \tc^{2p-1} \epsilon^{2p}}{d^{(2p+1)/2}\beta^p \holdon^{2p-1}} \Bigg) \nonumber 
    ,
\end{align}
which concludes the claim of \cref{lem:pgd}.
\end{proof}

Now coming back to the main proof of \cref{thm:pgd}, note by iteratively taking expectation over $\cH_T$ on both sides of \cref{eq:fin1orig} and summing over $t=1,\ldots,T$, we get,
\begin{align*}
	\E_{\cH_T} [\norm{\w_{T+1}-\w^*}^2] 
	& \leq 
	\norm{\w_{1}-\w^*}^2
	- \frac{p^2 (\tc^{2p-1} c_\rho)^2\epsilon^{4p}}{d^{2p+1}\beta^{2p} \holdon^{4p-2}}T	 
	.
\end{align*}
%\vspace{-5pt}
However, note if we set $T = \frac{d^{2p+1}\beta^{2p} \holdon^{4p}}{p^2 (\tc^{2p-1} c_\rho)^2\epsilon^{4p}}$,
this implies 
$\E_{\cH_T} [\norm{\w_{T+1}-\w^*}^2] \leq 0$, or equivalently $\w_{T+1} = \w^*$, which concludes the claim. 

To clarify further, note we show that for any run of Alg. \ref{alg:pgd} if indeed $f(\w_{t}) - f(\w^*) > \epsilon$ continues to hold for all $t = 1,2, \ldots T$, then $\w_{T+1} = \w^*$ at $T = \frac{d^{2p+1}\beta^{2p} \holdon^{4p}}{p^2 (\tc^{2p-1} c_\rho)^2\epsilon^{4p}}$. If not, there must have been a time $t \in [T]$ such that $f(\w_t) - f(\w^*) < \epsilon$. %This concludes the proof with $T_\epsilon = T$.
\end{proof}	

%\red{add cor here}	

While \cref{thm:pgd} gives the convergence rate of \cref{alg:pgd} for any general `admissible transfer function' $\rho$ (see \cref{sec:prob} for the setup), the following theorem shows that \cref{alg:pgd} can achieve improved convergence rates for certain class of special transfer functions, as remarked in \cref{thm:pgdi}. The proof is given in \cref{app:pgdi}.  

\begin{restatable}[Improved Convergence Rate for Special Transfer Functions.]{thm}{thmpgdi}
\label{thm:pgdi}
\cref{alg:pgd} yields improved $\epsilon$-convergence rate ($T_\epsilon$) for some special class of well-defined transfer functions, e.g.:
\begin{enumerate}[nosep]
\item Linear transfer functions $\rho(x) = c_\rho x, ~\forall x \in \R_+$, then we have $T = \frac{2 d^{2}\beta \holdon^{2}}{c_\rho^2\epsilon^{3}}$; %\cite{sahatewari}
%\item Sign transfer functions $\rho(x) = \sign(x), ~\forall x \in \R_+$, then we get $T = O\Big( \frac{d D \beta}{\epsilon}\Big)$; %\cite{SKM21}
\item Sigmoid transfer functions $\rho(x) = \frac{1-e^{-\omega x}}{1+e^{-\omega x}}, ~\forall x \in \R_+$, $\omega > 0$, then we have $T = O\Big(\frac{d^{2}\beta \holdon^{2}}{c_\rho^2\epsilon^{3}}\Big)$. %\cite{sahatewari} \cite{Yue+09}
\end{enumerate}
\end{restatable}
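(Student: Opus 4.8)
The plan is to revisit the proof of Theorem \ref{thm:pgd}, specifically the per-round progress inequality in Lemma \ref{lem:pgd}, and observe that the only place where the polynomial approximation $\rhoo$ is used in a lossy way is the step \eqref{eq:may21}, where we lower-bound $\rho'$ on a small positive argument by $c_\rho p x^{p-1}$. For the two special families of transfer functions listed, we can instead keep the exact form of $\rho$ (or $\rho'$) all the way through, which eliminates the double penalty (one factor of $\epsilon^{2p}$ from the magnitude of the ``$p$-th order proxy'' descent direction and another from the resulting small step size $\eta$), and recovers a single $\epsilon$-power loss. Concretely, for the linear case I would set $p=1$, $c_\rho$ as given, so that $\rho'(x) = c_\rho$ identically; for the sigmoid case I would compute $\rho'$ near the origin and note that although its leading behaviour is linear (minimal degree $p=1$), the constant $c_\rho$ in Assumption \ref{assump:tf}-(ii) is (up to constants) $\omega/2$, so the analysis again behaves like the $p=1$ case.

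First, for the linear transfer function $\rho(x) = c_\rho x$: I would redo the chain \eqref{eq:lb1}--\eqref{eq:may20} verbatim; since $\rho'(\cdot) \equiv c_\rho$ is constant, the $\rho'$ factor simply pulls out, and \cref{eq:may20} gives
\begin{align*}
\E_t[\g_t \dotp (\w_t - \w^*)] \ge \frac{2 c_\rho \gamma}{d}\,\brk!{f(\w_t) - f(\w^*) - \beta \gamma^2}.
\end{align*}
Under the standing assumption $f(\w_t) - f(\w^*) > \epsilon$, and choosing $\gamma$ proportional to $\epsilon/(\beta\sqrt{d}\holdon)$ exactly as before so that $\beta\gamma^2 \le \epsilon/2$, the bracket is $\Omega(\epsilon)$ and hence $\E_t[\g_t\dotp(\w_t-\w^*)] = \Omega(c_\rho\epsilon^2/(d^{3/2}\beta\holdon))$. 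Plugging this into \eqref{eq:ub1orig} and optimizing $\eta$ (setting $\eta$ equal to this lower bound) yields a per-round decrease of $\norm{\w_t-\w^*}^2$ of order $c_\rho^2\epsilon^4/(d^3\beta^2\holdon^2)$; summing over $t$ and using $\norm{\w_1-\w^*}^2 \le \holdon^2$ gives a query budget $T = \Theta(d^2\beta\holdon^2/(c_\rho^2\epsilon^3))$. Tracking the absolute constants (the $\tc$ factor from \cref{lem:normgrad} and the factor $2$ from $\beta\gamma^2 \le \epsilon/2$) is what pins down the stated constant; I expect $T = 2d^2\beta\holdon^2/(c_\rho^2\epsilon^3)$ after being careful with these.

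Second, for the sigmoid $\rho(x) = (1-e^{-\omega x})/(1+e^{-\omega x}) = \tanh(\omega x/2)$: the key observation is that $\rho$ is still concave on $\R_+$ and $\rho'(x) = \tfrac{\omega}{2}\operatorname{sech}^2(\omega x/2)$ is decreasing in $x$ on $\R_+$, so on the relevant small interval $(0, \tfrac{\tc^2\epsilon^2}{\beta d D^2})$ — whose length is bounded by a constant independent of $\epsilon$ — we have $\rho'(x) \ge \rho'(x_0)$ for the right endpoint $x_0$, and $\rho'(x_0) = \Omega(\omega)$ as long as $x_0$ is bounded (which it is, since $\epsilon$ is small and $\beta, d, D$ are fixed; more precisely one can just use that $\rho'$ is bounded below by a positive constant on any bounded interval). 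Thus the same argument as in the linear case goes through with $c_\rho$ replaced by a constant multiple of $\omega$, and we obtain $T = O(d^2\beta\holdon^2/(c_\rho^2\epsilon^3))$ (here $c_\rho = \Theta(\omega)$ is the constant from \cref{assump:tf}-(ii) for this $\rho$). Alternatively, and perhaps cleanest, I would simply invoke \cref{lem:taylor}: the sigmoid has a convergent series expansion about the origin with minimal degree $p=1$, leading coefficient $a_1 = \omega/2$, and uniformly bounded coefficients, so it is an admissible transfer function with $p=1$ and $c_\rho = \Theta(\omega)$, and then the linear-case computation above applies directly.

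The main obstacle I anticipate is not conceptual but bookkeeping: making sure that the choice of $\gamma$ for which \eqref{eq:may21}-type positivity holds is compatible with the choice that makes the bracket $f(\w_t)-f(\w^*) - \beta\gamma^2$ a constant fraction of $\epsilon$, and carefully propagating the universal constants $\tc$ (from \cref{lem:normgrad}/\cref{lem:gradf_lb}) so that the final constant in $T$ comes out as claimed rather than merely order-correct. For the sigmoid case there is the additional subtlety of confirming that the small-argument interval on which we need $\rho' \ge c_\rho p\abs{x}^{p-1}$ is genuinely contained in the interval $(-r,r)$ of \cref{assump:tf}, i.e. that $r$ can be taken to be a constant independent of $\epsilon$; this follows since $\tanh(\omega x/2)$ has radius of convergence $\pi/\omega > 0$ and \cref{lem:taylor} then supplies an explicit constant $r = \min\{\pi/\omega, \ \pi\omega/(16)\}$ or similar.
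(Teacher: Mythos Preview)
Your high-level plan is right---for $\rho'(x)\equiv c_\rho$ the factor pulls out of \eqref{eq:may20} and you get
\[
\E_t[\g_t\dotp(\w_t-\w^*)] \ge \frac{2c_\rho\gamma}{d}\brk!{f(\w_t)-f(\w^*)-\beta\gamma^2},
\]
exactly as the paper does. But there is a genuine gap in the next step: keeping $\gamma$ ``exactly as before'', i.e.\ $\gamma=\tc\epsilon/(\beta\sqrt d\,\holdon)$, does \emph{not} deliver the $\epsilon^{-3}$ rate. With that choice the lower bound is $\Theta\!\big(c_\rho\epsilon^2/(d^{3/2}\beta\holdon)\big)$, so the optimal $\eta$ gives a per-round decrease $\Theta\!\big(c_\rho^2\epsilon^4/(d^{3}\beta^{2}\holdon^{2})\big)$, and dividing $\holdon^2$ by this gives $T=\Theta\!\big(d^{3}\beta^{2}\holdon^{4}/(c_\rho^2\epsilon^{4})\big)$---i.e.\ exactly the $p=1$ instance of \cref{thm:pgd}, not the improved bound. (Your own arithmetic in the last line slipped: $\holdon^{2}\big/(c_\rho^2\epsilon^4/(d^3\beta^2\holdon^2))$ is $\epsilon^{-4}$, not $\epsilon^{-3}$.)

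The missing idea is that the small $\gamma$ in \cref{thm:pgd} was forced only by the need to make the argument of $\rhoo'$ in \eqref{eq:may21} positive; once $\rho'$ is constant that constraint disappears, and you are free to take $\gamma$ as large as $\beta\gamma^2\le\epsilon/2$ allows, namely $\gamma=\sqrt{\epsilon/(2\beta)}$. With this larger $\gamma$ the prefactor $2c_\rho\gamma/d$ scales like $\sqrt{\epsilon}$ rather than $\epsilon$, the lower bound becomes $\Theta(c_\rho\epsilon^{3/2}/(d\sqrt\beta))$, and after optimizing $\eta$ the per-round decrease is $c_\rho^2\epsilon^{3}/(2\beta d^{2})$, yielding $T=2d^{2}\beta\holdon^{2}/(c_\rho^2\epsilon^{3})$ as stated. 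The sigmoid case then reduces to this one just as you argue, via \cref{lem:taylor} (or directly, since $\rho'(x)\ge c$ on a fixed neighbourhood of $0$), so once you fix the $\gamma$-tuning in Case~1 the rest of your proposal goes through.
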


It is worth noting that, for \emph{Linear transfer functions}, i.e. when $\rho(x)=c_\rho x$ $(p=1)$, our setting is equivalent to the bandit feedback (or zeroth-order) optimization setting \citep{Flaxman+04} and our proposed algorithm obtains the same $O(d^{2/3}T^{-1/3})$ convergence rate of \cite{sahatewari} which is the best known rate till date for zeroth-order smooth convex optimization with gradient descent based algorithms. Moreover, for \emph{Sign transfer functions}, i.e. for $\rho(x)=\sign(x)$ ($p=0$), our algorithm can essentially recovers the $\beta$-NGD algorithm (Algorithm 1) of \cite{SKM21} and hence we can obtain the optimal convergence guarantee of $T = O\Big( \frac{d D \beta}{\epsilon}\Big)$ (see analysis of Case-3 in \cref{app:pgdi} for details).
These results thus show the generalizability of our problem framework as well as our algorithmic approach (\cref{alg:pgd}).

\section{Strongly Convex Dueling Optimization}
% \section{\ekgd: Optimal Convergence-Rates for Strongly Convex and Smooth Functions}
\label{sec:strong}

In this section, we analyze an epoch-wise version of \kgd\, (\cref{alg:pgd}) which is shown to yield better convergence guarantees for $\alpha$-strongly convex and $\beta$-smooth functions. 
The key idea lies in noting that in order to design an optimal algorithm for $\alpha$-strongly convex $\beta$-smooth functions, one can simply iteratively reuse any $\beta$-smoothly convex optimization routine (e.g. we can use our Alg. \ref{alg:pgd}) by running it as a black-box over a successive number of epoch-wise warm-starts.  
Our resulting convergence analysis (\cref{thm:epgd}) shows that, in this case the algorithm can find an $\epsilon$-optimal point upon querying just $O(\epsilon^{-2p})$ pairwise comparisons (as opposed to the $O(\epsilon^{-4p})$ sample complexity rate for the $\beta$-smooth case, see \cref{thm:pgd}).
This is possible due to the  nice properties of strong convexity, where nearness in the suboptimality gap in terms of the function values, $f(\w)-f(\w^*)$ implies nearness in terms of the $\ell_2$-distance from $\norm{\w-\w^*}$ (see the third property in Lem. \ref{lem:prop_a}). 
In fact the $O(\epsilon^{-2p})$ convergence rate can shown to be information theoretically optimal (see \cref{rem:epgd}).

\subsection{Algorithm Design: \ekgd} 

As motivated above, our proposed method \ekgd \, (\cref{alg:epgd}) uses an `epoch-wise black-boxing of a smooth-convex optimization routine' with `warm-starting' approach. For our purpose, we use the earlier proposed \kgd\, (\cref{alg:pgd}) as the black-box. More formally, the algorithm, starts with some initial point $\w_1$ and runs over a sequence of $k_\epsilon = O\Big( \log \frac{\beta D^2}{\epsilon} \Big)$ epochs: Inside each epoch $k \in [k_\epsilon]$, we call the \kgd$(\w_k,\eta_k,\gamma_k,t_k)$ subroutine with the the initial (warm-start) iterate $\w_k$,   suitably tuned parameters $\eta_k,\gamma_k$ and a query budget of $t_k$. The decision point returned by \kgd\, after $t_k$ steps is considered to the next iterate, setting $\w_{k+1} \leftarrow $ \kgd$(\w_k,\eta_k,\gamma_k,t_k)$ and we proceed to the $(k+1)$-th epoch, warm-starting it with $\w_{k+1}$.

The key idea behind the epoch-wise warm-start approach exploits the fact that between any two consecutive epochs, say $k$ and $k+1$, the $\ell_2$ distance of $\w_k$ from $\w^*$ gets reduced by a constant fraction on an expectation (\cref{lem:epgd}). Thus, it can be shown that running the algorithms for roughly $k_{\epsilon} = O\Big( \log \frac{\beta \norm{\w_1-\w^*}^2}{\epsilon} \Big)$ epoch, would lead to $\norm{\w_{k_\epsilon}-\w^*}^2 \leq \epsilon$, which in turn imply the $\epsilon$-convergence (see the proof of \cref{thm:epgd} for details). The formal description of the algorithm is given in \cref{alg:epgd}.

\vspace{-5pt}
\begin{center}
	\begin{algorithm}[h]
		\caption{\bf \ekgd($\epsilon$)}
		\label{alg:epgd}
		\begin{algorithmic}[1] 
			\STATE {\bfseries Input:} Error tolerance $\epsilon > 0$%
			\STATE {\bf Initialize} Initial point: $\w_1 \in \R^d$ such that $\norm{\w_1-\w^*} \leq D$, Phase count $k_\epsilon:= \lceil \log_{4/3} \frac{\beta D^2}{2\epsilon}  \rceil$ 
			\\ $D_1 = D$, $B:= \frac{2c_\rho p}{(\alpha+\beta)}\Big( \big(\nicefrac{\alpha^2}{4\beta} \big)^{p} \frac{\tc^{2p-1} }{d^{\frac{2p+1}{2}}} \Big)$, $\tc \leftarrow$ the universal constant from \cref{lem:normgrad}.
			%\\ $ \gamma_1 \leftarrow ?$, $\eta_1 \leftarrow ?$
			%\STATE Update $\w_{2} \leftarrow$ \kgd$\big(\w_1,\eta_1, \gamma_1,t_1\big)$
			%
			\FOR{$k = 1,2,3,\ldots, k_\epsilon$}
			\STATE $\eta_k \leftarrow B D_k^{2p+1}$, $ \gamma_k \leftarrow \frac{\tc \alpha D_k}{2\beta \sqrt{d}}$, $t_k = \frac{1}{2B^2{(D_k^{2})}^{2p}}$, $D_{k+1} \leftarrow \sqrt{\frac{3}{4}}D_{k}$.
			\STATE Update $\w_{k+1} \leftarrow$ \kgd$\big(\w_{k},\eta_k, \gamma_k,t_k\big)$
			\ENDFOR   
			\STATE Return $\w_{k_\epsilon+1}$
		\end{algorithmic}
	\end{algorithm}
\end{center}
\vspace{-5pt}

\subsection{Convergence Analysis for Smooth and Strongly Convex Functions}
\label{sec:analysis_strong}

\begin{restatable}[Convergence Analysis of \ekgd\, for Smooth and Strongly convex Functions]{thm}{thmepgd}
\label{thm:epgd}
Consider a dueling feedback optimization problem parameterized  by any general admissible transfer function $\rho$ with \pprox\, $\rhoo$ and a $\beta$ smooth $\alpha$-strongly convex function $f: \cD \mapsto \R$. 
Then given any $\epsilon > 0$, the the final point $\w_{k_\epsilon+1}$ returned by \cref{alg:epgd} satisfies $\E[f(\w_{t+1})] - f(\w^*) \le \epsilon$, with a sample complexity of at most $O(\frac{1}{B^2{\epsilon}^{2p}})$ pairwise comparisons. (Here the constant $B$ is as defined in \cref{alg:epgd}, $\tc =  \tfrac{1}{20}$ is a universal constant. % as defined in \cref{lem:normgrad}).
%if the initial point $\w_1$ chosen in \ekgd~(\cref{alg:epgd}) is such t $\norm{\w_1-\w^*} \leq D$
\end{restatable}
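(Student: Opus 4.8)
The plan is to treat \cref{alg:epgd} as exactly what the text advertises: a black-box, warm-started reduction from strongly convex optimization to the smooth–convex routine \kgd. The whole argument rests on a single per-epoch progress lemma (\cref{lem:epgd}), which I would state as: if at the start of epoch $k$ we have $\norm{\w_k-\w^*}\le D_k$, then the point $\w_{k+1}=\kgd(\w_k,\eta_k,\gamma_k,t_k)$ satisfies $\E\brk[s]{\norm{\w_{k+1}-\w^*}^2 \mid \cH_k}\le \tfrac34 D_k^2 = D_{k+1}^2$. To prove this I would invoke \cref{thm:pgd} with the roles of its accuracy parameter and its diameter played by $\epsilon_k \propto \alpha D_k^2$ and by $D_k$ respectively; substituting these into the formulas of \cref{thm:pgd} is precisely what produces the epoch parameters $\gamma_k=\tc\alpha D_k/2\beta\sqrt d$ and $t_k = 1/2B^2 D_k^{4p}$ in \cref{alg:epgd}. \cref{thm:pgd} then hands us an iterate produced inside the epoch whose expected suboptimality is at most $\epsilon_k$, and $\alpha$-strong convexity, $\tfrac\alpha2\norm{\w-\w^*}^2 \le f(\w)-f(\w^*)$, upgrades this to the claimed $\ell_2$ contraction once $\epsilon_k$ is chosen as the appropriate fraction of $\alpha D_k^2$.

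With \cref{lem:epgd} in hand the outer loop is a one-line induction. Starting from $\norm{\w_1-\w^*}\le D=D_1$, the tower rule gives $\E\brk[s]{\norm{\w_{k+1}-\w^*}^2}\le \tfrac34\,\E\brk[s]{\norm{\w_k-\w^*}^2}$ for each $k$, hence $\E\brk[s]{\norm{\w_{k_\epsilon+1}-\w^*}^2}\le (3/4)^{k_\epsilon}D^2$. Since $k_\epsilon=\lceil\log_{4/3}(\beta D^2/2\epsilon)\rceil$ this is at most $2\epsilon/\beta$, and $\beta$-smoothness together with $\nabla f(\w^*)=\0$ yields $f(\w)-f(\w^*)\le \tfrac\beta2\norm{\w-\w^*}^2$, so the returned point obeys $\E[f(\w_{k_\epsilon+1})]-f(\w^*)\le\epsilon$.

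For the sample complexity I would just sum the epoch budgets. Using $D_k^2=(3/4)^{k-1}D^2$, the total number of queries is $\sum_{k=1}^{k_\epsilon} t_k = \tfrac{1}{2B^2 D^{4p}}\sum_{k=1}^{k_\epsilon}(4/3)^{2p(k-1)}$, a geometric series with ratio $(4/3)^{2p}>1$, hence dominated up to a factor depending only on $p$ by its last term $(4/3)^{2p(k_\epsilon-1)}$. Because $(4/3)^{k_\epsilon}=O(\beta D^2/\epsilon)$, that last term is $O((\beta D^2/\epsilon)^{2p})$, and multiplying by $1/2B^2 D^{4p}$ the $D$-dependence cancels, leaving a total of $O(\beta^{2p}/B^2\epsilon^{2p})=O(1/B^2\epsilon^{2p})$ comparisons, matching the claim.

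The main obstacle is making the per-epoch lemma into a genuinely rigorous black-box reduction, and there are two points that need care. First, applying \cref{thm:pgd} with ``diameter'' $D_k$ requires that the iterates generated inside epoch $k$ never leave a ball of radius $O(D_k)$ about $\w^*$; this must be argued from the warm-start condition $\norm{\w_k-\w^*}\le D_k$, the in-expectation monotone decrease of $\norm{\w_t-\w^*}^2$ established in \cref{lem:pgd} while the current iterate is $\epsilon_k$-suboptimal, and the $O(\eta_k)=O(B D_k^{2p+1})$ per-step displacement bound, possibly aided by strong convexity. Second, \cref{thm:pgd} only guarantees a good \emph{minimum-over-$t$} iterate, so one must be explicit that \kgd\ returns (or that one selects) the best iterate of the epoch and feeds that forward as $\w_{k+1}$; once that is pinned down, chaining the conditional expectations across epochs via the tower rule is routine. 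A secondary, purely bookkeeping matter is tracking the constants so that the substitution $\epsilon_k\propto\alpha D_k^2$ is simultaneously consistent with $\gamma_k$ and with $t_k$ as written in \cref{alg:epgd}; this is where the $\alpha/(\alpha+\beta)$ factor in the definition of $B$ enters, reflecting the sharper smooth-plus-strongly-convex co-coercivity estimate used inside the epoch.
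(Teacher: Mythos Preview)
Your outer-loop argument and the sample-complexity calculation match the paper exactly, and the overall architecture (per-epoch contraction lemma, then geometric telescoping, then $\beta$-smoothness to convert to function value) is right.

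The divergence is entirely in how the per-epoch lemma (\cref{lem:epgd}) is established. You propose to obtain it as a literal black-box call to \cref{thm:pgd} with $\epsilon_k\propto\alpha D_k^2$ and diameter $D_k$, and you flag two obstacles: (i) \cref{thm:pgd} only certifies a \emph{best} iterate, so one must ``select'' it; and (ii) the per-epoch diameter must be controlled. The paper does \emph{not} resolve these by patching the black-box route---it abandons it. Obstacle (i) is in fact fatal for your plan as stated: in the dueling model we never observe $f(\w_t)$, so there is no way to select the best iterate of an epoch, and \kgd\ as written simply hands back its last iterate.

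What the paper does instead is re-run the inner-loop analysis \emph{from scratch} inside each epoch, but now using strong convexity directly. The potential is $\norm{\x_t-\w^*}^2$ throughout (never $f(\x_t)-f(\w^*)$), the second factor $\big(\nabla f(\x_t+\gamma\u_t)+\nabla f(\x_t-\gamma\u_t)\big)\dotp(\x_t-\w^*)$ is lower-bounded via the smooth-plus-strongly-convex co-coercivity inequality (\cref{lem:prop_ab}) by a positive multiple of $\norm{\x_t-\w^*}^2$, and $\norm{\nabla f(\x_t)}$ is lower-bounded by $\alpha\norm{\x_t-\w^*}$ rather than by $\epsilon/D$. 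This is why the analysis needs only a \emph{lower} bound $\norm{\x_t-\w^*}\ge D_0/2$ on the distance and no upper ``diameter'' bound, dissolving your obstacle (ii). The last-iterate issue is then handled by a two-case split: either $\norm{\x_t-\w^*}\ge D_0/2$ for all $t\le T$, in which case the per-step contraction telescopes to $\E[\norm{\x_{T+1}-\w^*}^2]\le D_0^2/2$; or some $\x_\tau$ gets within $D_0/2$, in which case the remaining $T-\tau$ steps can drift by at most $T\eta_k^2=D_0^2/2$, giving $\E[\norm{\x_{T+1}-\w^*}^2]\le D_0^2/4+D_0^2/2=3D_0^2/4$. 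This drift-control mechanism is exactly the missing idea in your plan: it is what lets the \emph{last} iterate serve as $\w_{k+1}$ without any selection step.
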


 \begin{rem}[Optimal Convergence of \ekgd\, for Strongly Convex and Smooth Functions]
 \label{rem:epgd}
Note if $\rho(\x)$ is exactly of the form $\rho(\x) = \sign(\x)\abs{x}^p$, then our dueling (pairwise preference) feedback model is equivalent to the same used in \cite{Jamieson12}. It is interesting to note that, their derived lower bound sample complexity for the $\epsilon$-convergence for smooth and strongly convex functions was indeed shown to be $\Omega(\epsilon^{-2p})$ which implies the 
optimality of \ekgd\,(\cref{alg:epgd}) for $\alpha$-strongly convex and $\beta$-smooth functions for any values of $p$. 
The line search algorithm proposed by \cite{Jamieson12} also achieves the same convergence rate for strongly convex functions, modulo some additional multiplicative polylogarithmic terms in $d, \epsilon$ etc, which we do not incur. Also \ekgd\, is much more modular and simpler to implement as well as relatively easier to analyze.
%, in fact \cite{Jamieson12} does not give a concrete algorithm pseudocode to work with which makes it harder to implement. 
%
Besides, the application scope of \ekgd\, is much more general that applies to the class of any general transfer function (as discussed in \cref{sec:prob}) and also works for non-strongly convex functions (see \cref{thm:pgd}). 

Moreover, \cref{thm:epgd} shows that \cref{alg:epgd} actually gives optimal rates for the special transfer functions studied earlier, e.g., $O(\epsilon^{-2})$ convergence rate for \emph{linear transfer function} $(p=1)$ a.k.a. zeroth-order feedback model (as proved in \cite{Hazan14}), or sigmoid based preference feedback (see \cref{thm:pgdi}). Besides it also yields the optimal $O(\log \frac{1}{\epsilon})$ convergence rate for $\sign$ feedback $(p=0)$ (see Theorem $7$ in \cite{SKM21}), since note our algorithm is essentially a generalization of Algrithm 2 of \cite{SKM21} which we can easily recover with the proper tuning of the algorithm parameters $\eta_k,\gamma_k,t_k$ ($k \in [k_\epsilon]$). %Please refer to Thm 10 (Pg 17) to see how our proposed algorithm can be applied for , yields the optimal convergence rate for strongly convex transfer functions.
\end{rem}

\begin{proof}[Proof of \cref{thm:epgd} (sketch)]
The complete details of the proofs can be found in \cref{app:epgd}. 
The proof of the main theorem is based on a key lemma that shows after every epoch of length $t_k$, the distance of the resulting point $\w_{k+1}$ from the optimal $\w^*$ must decrease by at least a constant fraction. The formal statement is given below:

%\red{key lemma for strong convexity} 
\begin{restatable}[Epochwise Convergence Guarantee of \ekgd]{lem}{lemepgd}
\label{lem:epgd}
Consider the problem setup of \cref{thm:epgd}. 
Then the point $\w_{k+1}$ returned by $k$-th epoch run of \ekgd~(\cref{alg:epgd}) starting form the initial point $\w_k$, satisfies: 
\[
\E[\norm{\w_{k+1}-\w^*}^2 \mid \w_k] \leq \frac{3}{4} \norm{\w_{k}-\w^*}^2,
\]
$\forall k \in [k_\epsilon]$. Where the expectation is taken over the randomness of the algorithm and the dueling feedback received inside the run of \kgd$\big(\w_{k},\eta_k, \gamma_k,t_k\big)$.
\end{restatable}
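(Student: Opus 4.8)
The plan is to prove \cref{lem:epgd} by a one-step analysis inside a single epoch that runs exactly parallel to the proof of \cref{lem:pgd}, replacing the hypothesis ``$f(\w_t)-f(\w^*)>\epsilon$'' there by its strong-convexity counterpart. Fix an epoch $k$, condition on $\w_k$, and carry along as an inductive invariant (base case $\norm{\w_1-\w^*}\le D=D_1$ from \cref{alg:epgd}) that $\norm{\w_k-\w^*}\le D_k$; write $\Phi_t:=\norm{\w_t-\w^*}^2$ for the iterates of the inner \kgd$(\w_k,\eta_k,\gamma_k,t_k)$ call and set an epoch-local target $\epsilon_k:=c_0\alpha D_k^2$ for a small absolute constant $c_0$. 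For a round $t$ with $f(\w_t)-f(\w^*)>\epsilon_k$ I would rerun the chain \cref{eq:ub1orig}--\cref{eq:lb4orig} verbatim, making two substitutions: where the convex proof invoked $f(\w_t)-f(\w^*)>\epsilon$ and \cref{lem:gradf_lb} to get $\E_{\u_t}[\abs{f(\w_t+\gamma\u_t)-f(\w_t-\gamma\u_t)}]\ge 2\tc\gamma\epsilon/(\sqrt d D)-\beta\gamma^2$, I would instead use the strong-convexity (PL-type) bound $\norm{\nabla f(\w_t)}^2\ge 2\alpha(f(\w_t)-f(\w^*))>2\alpha\epsilon_k$ together with \cref{lem:normgrad} to get $\E_{\u_t}[\abs{f(\w_t+\gamma_k\u_t)-f(\w_t-\gamma_k\u_t)}]\ge 2\tc\gamma_k\norm{\nabla f(\w_t)}/\sqrt d-\beta\gamma_k^2>2\tc\gamma_k\sqrt{2\alpha\epsilon_k}/\sqrt d-\beta\gamma_k^2$; and in \cref{eq:may20} I would replace $f(\w_t)-f(\w^*)-\beta\gamma^2\ge\epsilon-\beta\gamma^2$ by $f(\w_t)-f(\w^*)-\beta\gamma_k^2>\epsilon_k-\beta\gamma_k^2$. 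The tuning $\gamma_k=\tc\alpha D_k/(2\beta\sqrt d)$ in \cref{alg:epgd} is precisely what makes the smoothing bias $\beta\gamma_k^2$ a small fraction of $\epsilon_k$ (this pins down the admissible range of $c_0$), so both substituted quantities are positive and of order $\alpha D_k^2$ and $\alpha^{3/2}D_k^2/(\beta\sqrt d)$ respectively; monotonicity of $\rhoo'$ on $\R_+$ (\cref{assump:tf}) then lower-bounds the relevant $\rho'$-factor by $c_\rho p\,(\Theta(\alpha^2 D_k^2/\beta d))^{p-1}$. Collecting the resulting powers of $\alpha,\beta,D_k,d,\tc$ shows that $\eta_k=BD_k^{2p+1}$ with $B$ as in \cref{alg:epgd} --- in particular its factors $\tfrac{2}{\alpha+\beta}$ and $(\alpha^2/4\beta)^p$ --- is exactly the step size that turns the one-step expansion \cref{eq:ub1orig} (with $\norm{\g_t}=1$) into
\begin{align*}
\E_t[\Phi_{t+1}]\le\Phi_t-\eta_k^2\qquad\text{whenever }f(\w_t)-f(\w^*)>\epsilon_k.
\end{align*}
The same computation in fact shows the drift also contributes a genuine $\Theta(\Phi_t)$ contraction coming from $f(\w_t)-f(\w^*)\ge\tfrac\alpha2\Phi_t$; I would keep whichever of the additive or multiplicative forms makes the counting below cleanest.

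Next I would chain the one-step bound over the $t_k=1/(2B^2 D_k^{4p})$ rounds of the epoch, using the dichotomy from the proof of \cref{thm:pgd}. Either $f(\w_t)-f(\w^*)>\epsilon_k$ at every round $t\in[t_k]$, in which case iterating the display from $\Phi_1=\norm{\w_k-\w^*}^2\le D_k^2$ gives $\E[\Phi_{t_k+1}\mid\w_k]\le D_k^2-t_k\eta_k^2=\tfrac12 D_k^2\le\tfrac34 D_k^2$; or the suboptimality gap drops to at most $\epsilon_k$ at some round, whereupon strong convexity (the third property of \cref{lem:prop_a}) gives $\norm{\w-\w^*}^2\le 2\epsilon_k/\alpha=2c_0 D_k^2$, which for $c_0<\tfrac38$ already lies inside the target ball of squared radius $\tfrac34 D_k^2=D_{k+1}^2$. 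Taking $c_0$ in the (nonempty) window determined by this together with the positivity requirement above (e.g.\ $c_0=\tfrac18$), both branches deliver $\E[\norm{\w_{k+1}-\w^*}^2\mid\w_k]\le\tfrac34 D_k^2$; since the inner parameters are all scaled to $D_k$ and $\norm{\w_k-\w^*}\le D_k$ by the invariant, this is the operative contraction, which both matches \cref{lem:epgd} and propagates the invariant $\norm{\w_{k+1}-\w^*}\le D_{k+1}$ to the next epoch (and, with $k_\epsilon=\lceil\log_{4/3}(\beta D^2/2\epsilon)\rceil$ epochs, yields \cref{thm:epgd} via $\beta$-smoothness and the geometric sum $\sum_k t_k=O(1/(B^2\epsilon^{2p}))$).

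The step I expect to be the main obstacle is the bookkeeping in the second branch of the dichotomy: once $f(\w_t)-f(\w^*)$ first reaches $\epsilon_k$ at some round $\tau<t_k$, the inner loop keeps running and the point actually returned is $\w_{t_k+1}$, not $\w_\tau$, so I must rule out the iterate drifting back out of $\cB(\w^*,\sqrt{3/4}\,D_k)$ before the epoch ends. Unlike in the smooth convex case one cannot finesse this by ``stopping on first success'', so I would argue it through a supermartingale / optional-stopping estimate on $\Phi_t$: outside the $\epsilon_k$-sublevel set the one-step bound above gives negative drift (the $\Theta(\Phi_t)$ mean-reversion term is what prevents escape), inside it the crude inequality $\E_t[\Phi_{t+1}]\le\Phi_t+\eta_k^2$ holds (using $\E_t[\g_t\dotp(\w_t-\w^*)]\ge -O(\gamma_k^2)$), and the total ``diffusion budget'' over an epoch is at most $t_k\eta_k^2=\tfrac12 D_k^2$; balancing drift against diffusion at the boundary of the sublevel set keeps $\E[\Phi_{t_k+1}\mid\w_k]$ below $\tfrac34 D_k^2$. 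A secondary, purely mechanical point is verifying that the exponents of $\alpha,\beta,d,D_k,\tc$ generated by the substitution collapse to the closed forms of $\eta_k,\gamma_k,t_k$ in \cref{alg:epgd}; this is a bounded-degree identity in $p$ that I would simply track rather than rederive here.
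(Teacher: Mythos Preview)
Your plan is essentially correct and lands on the right one-step contraction followed by a two-case dichotomy, but the paper takes a slightly different (and somewhat simpler) route in two places.

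First, the paper does \emph{not} rerun the convex chain \eqref{eq:ub1orig}--\eqref{eq:lb4orig} with a function-value threshold $\epsilon_k=c_0\alpha D_k^2$ and PL. Instead it lower-bounds the inner product $\big(\nabla f(\w_t+\gamma\u_t)+\nabla f(\w_t-\gamma\u_t)\big)\dotp(\w_t-\w^*)$ directly in terms of $\norm{\w_t-\w^*}^2$ via the $\alpha$-$\beta$ coercivity inequality (\cref{lem:prop_ab}); this is precisely where the factor $\tfrac{2}{\alpha+\beta}$ inside $B$ arises. Your PL-based substitution works and yields the same scaling in $\alpha,\beta,d,D_k$, but the constants will not collapse \emph{exactly} to the $B$ of \cref{alg:epgd}; your claim that they do is optimistic and would at best yield a $B'$ of the same order.

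Second, the paper's dichotomy is on distance, not on function values: either $\norm{\w_t-\w^*}\ge D_k/2$ for all $t\in[t_k]$ (in which case the drift bound $\E_t[\Phi_{t+1}]\le\Phi_t-\eta_k^2$ summed over $t_k$ rounds gives $\E[\Phi_{t_k+1}]\le D_k^2-t_k\eta_k^2=D_k^2/2$), or there is a first $\tau$ with $\norm{\w_\tau-\w^*}\le D_k/2$. In the latter branch the paper uses exactly the crude observation you sketch at the end---$\E_t[\Phi_{t+1}]\le\Phi_t+\eta_k^2$---summed from $\tau$ to $t_k$, so that $\E[\Phi_{t_k+1}]\le D_k^2/4+t_k\eta_k^2=3D_k^2/4$. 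No optional-stopping or mean-reversion argument is needed; the entire ``diffusion budget'' $t_k\eta_k^2=D_k^2/2$ is spent, and the starting point $D_k^2/4$ absorbs it. Running the split on $\norm{\w_t-\w^*}$ rather than on $f(\w_t)-f(\w^*)$ is what makes this one line and avoids the extra passage through $\norm{\w-\w^*}^2\le 2\epsilon_k/\alpha$ and the attendant choice of $c_0$.
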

The main part of the proof of \cref{lem:epgd} follows along the similar line of argument as of \cref{lem:pgd}, however we need to carefully apply the properties of $\alpha$ strong-convexity of $f$ in order to achieve the improved convergence rates. The complete details can be found \cref{app:lemepgd}. Given \cref{lem:epgd}, claim of \cref{thm:epgd} now follows from the following \textbf{epoch-wise recursion argument: }

Let $\cH_{[k]}:= \{(\w_{k'})_{k'\in [k_\epsilon]},(\u_{t'},o_{t'})_{t' \in [\sum_{k'= 1}^{k}t_{k'}]}\} \cup \{\w_{k+1}\}$ denotes the complete history till the end of epoch $k$ starting from the first epoch $\forall k \in [k_\epsilon]$. 

Further, let us denote by $\cH_{k}:= \{\w_{k},(\u_{t'},o_{t'})_{t' \in [\sum_{k'= 1}^{k-1}t_{k'}+1,\sum_{k'= 1}^{k}t_{k'}]}\} \cup \{\w_{k+1}\}$ be the history only within epoch $k$. 

\textbf{Proof of Correctness. } From \cref{lem:epgd}, note we have already established 

\[\E_{\cH_k}[\norm{\w_{k+1}-\w^*}^2 \mid \w_k] \leq {\frac{3}{4}}\norm{\w_{k}-\w^*}^2.
\]% with $\#$ steps $O(\frac{1}{2C^2{(3/4)^{2}}^{2p}})$ time steps

Applying the argument iteratively over $E$ epochs, and the law of iterated expectations, we have:
\begin{align}
\label{eq:may19aorig}
\E_{\cH_{[E]}}[\norm{\w_{E+1}-\w^*}^2 ] \leq (\nicefrac{3}{4})^E\norm{\w_{1}-\w^*}^2.
\end{align}

Thus choosing $E = \lceil \log_{4/3} (\frac{\beta D^2}{2\epsilon}) \rceil$, where $\norm{\w_1 - \w^*} \leq D$,
we have 
$%\begin{align*}
E \geq \log_{4/3} (\frac{\beta \norm{\w_1 - \w^*}^2}{2\epsilon})
	~
	\implies 
	~
	(\nicefrac{3}{4})^{E}\norm{\w_1 - \w^*}^2 \leq \frac{2\epsilon}{\beta}.
$%\end{align*}

Thus from \eqref{eq:may19aorig}, we get: 
$
\E_{\cH_{[E]}}[\norm{\w_{E+1}-\w^*}^2 ] \leq \frac{2\epsilon}{\beta},
$
and further applying $\beta$-smoothness of $f$, we get:
\begin{align*}
\E_{\cH_{[E]}}[f(\w_{E+1})-f(\w^*) ] \leq \E_{\cH_{[E]}}[\frac{\beta}{2}\norm{\w_{E+1}-\w^*}^2 ] \leq \epsilon,
\end{align*}
which proves the correctness of \cref{alg:epgd} for the choice of total number of epochs $k_{\epsilon}=E$.

\textbf{Proof of Sample Complexity. } In order to verify that \cref{alg:epgd} indeed converges to an $\epsilon$-optimal point in $O(\epsilon^{-2p})$ sample complexity, note we simply need to count the total sample complexity incurred in the $k_\epsilon$ epochwise runs of \kgd\, (see Line \#5 of \cref{alg:epgd}). However, by design of \ekgd\,(\cref{alg:epgd}), since \kgd$\big(\w_{k},\eta_k, \gamma_k,t_k\big)$ is run for only $t_k = \frac{1}{2B^2{(D_k^{2})}^{2p}}$ iterations, the total sample complexity of \ekgd\, becomes:

\begin{align*}
\sum_{k = 1}^{k_\epsilon} t_k 
	&= \frac{1}{2B^2}\sum_{k = 1}^{k_\epsilon} \frac{1}{{(D_k^{2})}^{2p}} 
	= \frac{1}{2B^2 (D^2)^{2p}}\bigg( 1 + \frac{1}{(\nicefrac{3}{4})^{2p}} + \frac{1}{((\nicefrac{3}{4})^{2p})^2} + \cdots + \frac{1}{((\nicefrac{3}{4})^{2p})^{k_\epsilon-1}} \bigg)
	\\
	& = \frac{1}{2B^2 (D^2)^{2p}}\frac{(\nicefrac{4}{3}^{2p})^{k_\epsilon} -1}{\nicefrac{4}{3}^{2p}-1} \leq  \frac{1}{4B^2 (D^2)^{2p}}\Big( (\nicefrac{\beta D^2}{2 \epsilon})^{2p} -1 \Big) = O\Big(\frac{1}{B^2\epsilon^{2p}}\Big)
\end{align*}
where the last inequality is since $k_\epsilon = \lceil \log_{4/3} (\frac{\beta D^2}{2\epsilon}) \rceil$ by definition. 
Thus follows the claimed sample complexity of \ekgd\, in \cref{thm:epgd} and this concludes the proof.
\end{proof}

\iffalse%%%%%%%%%%%%%%%%%%%%%%%%%%
\begin{restatable}[Convergence Rate for Special Transfer Functions.]{thm}{ethmpgdi}
\label{thm:epgdi}
\cref{alg:epgd} yields optimal convergence rates ($T_\epsilon$) for some special class of well-defined transfer functions, e.g. 
\begin{enumerate}[nosep]
\item Linear transfer functions $\rho(x) = c_\rho x, ~\forall x \in \R_+$, then we have $T = \frac{2 d^{2}\beta \holdon^{2}}{c_\rho^2\epsilon^{3}}$ %\cite{sahatewari}
\item Sign transfer functions $\rho(x) = \sign(x), ~\forall x \in \R_+$, then we get $T = O\Big( \frac{d D \beta}{\epsilon}\Big)$ %\cite{SKM21}
\item Sigmoid transfer functions $\rho(x) = \frac{1-e^{-\omega x}}{1+e^{-\omega x}}, ~\forall x \in \R_+$, $\omega > 0$, then we have $T = O\Big(\frac{d^{2}\beta \holdon^{2}}{c_\rho^2\epsilon^{3}}\Big)$ %\cite{sahatewari} \cite{Yue+09}
\end{enumerate}
\end{restatable}
\fi%%%%%%%%%%%%%%%%%%%%%%%%%%%%%%%

%\input{analysis_3.tex}

%\input{alg_smth.tex} 

%\input{analysis_smth.tex}

%!TEX root = nips20_sleepingDB.tex
\vspace{-7pt}
\section{Conclusion and Perspective}
\label{sec:concl}
\vspace{-6pt}
We consider the problem of convex optimization under a general class of pairwise preferences (dueling) feedback. 
Note the primary difficulty towards designing an efficient algorithm for the purpose lies in the fact that we can not hope to estimate the gradient of $f$ for any general dueling/pairwise preference model. Thus we can not apply the standard \emph{gradient descent} based techniques to address this problem.
We get around with the difficulty by estimating a $p$-th order proxy of the gradient, called \pgd. The crux of the idea lies in designing \kgd\, based algorithm (\cref{alg:pgd}), which is a generalized notion of gradient descent based optimization technique. Using this we design an efficient algorithm with convergence rate of $\smash{\widetilde O}(\epsilon^{-4p})$ for a smooth convex objective function, and an optimal rate of $\smash{\widetilde O}(\epsilon^{-2p})$ when the objective is smooth and strongly convex.

\paragraph{Future work.} Although the derived convergence rate for the strongly convex setting is information theoretically tight, the exact convergence lower bound is unclear for the class of smooth functions, which might be an interesting problem to pursue independently. Another open problem is to analyze this problem beyond the smoothness assumption.
Considering  a regret minimization objective instead of the optimization perspective, as well as understanding the information theoretic regret performance limit would be interesting direction as well. 
One can also consider generalizing the optimization framework to subsetwise preferences, instead of just pairwise (dueling) feedback. It might also be useful to extend our setup for contextual scenarios, adversarial preferences or non-stationary function sequences and understand the scopes of feasible solutions as well as the impossibility results.% Indeed the direction being nearly unexplored, the scope of potential future directions are really huge.

\section*{Acknowledgments} 

This project has received funding from the European Research Council (ERC) under the European Union’s Horizon 2020 research and innovation program (grant agreement No.\ 882396), the Israel Science Foundation (grant numbers 993/17; 2549/19), Tel Aviv University Center for AI and Data Science (TAD), the Len Blavatnik and the Blavatnik Family foundation, and the Yandex Initiative for Machine Learning at Tel Aviv University.

\newpage

\bibliographystyle{plainnat}
\bibliography{dueling-refs,opt-refs}

\appendix

\onecolumn{
	
\section*{\centering\Large{Supplementary: \papertitle}}
% \vspace*{1cm}
	
\allowdisplaybreaks
	
\section{Useful Results (used in \cref{sec:algo,sec:analysis_smooth,sec:analysis_strong})}
\label{app:link2grad}	
	
\begin{restatable}[]{lem}{lemngrad}
\label{lem:normgrad}	
For a given vector $\g \in \R^d$ and a random unit vector $\u$ drawn uniformly from $\cS_d(1)$, we have
\begin{align*}
    \E_{\u}[\abs{\g \dotp \u} ]
    =
    \frac{\tc {\norm{\g}}}{\sqrt{d}} 
    ,
\end{align*}
for some universal constant $\tc \in [\tfrac{1}{20},1]$. 
\end{restatable}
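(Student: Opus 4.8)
The plan is to reduce to a one-dimensional computation using the rotational symmetry of the uniform measure on $\cS_d(1)$, and then bound a single moment. First, since the law of $\u$ is invariant under orthogonal transformations, we may assume without loss of generality that $\g = \norm{\g}\,\e_1$, so that $\E_\u[\abs{\g\dotp\u}] = \norm{\g}\,\E_\u[\abs{u_1}]$; it therefore suffices to show $\E_\u[\abs{u_1}] = \tc/\sqrt d$ for some universal $\tc \in [\tfrac1{20},1]$. The upper bound is immediate: by symmetry $\E_\u[u_i^2]$ does not depend on $i$, and since $\sum_{i=1}^d u_i^2 = 1$ almost surely we get $\E_\u[u_1^2] = 1/d$; Jensen's inequality then gives $\E_\u[\abs{u_1}] \le (\E_\u[u_1^2])^{1/2} = 1/\sqrt d$, i.e. $\tc \le 1$.

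For the lower bound I would use the standard Gaussian representation: if $\z = (z_1,\dots,z_d) \sim N(\0,\I_d)$, then $\u \stackrel{d}{=} \z/\norm{\z}$, and moreover the direction $\z/\norm{\z}$ is independent of the norm $\norm{\z}$ (the joint density of $(\norm{\z},\z/\norm{\z})$ factors, since the Gaussian density depends on $\z$ only through $\norm{\z}$). Hence $\abs{z_1} = \abs{u_1}\,\norm{\z}$ with the two factors on the right independent, so $\E[\abs{z_1}] = \E_\u[\abs{u_1}]\cdot\E[\norm{\z}]$, which rearranges to $\E_\u[\abs{u_1}] = \E[\abs{z_1}]/\E[\norm{\z}] = \sqrt{2/\pi}\big/\E[\norm{\z}]$. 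Finally $\E[\norm{\z}] \le (\E[\norm{\z}^2])^{1/2} = \sqrt d$ by Jensen, yielding $\E_\u[\abs{u_1}] \ge \sqrt{2/(\pi d)} \ge \tfrac1{20}\cdot\tfrac1{\sqrt d}$, which gives $\tc \ge \tfrac1{20}$ and completes the claim.

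The computation is entirely routine and there is no genuine obstacle; the only point worth stating carefully is the independence of $\z/\norm{\z}$ from $\norm{\z}$ for an isotropic Gaussian, which is what lets the expectation factor. If one prefers to avoid that fact, an alternative route for the lower bound is Hölder's inequality in the form $\E_\u[u_1^2] = \E_\u[\abs{u_1}^{2/3}\abs{u_1}^{4/3}] \le (\E_\u[\abs{u_1}])^{2/3}(\E_\u[u_1^4])^{1/3}$, which gives $\E_\u[\abs{u_1}] \ge (\E_\u[u_1^2])^{3/2}/(\E_\u[u_1^4])^{1/2}$; combined with $\E_\u[u_1^2]=1/d$ and $\E_\u[u_1^4]=3/(d(d+2))$ this already yields $\E_\u[\abs{u_1}] \ge 1/(\sqrt3\,\sqrt d)$. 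Either way, the constant $\tfrac1{20}$ in the statement is far from tight and is recorded only for concreteness, so I would simply present the cleanest of these arguments.
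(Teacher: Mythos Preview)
Your proof is correct. The reduction via rotational invariance and the upper bound by Cauchy--Schwarz are exactly what the paper does. The lower bound, however, is handled differently: the paper also passes to the Gaussian representation $u_1 = v_1/\norm{\v}$, but instead of invoking independence of the direction and the norm, it argues via a tail estimate --- bounding $\Pr(\abs{u_1} \geq \tfrac{1}{4\sqrt d})$ from below by a union bound on the events $\{\abs{v_1} \geq 1/\sqrt d\}$ and $\{\norm{\v} \leq 4\}$ (using the Gaussian CDF for the first and Markov on $\norm{\v}^2$ for the second), and then converts this into $\E[\abs{u_1}] \geq \tfrac{1}{20\sqrt d}$. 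Your route through $\E[\abs{z_1}] = \E[\abs{u_1}]\cdot \E[\norm{\z}]$ is shorter and yields the sharp constant $\sqrt{2/\pi}$ rather than $1/20$; the H\"older alternative you sketch is also clean and avoids the Gaussian machinery entirely. Either of your arguments is preferable to the paper's for this lemma, though of course the paper only needs some universal constant and records $1/20$ for concreteness.
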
	
	
\begin{proof}
Without loss of generality we can assume $\norm{\g} = 1$, since one can divide by $\norm{\g}$ in both side of Lem. \ref{lem:normgrad} without affecting the claim.
Now to bound $\E[\abs{\g \dotp \u}]$, note that since $\u$ is drawn uniformly from $\cS_d(1)$, by rotation invariance this equals $\E[\abs{u_1}]$.
For an upper bound, observe that by symmetry $\E[u_1^2] = \tfrac1d \E[\sum_{i=1}^d u_i^2] = \tfrac1d$ and thus
\begin{align*}
    \E[\abs{u_1}]
    \leq
    \sqrt{\E[u_1^2]}
    =
    \frac{1}{\sqrt{d}}
    .
\end{align*}
We turn to prove a lower bound on $\E[\abs{\g \dotp \u}]$.
If $\u$ were a Gaussian random vector with i.i.d.~entries $u_i \sim \cN(0,1/d)$, then from standard properties of the (truncated) Gaussian distribution we would have gotten that $\E[\abs{u_1}] = \sqrt{2/\pi d}$. 
For $\u$ uniformly distributed on the unit sphere, $u_i$ is distributed as $v_1/\norm{\v}$ where $\v$ is Gaussian with i.i.d.~entries $\cN(0,1/d)$.
We then can write
\begin{align*}
    \Pr\brk*{ \abs{u_1} \geq \frac{\epsilon}{\sqrt{d}} }
    & =
    \Pr\brk*{ \frac{\abs{v_1}}{\norm{\v}} \geq \frac{\epsilon}{\sqrt{d}} }
    \geq
    \Pr\brk*{ \abs{v_1} \geq \frac{1}{\sqrt{d}} \mbox{ and } \norm{\v} \leq \frac{1}{\epsilon} }
    \\
    & \geq
    1 - \Pr\brk*{ \abs{v_1} < \frac{1}{\sqrt{d}}} - \Pr\brk*{\norm{\v} > \frac{1}{\epsilon} }
    .
\end{align*}
%\red{Means the lower bound on $\gamma$ holds only with probability $(1-0.7-1/16) \approx 0.25$???}. 
Since $\sqrt{d} v_1$ is a standard Normal, we have
\begin{align*}
    \Pr\brk*{ \abs{v_1} < \frac{1}{\sqrt{d}}}
    =
    \Pr\brk*{ -1 < \sqrt{d} v_1 < 1}
    =
    2\Phi(1) - 1
    \leq
    0.7
    ,
\end{align*}
and since $\E[\norm{\v}^2] = 1$ an application of Markov's inequality gives
\begin{align*}
    \Pr\brk!{\norm{\v} > \frac{1}{\epsilon}}
    =
    \Pr\brk!{\norm{\v}^2 > \frac{1}{\epsilon^2}}
    \leq
    \epsilon^2 \E[\norm{\v}^2]
    =
    \epsilon^2
    .
\end{align*}
For $\epsilon=\tfrac14$ this implies that $\Pr\brk!{ \abs{u_1} \geq 1/4\sqrt{d} } \geq \tfrac15$, whence
$
    \E[\abs{\g \dotp \u}]
    =
    \E[\abs{u_1}]
    \geq
    1/20\sqrt{d}
    .
$
\end{proof}
	
\begin{lem}
	\label{lem:fkm}
Let $g : \R^d \to \R$ be differentiable and let $\u$ be a random unit vector in $\R^d$.
Then
\begin{align*}
    \E[g(\u)\u] 
    = 
    \frac1d \E[\nabla g(u)]
    .
\end{align*}
\end{lem}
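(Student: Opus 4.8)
\textbf{Proof proposal for \cref{lem:fkm}.}

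The plan is to reduce the claim to the divergence theorem (Stokes' theorem) applied on the unit ball $\cB_d(1)$. The natural object is the vector field $F(\x) = g(\x)\x$, whose divergence is $\nabla \dotp F(\x) = g(\x)\, (\nabla \dotp \x) + \x \dotp \nabla g(\x) = d\, g(\x) + \x \dotp \nabla g(\x)$. That does not quite match what we want, so instead I would work coordinate-by-coordinate. Fix an index $i$ and apply the divergence theorem to the field $H_i(\x) = g(\x)\e_i$ (the $i$-th standard basis vector scaled by $g$): its divergence is $\partial_i g(\x)$, and the outward unit normal on $\cS_d(1)$ at $\x$ is simply $\x$, so the boundary integrand is $g(\x)\,(\e_i \dotp \x) = g(\x)\, x_i$. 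The divergence theorem then gives
\begin{align*}
    \int_{\cB_d(1)} \partial_i g(\x)\, d\x
    =
    \int_{\cS_d(1)} g(\x)\, x_i \, dS(\x)
    .
\end{align*}
Writing $S_{d-1} = \mathrm{vol}(\cS_d(1))$ for the surface area and $V_d = \mathrm{vol}(\cB_d(1))$ for the volume, and dividing both sides by $S_{d-1}$, the right-hand side becomes exactly $\E_{\u}[g(\u)\, u_i]$, the $i$-th coordinate of $\E[g(\u)\u]$.

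Next I would handle the left-hand side: $\frac{1}{S_{d-1}}\int_{\cB_d(1)} \partial_i g(\x)\, d\x = \frac{V_d}{S_{d-1}}\, \E_{\x \sim \cB_d(1)}[\partial_i g(\x)]$, where the expectation is now over $\x$ drawn uniformly from the \emph{ball}. Using the standard identity $S_{d-1} = d\, V_d$ (which itself follows from the divergence theorem, or from differentiating $r^d V_d$), this equals $\frac{1}{d}\, \E_{\x \sim \cB_d(1)}[\partial_i g(\x)]$. Collecting coordinates, $\E_{\u}[g(\u)\u] = \frac1d\, \E_{\x \sim \cB_d(1)}[\nabla g(\x)]$. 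I would then note that this matches the statement of the lemma provided one reads the right-hand side expectation as being over the unit ball; since the paper's applications only ever invoke this identity with the ball-expectation of the gradient (this is precisely the FKM-style estimator), I would either state the lemma with that convention made explicit or add a one-line remark clarifying which distribution the gradient is averaged against.

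The main obstacle is really a bookkeeping/regularity one rather than a conceptual one: making the divergence theorem legitimately applicable requires $g$ to be $C^1$ (or at least $C^1$ on a neighborhood of the closed ball, or Lipschitz with the identity holding a.e.), which the hypothesis "differentiable" may not literally guarantee at the level of integrability of $\nabla g$. I would dispense with this by assuming $g$ is continuously differentiable (as is the case in all our uses, where $g$ is built from the smooth objective $f$ and the differentiable transfer function $\rho$), or alternatively by an approximation argument: mollify $g$ to $g_\delta \to g$ uniformly on the closed ball with $\nabla g_\delta \to \nabla g$ pointwise and dominated, prove the identity for each $g_\delta$, and pass to the limit on both sides. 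The only other point to be careful about is the normalization constants $S_{d-1} = d V_d$ and the change of variables between the sphere-surface measure and the uniform measure on $\cS_d(1)$, but these are routine.
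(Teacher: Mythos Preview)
Your proposal is correct and in fact sharper than the paper's own proof. The paper does not prove the identity from scratch: it simply invokes Lemma~1 of \cite{Flaxman+04} (the FKM one-point estimator), which states $\E_{\u}[f(\x+\gamma\u)\u] = \tfrac{\gamma}{d}\nabla \E[f(\x+\gamma\u)]$, and then specializes via $\x=0$ and $f(\z)=g(\z/\gamma)$. Your divergence-theorem argument is precisely the underlying proof of that FKM lemma, so you are essentially unpacking the cited black box rather than taking a genuinely different route.

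What your version buys is twofold. First, it is self-contained and makes explicit the geometric content (Stokes on $\cB_d(1)$ plus $S_{d-1}=dV_d$). Second, and more importantly, you correctly flag that the expectation on the right-hand side is over the \emph{ball} $\cB_d(1)$, not the sphere. The paper uses the same symbol $\u$ on both sides without distinguishing, which is imprecise: read literally with $\u$ uniform on $\cS_d(1)$ on both sides, the identity is false (already in $d=1$: $\tfrac12(g(1)-g(-1))$ vs.\ $\tfrac12(g'(1)+g'(-1))$). As you note, the downstream uses of the lemma are all of FKM type and only need the ball-averaged gradient, so the imprecision is harmless in context, but your remark clarifying the distribution is a genuine improvement. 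Your regularity caveat ($C^1$ or mollification) is also appropriate and handled reasonably.
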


\begin{proof}
The above claim follows from Lemma 1 of \cite{Flaxman+04} which shows that for any differentiable function $f : \R^d \to \R$ and any $\x \in \R^d$,
\begin{align*}
    \E[ f(\x+\gamma \u) \u ]
    =
    \frac{\delta}{d} \nabla \E[ f(\x+\gamma \u) ]
    =
    \frac{\delta}{d} \E[ \nabla f(\x+\gamma \u) ]
    .
\end{align*}
Fix $\x=0$ and substitute $f(\z) = g(\tfrac1\gamma \z)$.
Then $\nabla f(\z) = \frac1\gamma \nabla g(\frac1\gamma \z)$, and we obtain:
\begin{align*}
    \E[ g(\u) \u ]
    =
    \frac{1}{d} \E[ \nabla g(\u) ]
    .
\end{align*}
\end{proof}

\begin{restatable}[]{lem}{gradflb}
	\label{lem:gradf_lb}
	Suppose $f: \cD \mapsto \R$ is a convex function for some convex set $\cD \subseteq \R^d$ such that for any $\x,\y \in \R^d$, $f(\x) - f(\y) > \epsilon$. Then this implies $\| \nabla f(\x)\| > \frac{\epsilon}{\|\x - \y\|}$. 
	Further, assuming $D: = \max_{\x, \y \in \R^d}\|\x-\y\|_2$, we get $\| \nabla f(\x)\| > \frac{\epsilon}{D}$ for any $\x \in \cD$.
\end{restatable}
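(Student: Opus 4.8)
\textbf{Proof plan for \cref{lem:gradf_lb}.}
The plan is to derive the bound directly from the first-order (subgradient) characterization of convexity together with the Cauchy--Schwarz inequality; no smoothness is needed here. I read the hypothesis ``$f(\x)-f(\y)>\epsilon$'' as applying to a fixed pair $\x,\y\in\cD$ (this is how the lemma is invoked, with $\x=\w_t$ and $\y=\w^*$), and the conclusion is a lower bound on $\norm{\nabla f(\x)}$ in terms of that pair.

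First I would invoke convexity of $f$ to write, for the given $\x,\y$,
\[
  f(\y) \;\geq\; f(\x) + \nabla f(\x)\dotp(\y-\x),
\]
which rearranges to $f(\x)-f(\y)\leq \nabla f(\x)\dotp(\x-\y)$. Next I would apply Cauchy--Schwarz to the right-hand side to get $\nabla f(\x)\dotp(\x-\y)\leq \norm{\nabla f(\x)}\,\norm{\x-\y}$. Chaining these with the hypothesis $f(\x)-f(\y)>\epsilon$ yields $\norm{\nabla f(\x)}\,\norm{\x-\y} > \epsilon$, and since $\x\neq\y$ (else the hypothesis would be violated as $f(\x)-f(\y)=0$), dividing by $\norm{\x-\y}>0$ gives $\norm{\nabla f(\x)} > \epsilon/\norm{\x-\y}$, which is the first claim.

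For the second claim, I would simply use $\norm{\x-\y}\leq D$ with $D=\max_{\x,\y\in\cD}\norm{\x-\y}_2$ the diameter, so that $\epsilon/\norm{\x-\y}\geq \epsilon/D$, and therefore $\norm{\nabla f(\x)}>\epsilon/D$ for any $\x\in\cD$ satisfying the hypothesis (in particular, taking $\y=\w^*$ whenever $f(\x)-f(\w^*)>\epsilon$). There is no real obstacle in this argument; the only point deserving a word of care is ensuring $\x\neq\y$ so that the division is legitimate, which is immediate from the strict inequality in the hypothesis.
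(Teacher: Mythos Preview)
Your proposal is correct and is essentially identical to the paper's own proof: the paper also applies convexity to get $\epsilon < f(\x)-f(\y)\leq \nabla f(\x)\dotp(\x-\y)$, then Cauchy--Schwarz, and divides through. The only (minor) addition in your write-up is the explicit justification that $\x\neq\y$ so the division is valid, which the paper leaves implicit.
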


\begin{proof}
	The proof simply follows using convexity of $f$ as:
	%\vspace{-10pt}
	\begin{align*}
		f(\x) - f(\y) > \epsilon & \implies \epsilon < f(\x) - f(\y) \le \nabla f(\x)(\x - \y) \le \| \nabla f(\x)\|_2 \|\x - \y\|_2 \\
		& \implies \| \nabla f(\x)\| \ge \frac{\epsilon}{\|\x - \y\|}.
	\end{align*}
	\vspace{-20pt}
\end{proof}

%%%%%%%%%%%%%%%%%%%%%%%%%%

\section{Appendix for \cref{sec:algo}}
\label{app:sec3}

\begin{restatable}[Estimation of \pgd\, from Dueling Feedback from general transfer function $\rho$]{lem}{estpgd}
	\label{lem:estpgd}
	Consider any general admissible transfer function $\rho: \R  \mapsto [-1,1]$ (that satisfies \cref{assump:tf}). 
	Then if $f: \cD \mapsto \R$ is $L$-Lipschitz and $\beta$-smooth, given any $\w \in \cD$ such that $r \geq \frac{2L \norm{\nabla f(\w)}}{\beta \sqrt d}$, $o \sim \mathrm{Ber}^{\pm}\big( \rho\big( f(\w + \gamma\u) - f(\w - \gamma\u) \big) \big)$ where $\u \sim \text{Unif}(\cS_{d}(1))$, and suitably tuned step-size $\gamma > 0$:
	\begin{align*}
		\E_{\u,o}&[o \u]\dotp(\w-\w^*) 
		\geq 
		\begin{cases}
		C(d,\beta,c_\rho,p,r,L) \norm{\nabla f(\w)}^{2p-1} \nabla \tf(\w) \dotp (\w-\w^*), 
		&\text{ for any } p \geq 1 ,
		\\
		 \frac{4\gamma c_\rho}{d}\nabla \tf(\w) \dotp (\w-\w^*), 
		 &\text{ for } p = 1.
	%	\\		
	%	\frac{1}{40 \sqrt d} \frac{\nabla f(\w)\dotp(\w-\w^*)}{\norm{\nabla f(\w)}}	
	%	&\text{ for } p = 0.
		\end{cases}
	\end{align*}
Here $\nabla \tf(\w)$ denotes the gradient of the smoothed function $\tf$ at $\w$, where $\tf(\w):= \E_{\u \sim \text{Unif}(\cS_{d}(1))}[f(\w + \gamma \u)]$, and 
$C(d,\beta,c_\rho,p,r,L)$ is a constant dependent on the problem parameters $d,\beta,c_\rho,p,r,L$.
\end{restatable}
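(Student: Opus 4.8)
The plan is to compute $\E_{\u,o}[o\u]$ exactly, lower-bound its inner product with $(\w - \w^*)$ by invoking the monotonicity bound on $\rho'$ from \cref{assump:tf}-(ii), and then relate the resulting quantity to $\norm{\nabla f(\w)}^{2p-1}\nabla\tf(\w)\dotp(\w-\w^*)$. First I would use the tower rule: since $o \sim \mathrm{Ber}^{\pm}(\rho(f(\w+\gamma\u)-f(\w-\gamma\u)))$, we have $\E_o[o\mid\u]=\rho(f(\w+\gamma\u)-f(\w-\gamma\u))$, so
\[
\E_{\u,o}[o\u] = \E_{\u}\brk[s]!{\rho\brk!{f(\w+\gamma\u)-f(\w-\gamma\u)}\u}.
\]
Then, exactly as in the proof sketch of \cref{thm:pgd}, I would apply the antisymmetry of $\rho$ together with \cref{lem:fkm} (with $g(\u)=\rho(f(\w+\gamma\u)-f(\w-\gamma\u))$) — noting that the gradient of the integrand is $\rho'(\cdot)\gamma(\nabla f(\w+\gamma\u)+\nabla f(\w-\gamma\u))$ — to rewrite
\[
\E_{\u,o}[o\u] = \frac{\gamma}{d}\,\E_{\u}\brk[s]!{\rho'\brk!{\abs{f(\w+\gamma\u)-f(\w-\gamma\u)}}\brk!{\nabla f(\w+\gamma\u)+\nabla f(\w-\gamma\u)}}.
\]

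Next I would handle the two regimes. For the general $p\ge 1$ case: dot with $(\w-\w^*)$, and use convexity plus $\beta$-smoothness (as in the derivation of \eqref{eq:may20}) to bound the $\brk!{\nabla f(\w\pm\gamma\u)}\dotp(\w-\w^*)$ terms below by roughly $f(\w)-f(\w^*)-\beta\gamma^2$, which in turn I would relate back to $\nabla\tf(\w)\dotp(\w-\w^*)$ — indeed by \cref{lem:fkm} applied to $f$ itself, $\nabla\tf(\w) = \frac{d}{\gamma}\E_{\u}[f(\w+\gamma\u)\u]/\gamma$-type identities give $\tf$'s gradient as an average of $\nabla f(\w\pm\gamma\u)$ over the ball, so the average of the two shifted gradients is comparable to $\nabla\tf(\w)$ up to smoothness error. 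For the factor $\rho'(\abs{\cdots})$, I would first show $\abs{f(\w+\gamma\u)-f(\w-\gamma\u)}$ is at least of order $\gamma\norm{\nabla f(\w)}/\sqrt d$ (via \cref{lem:normgrad} and smoothness, as in \cref{thm:pgd}), then use the condition $r\ge \frac{2L\norm{\nabla f(\w)}}{\beta\sqrt d}$ together with an appropriate choice of $\gamma$ to ensure this quantity stays in $(0,r)$, so that \cref{assump:tf}-(ii) applies and $\rho'(\abs{\cdots})\ge c_\rho p\,\abs{\cdots}^{p-1}\gtrsim c_\rho p (\gamma\norm{\nabla f(\w)}/\sqrt d)^{p-1}$. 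Collecting the $\gamma$ powers and the $\sqrt d$ factors into the constant $C(d,\beta,c_\rho,p,r,L)$ yields the stated bound $C\cdot\norm{\nabla f(\w)}^{2p-1}\nabla\tf(\w)\dotp(\w-\w^*)$ — here the exponent $2p-1$ arises as $(p-1)$ from $\rho'$ plus $1$ from the implicit $\norm{\nabla f(\w)}$ hiding in the $\nabla\tf(\w)\dotp(\w-\w^*)\approx (f(\w)-f(\w^*))$ comparison, which itself scales like $\norm{\nabla f(\w)}$ via \cref{lem:gradf_lb}-type reasoning. For $p=1$, $\rho'(x)\ge c_\rho$ directly (no $\abs{x}^{p-1}$ penalty), so after dividing out the lower bound $\rho'\ge c_\rho$ and using $\nabla f(\w\pm\gamma\u)$ averaging to $\nabla\tf(\w)$, one gets the cleaner $\frac{4\gamma c_\rho}{d}\nabla\tf(\w)\dotp(\w-\w^*)$ (the constant $4$ absorbing the various smoothness slacks).

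The main obstacle I expect is twofold. First, making the passage from $\E_\u[\rho'(\abs{\cdots})(\nabla f(\w+\gamma\u)+\nabla f(\w-\gamma\u))]\dotp(\w-\w^*)$ to a clean multiple of $\nabla\tf(\w)\dotp(\w-\w^*)$ rigorous: the factor $\rho'(\abs{\cdots})$ is $\u$-dependent and correlated with the gradient term, so one cannot simply pull it out of the expectation — the fix is to bound $\rho'(\abs{\cdots})$ below by a \emph{deterministic} quantity using the uniform lower bound $\abs{f(\w+\gamma\u)-f(\w-\gamma\u)}\ge \tfrac{2\tc\gamma\norm{\nabla f(\w)}}{\sqrt d} - \beta\gamma^2$ valid for \emph{every} $\u$ on the sphere (not just in expectation), which requires a per-$\u$ argument rather than the in-expectation bound used in \cref{thm:pgd}, and then this may lose a factor but keeps the structure. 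Second, verifying the domain condition: one must check that for the tuned $\gamma$, the argument $\abs{f(\w+\gamma\u)-f(\w-\gamma\u)}$ never exits $(-r,r)$ — its \emph{upper} bound is $\approx 2\gamma L + \beta\gamma^2 \le 2L\gamma$ (for small $\gamma$) by Lipschitzness, and one needs this $\le r$; choosing $\gamma$ proportional to $\norm{\nabla f(\w)}/(\beta\sqrt d)$ and invoking the hypothesis $r\ge \frac{2L\norm{\nabla f(\w)}}{\beta\sqrt d}$ closes exactly this gap, which is presumably why that specific hypothesis appears. The remaining bookkeeping — tracking all powers of $\gamma$, $d$, and the smoothness/Lipschitz constants into the single constant $C(d,\beta,c_\rho,p,r,L)$ — is routine but tedious.
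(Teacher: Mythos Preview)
Your opening steps (tower rule, \cref{lem:fkm}, antisymmetry of $\rho$) and your treatment of the $p=1$ case match the paper's proof essentially verbatim. The gap is in the $p\ge 1$ case, precisely at the point you flag as the ``main obstacle.''

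Your proposed fix --- a \emph{uniform} lower bound $\abs{f(\w+\gamma\u)-f(\w-\gamma\u)} \ge \tfrac{2\tc\gamma\norm{\nabla f(\w)}}{\sqrt d} - \beta\gamma^2$ valid for \emph{every} $\u\in\cS_d(1)$ --- does not exist. The constant $\tc/\sqrt d$ in \cref{lem:normgrad} governs only $\E_\u[\abs{\u\dotp\nabla f(\w)}]$; pointwise, $\abs{\u\dotp\nabla f(\w)}$ can be zero (take $\u\perp\nabla f(\w)$), so the difference $\abs{f(\w+\gamma\u)-f(\w-\gamma\u)}$ can be $O(\beta\gamma^2)$ rather than $\Theta(\gamma\norm{\nabla f(\w)}/\sqrt d)$. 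Hence you cannot make $\rho'(\abs{\cdots})$ deterministic this way.

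The paper's actual decoupling device is a \emph{high-probability} bound, not a uniform one: it invokes Lemma~4 of \cite{SKM21} to show $\P_\u\brk!{\abs{\u^\top\nabla f(\w)}\ge\beta\gamma}\ge 1-\lambda$ with $\lambda\le\tfrac12$ once $\gamma\le\norm{\nabla f(\w)}/(10\beta\sqrt d)$. On this event $\abs{f(\w+\gamma\u)-f(\w-\gamma\u)}\ge\beta\gamma^2$, whence $\rhoo'(\abs{\cdots})\ge\rhoo'(\beta\gamma^2)=c_\rho p(\beta\gamma^2)^{p-1}$, which is now a constant independent of $\u$ and pulls out of the expectation cleanly. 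The remaining $\E_\u[\nabla f(\w+\gamma\u)+\nabla f(\w-\gamma\u)]$ is then exactly $2\nabla\tf(\w)$ --- no detour through $f(\w)-f(\w^*)$ is needed or taken. This also explains the exponent: the full $\norm{\nabla f(\w)}^{2p-1}$ arises because the tuned $\gamma$ is proportional to $\norm{\nabla f(\w)}$ and the prefactor is $\gamma\cdot\gamma^{2(p-1)}=\gamma^{2p-1}$; your accounting of ``$(p-1)+1$'' undercounts by $p-1$. Relatedly, your proposed route of first passing to $f(\w)-f(\w^*)$ and then ``relating back'' to $\nabla\tf(\w)\dotp(\w-\w^*)$ runs the inequality the wrong way (convexity gives $\nabla\tf(\w)\dotp(\w-\w^*)\ge\tf(\w)-\tf(\w^*)$, not the reverse), so that detour would not close either.
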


\iffalse%%%%%%%%%%%%%%%

\begin{proof}[Proof sketch of \cref{lem:estpgd}]
To give an overall idea on the proof, given $\w$, using \cref{lem:fkm} it can be shown that:
\begin{align*}
\E_{\u,o}[o\u] & = \E_{\u}[ \rho\big( f(\w + \gamma\u) - f(\w - \gamma\u) \big)\u] \\
& =
\frac{\gamma}{d}\E_{\u}\brk[s]!{ \rho'\big({ f(\w+\gamma\u) - f(\w-\gamma\u) } \big) \big( \nabla f(\w+\gamma\u) + \nabla f(\w-\gamma\u) \big) }.
\end{align*}
%where $\rhoo$ is the \pprox \, of the original transfer function $\rho$ $(p>0)$ (see the derivation of \cref{eq:lb1} to understand the details).
%
Now the first term $\big({ f(\w_t+\gamma\u_t) - f(\w_t-\gamma\u_t) } \big)$ roughly behaves like $\nabla f(\w_t)^\top \u_t$, for small enough $\gamma$ (owing to Taylor series), which further based on the results of \cref{lem:normgrad} and \cref{assump:tf}-$(ii)$ can argued to be roughly $O(\norm{\nabla f(\w)})^{p-1}$.
%
On the other hand, the second term $\big( \nabla f(\w+\gamma\u) + \nabla f(\w-\gamma\u) \big)$ approximately equates to $\nabla f(\w)$. %\footnote{More precisely, $\nabla \tf(\w)$ which denotes the gradient of the smoothed function $\tf$ at $\w$, where $\tf(\w):= \E_{\u \sim \text{Unif}(\cS_{d}(1))}[f(\w + \gamma \u)]$ \citep{Flaxman+04}}. 
The complete proof is given in \ref{app:algo}. 
\end{proof}

\fi%%%%%%%%%%%%%%%%%%%%%%%%

\subsection{Proof of \cref{lem:estpgd}}
\label{app:algo}

\estpgd*

\begin{proof}
We start by noting that given $\w$ and denoting $\g = o\u$:
\begin{align*}
\E_{\u,o}[\g] & =
	\E_{\u}[\E_{o}[o\u \mid \u]] 
	\\
	& = 
	\E_{\u}[ \rho\big( f(\w + \gamma\u) - f(\w - \gamma\u) \big)\u] 
\end{align*}

Let us start with the case for $p\geq 1$. Note since $\rho$ is differentiable by assumption (see \cref{assump:tf}-i) as well as $f$, using \cref{lem:fkm} in the first equality, we get:
 \begin{align}
 	\label{eq:lb9_1}
     \E_{\u} &\brk[s]!{ \rho\brk{ f(\w + \gamma \u) - f(\w - \gamma \u) } \u \dotp (\w-\w^*) } \nonumber
     \\
     &= \nonumber 
     \frac{\gamma}{d}\E_{\u}\brk[s]!{ \rho'\big( f(\w+\gamma\u) - f(\w-\gamma\u) \big) \big( \nabla f(\w+\gamma\u) + \nabla f(\w-\gamma\u) \big) } \dotp (\w-\w^*) \\
     &\geq 
     \frac{2\gamma}{d}\E_{\u}\brk[s]!{ \rhoo'\big( \abs{f(\w+\gamma\u) - f(\w-\gamma\u) }\big)\big( \nabla f(\w+\gamma\u) + \nabla f(\w-\gamma\u) \big)  } \dotp (\w-\w^*)     
     ,
 \end{align}
where the second equality follows since $\rho'(-x) = \rho'(x)$ by the anti-symmetry of $\rho$ (see \cref{assump:tf}-(i)). 
%, and the inequality is due to the Taylor series expansion of $ \nabla f(\w+\gamma\u) + \nabla f(\w-\gamma\u)$. \red{argument not okay but we should be able to prove this somehow?}
%where the inequality $(a)$ follows by the definition of $\rhoo$ which is \pprox\, of $\rho$ (see \cref{rem:rhoo_justified} below for a formal justification).   
  %
 Note in \cref{eq:lb9_1}, we further lower bounded $\rho'$ by $\rhoo'$ using \cref{assump:tf}-(ii) along with the observation that $\nabla f(\w + \gamma \u) \dotp (\w-\w^*)    $ is positive for any $\u \in \cS_d(1)$ by first order optimality conditions.

It is important to note that, to ensure $r$-proximity to the origin, as needed in \cref{assump:tf}-(ii), it needed to satisfy $\abs{ f(\w+\gamma\u) - f(\w-\gamma\u) } \leq r$, which we ensured by choosing $\gamma \leq \frac{r}{2L}$ (as recall by assumption $f$ is $L$-lipschitz).
%Moreover, using convexity of $\rhoo'$ we get:

\textbf{Case: $p=1$. } Note in this case from \cref{eq:lb9_1} we get:
\begin{align*}
     \E_{\u} &\brk[s]!{ \rho\brk{ f(\w + \gamma \u) - f(\w - \gamma \u) } \u \dotp (\w-\w^*) } \nonumber
     \\
     &\geq 
     \frac{2\gamma}{d}\E_{\u}\brk[s]!{ c_\rho \big( \nabla f(\w+\gamma\u) + \nabla f(\w-\gamma\u) \big)  } \dotp (\w-\w^*)     
     \\
     & = \frac{2\gamma c_\rho}{d}\E_{\u}\brk[s]!{\big( \nabla f(\w+\gamma\u) + \nabla f(\w-\gamma\u) \big)  } \dotp (\w-\w^*) 
     \\
     & = \frac{2\gamma c_\rho}{d}\big( \nabla \E_{\u}\brk[s]!{ f(\w+\gamma\u)} + \nabla \E_{\u}\brk[s]!{ f(\w-\gamma\u )} \big) \dotp (\w-\w^*) 
     = \frac{4\gamma c_\rho}{d}{ \nabla \tf(\w)  } \dotp (\w-\w^*) 
     ,
 \end{align*}
where the inequality follows since when $p=1$, $\rhoo'(x) = c_\rho$ (independent of $x, \, \forall x \in \R$). The last equality follows by exchanging expectation and derivative (since $f$ is differentiable and finite valued by assumptions).

\textbf{Case: $p\geq 1$. } Now lets consider the case for any general $p \geq 1$: Now for the first term in \cref{eq:lb9_1}, again applying the $\beta$-smoothness of $f$ we get:
\begin{align*}
 	\gamma \u \dotp \nabla f(\w) - \tfrac12 \beta \gamma^2  &\leq  f(\w+ \gamma \u) - f(\w) \leq \gamma \u \dotp \nabla f(\w) + \tfrac12 \beta \gamma^2 \\
 	- \gamma \u \dotp \nabla f(\w) - \tfrac12 \beta \gamma^2  
 	&\leq f(\w- \gamma \u) - f(\w)\leq - \gamma \u \dotp \nabla f(\w) + \tfrac12 \beta \gamma^2 .
 \end{align*}
Subtracting the inequalities, we get
\begin{align*}
 	& \abs*{ f(\w+ \gamma \u) - f(\w-  \gamma \u) - 2 \gamma \u \dotp \nabla f(\w) } \leq  \beta \gamma^2 \\
 	\implies 
 	& 2 \gamma \u \dotp \nabla f(\w) -  \beta \gamma^2 \leq f(\w+ \gamma \u) - f(\w-  \gamma \u)  \leq 2 \gamma \u \dotp \nabla f(\w) +  \beta \gamma^2
 	.
\end{align*}
Note above implies:
$
\abs*{ f(\w+ \gamma \u) - f(\w-  \gamma \u)} \geq \abs{2 \gamma \u \dotp \nabla f(\w) } - \beta \gamma^2.
$
%\end{align*} 
Now using Lem.~4 of \cite{SKM21}, we know that: 
$$
\P_\u\big( \abs{\u^\top \nabla f(\w)} \geq \beta \gamma \big) \geq 1-\lambda, \text{ where } \lambda = 
\inf_{\gamma' > 0}\bigg\{ \gamma' + \frac{2\beta\gamma \sqrt{d \log (1/\gamma')}}{\norm{\nabla f(\w)}} \bigg\}
%
%\frac{3\beta \gamma}{\norm{\nabla f(\x)}}\sqrt{d \log \frac{\norm{\nabla f(\x)}}{\sqrt d \beta \gamma}},
$$
for any $\w \in \R^d$. Note choosing $\gamma' = \frac{\beta \gamma \sqrt{d}}{\norm{\nabla f(\w)}}$, we get $\lambda \leq \frac{\beta \gamma \sqrt{d}}{\norm{\nabla f(\x)}}\bigg( 1 + 2\sqrt{\log \frac{\norm{\nabla f(\x)}}{\sqrt d \beta \gamma}} \bigg)$. Note if we further choose $\gamma \leq \frac{\norm{\nabla f(\w)}}{10\beta \sqrt d} $, we have $\lambda \leq 1/2$.

Thus we have:
$
\P_\u\big( \abs*{ f(\w+ \gamma \u) - f(\w-  \gamma \u)}  \geq \beta \gamma^2 \big) \geq 1-\lambda,
$
and using \cref{eq:lb9_1}, we further get:
\begin{align}
 	\label{eq:lb9_2}
     \E_{\u} &\brk[s]!{ \rho\brk{ f(\w + \gamma \u) - f(\w - \gamma \u) } \u \dotp (\w-\w^*) } \nonumber
     \\
     &\geq 
     \frac{2\gamma (1-\lambda)}{d}\E_{\u}\brk[s]!{ \rhoo'\big( \beta \gamma^2\big)\big( \nabla f(\w+\gamma\u) + \nabla f(\w-\gamma\u) \big)  } \dotp (\w-\w^*)     
     ,
\end{align}
Here it is important to note that above lower bound holds since (a) from first order optimality conditions for any $\u \in \cS_d(1)$, 
$f(\w+\gamma\u)\dotp (\w-\w^*) > 0$, and also
(b) $\rhoo'\big( x \big) > 0$ for any $x > 0$ by definition of $\rhoo'$. 

Thus from \cref{eq:lb9_2} we further get:
\begin{align}
 	\label{eq:lb9_3}
     \E_{\u} &\brk[s]!{ \rho\brk{ f(\w + \gamma \u) - f(\w - \gamma \u) } \u \dotp (\w-\w^*) } \nonumber
     \\
     &\geq \nonumber
     \frac{2\gamma (1-\lambda)\rhoo'\big( \beta \gamma^2\big)}{d}\E_{\u}\brk[s]!{ \big( \nabla f(\w+\gamma\u) + \nabla f(\w-\gamma\u) \big)} \dotp (\w-\w^*)  
     \\
     & = 
      \frac{2c_\rho p(1-\lambda)\gamma^{2p-1} \beta^{p-1}}{d}\big( \nabla \tilde f(\w+\gamma\u) + \nabla \tilde f(\w-\gamma\u) \big) \dotp (\w-\w^*)       
     ,
\end{align}

The result now follows by noting $\rhoo'(x) = c_\rho p x^{p-1} \text{ for any } x \in \R_+$, $\nabla \tf(\w) = \E_{\u \sim \text{Unif}(\cS_d(1))}[\nabla f(\w + \gamma \u)]$ and choosing $\gamma= \min\Big(\frac{r}{2L},\frac{\norm{\nabla f(\w)}}{10\beta \sqrt d}\Big)$. That concludes the proof for any $p \geq 1$. % for the choice of with $C(d,\beta,c_\rho,p) = \frac{c_\rho p \beta^{p-1}}{d}$. 
% 
%The claim for $p=0$ simply follows from Lemma $4$ of \cite{SKM21} for the choice of ** $\gamma = \frac{\norm{\nabla f(\w)}}{\beta \sqrt d}$.
\end{proof}	

\begin{rem}
It is also worth noting that, for $p=0$, $c_\rho = 1$, our transfer function recovers the $\sign$ feedback of \cite{SKM21}. In this case it can be shown that:

\begin{align*}
		\E_{\u,o}&[o \u]\dotp(\w-\w^*) 
		\geq 		
		\frac{1}{40 \sqrt d} \frac{\nabla f(\w)\dotp(\w-\w^*)}{\norm{\nabla f(\w)}},	
\end{align*}
which ssentially recovers the normalized gradient estimate (or the direction of the gradient).

The claim for $p=0$ simply follows by consecutively applying Lemma $4$ and $3$ of \cite{SKM21} as follows: From Lemma $4$ of \cite{SKM21} we have: 
\begin{align*}
		\E_{\u}&[\sign\big(f(\w+\gamma\u)-f(\w-\gamma\u)\big) \u]\dotp(\w-\w^*) 
		\geq 
		(1-\lambda) \E_{\u}[\sign\big(\nabla f(\w) \dotp \u \big) \u]\dotp(\w-\w^*),
\end{align*}
where $\lambda$ is as defined in the Case for $p \geq 1$ above. But using Lemma $3$ of \cite{SKM21} we further get:
\begin{align*}
		\E_{\u}&[\sign\big(f(\w+\gamma\u)-f(\w-\gamma\u)\big) \u]\dotp(\w-\w^*) 
		\geq 
		\frac{1}{40 \sqrt d} \frac{\nabla f(\w)\dotp(\w-\w^*)}{\norm{\nabla f(\w)}},
\end{align*}
which concludes the claim choosing $\gamma= \min\Big(\frac{r}{2L},\frac{\norm{\nabla f(\w)}}{10\beta \sqrt d}\Big)$ (noting that $\lambda \leq 1/2$ for this choice of $\gamma$ as explained in the case for $p\geq 1$ above).
\end{rem}

%%%%%%%%%%%%%%%%%%%%%%%%%%%%%%%%%%%%%%%%%%%

\section{Appendix for \cref{sec:analysis_smooth}}
\label{app:pgd}	

\textbf{Notations. } We denote by $\cH_t$ the history $\{\w_\tau,\u_\tau,o_\tau\}_{\tau = 1}^{t-1} \cup \w_{t}$ till time $t$, for all $t \in [T]$. $\E_t[\cdot]:= \E_{o_t,\u_t}[\cdot \mid \cH_t]$ denote the expectation with respect to $\u_t,o_t$ given $\cH_t$

\subsection{Proof of \cref{lem:pgd}}
\label{app:lempgd}

\lempgd*

\begin{proof}[\textbf{Complete Proof of \cref{lem:pgd}}]

First note by our update rule,

\begin{align}
 \label{eq:eq0}
     \E_t[\norm{\tw_{t+1}-\w^*}^2] 
     &=
     \E_t[\norm{\w_{t}-\w^*}^2] - 2{\eta} \E_t\brk[s]{ \g_t \dotp (\w_t-\w^*) } + \eta^2 \E_t\norm{\u_t}^2 \nonumber
     \\
     &=
     \E_t[\norm{\w_{t}-\w^*}^2] - 2\eta \E_t[\brk[s]{ \g_t \dotp (\w_t-\w^*) }] + \eta^2 
     .
 \end{align}

But since projection to $\cD$ reduces distance from $\w^*$ we have: %, denoting  $\w_{t+1}' = \arg\min_{\w \in \cD}\norm{\w - \tw_{t+1}}$, we have 

 \[
 \norm{\w_{t+1}-\w^*}^2 \leq \norm{\tw_{t+1}-\w^*}^2
 \]

This further implies:
 \[
 \norm{\w_{t+1}-\w^*}^2 \leq \norm{\w_{t+1}'-\w_{t+1}}^2 + \norm{\w_{t+1}'-\w^*}^2 \leq \gamma^2 + \norm{\tw_{t+1}-\w^*}^2.
 \]

% %
 Applying above in \cref{eq:eq0} we get: %\tk{should be an inequality}
 
 \begin{align}
 \label{eq:ub1}
     \E_t[\norm{\w_{t+1}-\w^*}^2] 
     &\leq 
     \E_t[\norm{\w_{t}-\w^*}^2] - 2\eta \E_t[\brk[s]{ \g_t \dotp (\w_t-\w^*) }] + \eta^2 \nonumber
     \\
     &=
     \norm{\w_{t}-\w^*}^2 - 2\eta \E_t[\brk[s]{ \g_t \dotp (\w_t-\w^*) }] + \eta^2 
     .
 \end{align}

On the other hand, since both $f$ and $\rho$ is convex (by assumption), using \cref{lem:fkm} we get:
\begin{align}
	\label{eq:lb1}
    & \E_t [\g_t \dotp (\w_t-\w^*)] 
    = 
    \E_{\u_t} \brk[s]!{ \rho\brk{ f(\w_t + \gamma \u_t) - f(\w_t - \gamma \u_t) } \u_t \dotp (\w_t-\w^*) \mid \cH_t } \nonumber
    \\
    &= 
    \E_{\u_t} \brk[s]!{ {\rho\brk{ f(\w_t + \gamma \u_t) - f(\w_t - \gamma \u_t) }} \cdot {\u_t } \mid \cH_t } \dotp (\w_t-\w^*) \nonumber
   % \\
   % &\geq
   % \E_{\u_t} \brk[s]!{ \abs!{\rhoo\brk{ f(\w_t + \gamma \u_t) - f(\w_t - \gamma \u_t) }} \cdot \abs!{\u_t \dotp (\w_t-\w^*)} \mid \cH_t } \nonumber
   % \\
   % &=
   % \E_{\u_t} \brk[s]!{ \rhoo\brk{ f(\w_t + \gamma \u_t) - f(\w_t - \gamma \u_t) } \cdot \u_t \dotp (\w_t-\w^*) \mid \cH_t } \nonumber
    \\
    &= \nonumber 
    \frac{\gamma}{d}\E_{\u_t}\brk[s]!{ \rho'\big( {f(\w_t+\gamma\u_t) - f(\w_t-\gamma\u_t)} \big) \big( \nabla f(\w_t+\gamma\u_t) + \nabla f(\w_t-\gamma\u_t) \big) \mid \cH_t } \dotp (\w_t-\w^*)
    \\
    &= 
    \frac{\gamma}{d}\E_{\u_t}\brk[s]!{ \rho'\big( \abs{f(\w_t+\gamma\u_t) - f(\w_t-\gamma\u_t)} \big) \big( \nabla f(\w_t+\gamma\u_t) + \nabla f(\w_t-\gamma\u_t) \big) \mid \cH_t } \dotp (\w_t-\w^*)
    ,
\end{align}
where the last equality follows since  $\rho(-x) = -\rho(x)$ (see \cref{assump:tf}-i). 
 Now, since $f$ is convex and $\beta$-smooth, we have that:
  
 \begin{align*}
     \nabla f(\w_t+\gamma\u_t) \dotp (\w_t-\w^*)
     &\geq
     f(\w_t+\gamma\u_t) - f(\w^*+\gamma\u_t) 
     \\
     &\geq
     f(\w_t) - f(\w^*) + \nabla f(\w_t) \dotp (\gamma \u_t) - \nabla f(\w^*) \dotp (\gamma \u_t) - \beta \norm{\gamma \u_t}^2 
     \\
     &=
     f(\w_t) - f(\w^*) + \gamma (\nabla f(\w_t)  - \nabla f(\w^*)) \dotp \u_t - \beta \gamma^2
     .
 \end{align*}

Likewise, for the term $\nabla f(\w_t-\gamma\u_t) \dotp (\w_t-\w^*)$ we get,

 \begin{align*}
     \nabla f(\w_t-\gamma\u_t) \dotp (\w_t-\w^*)
     &\geq
     f(\w_t-\gamma\u_t) - f(\w^*-\gamma\u_t) 
     \\
     &\geq
     f(\w_t) - f(\w^*) - \nabla f(\w_t) \dotp (\gamma \u_t) + \nabla f(\w^*) \dotp (\gamma \u_t) - \beta \norm{\gamma \u_t}^2 
     \\
     &=
     f(\w_t) - f(\w^*) - \gamma (\nabla f(\w_t)  - \nabla f(\w^*)) \dotp \u_t - \beta \gamma^2
     .
 \end{align*}

Summing, we thus get:
 \begin{align} \label{eq:eq1}
     \big( \nabla f(\w_t+\gamma\u_t) + \nabla f(\w_t-\gamma\u_t) \big) \dotp (\w_t-\w^*)
     \geq
     2(f(\w_t) - f(\w^*)) - 2\beta \gamma^2
     .
 \end{align}

Now since by \cref{assump:tf}-(2) we have $\rho'(x)> 0$ for any $x \in (0,r]$, we can claim that 
$$\rho'\big( \abs{f(\w_t+\gamma\u_t) - f(\w_t-\gamma\u_t) }\big) > 0; $$

see \cref{rem:rhoo_justified} for a formal justification (on choices of $\gamma$ to satisfy $\abs{f(\w_t+\gamma\u_t) - f(\w_t-\gamma\u_t) } \leq r$).   
%
%\iffalse%%%%%%%%%%%%%%
\begin{rem}
\label{rem:rhoo_justified}
Recall from \cref{assump:tf}, there exists some constants$p,r,c_\rho>0$ such that $\abs{\rho(x)} \geq \abs{\rhoo(x)}$ forall $x \in [-r,r]$. Clearly we apply inequality $(a)$ for $x = f(\w_t + \gamma \u_t) - f(\w_t - \gamma \u_t)$. Now to justify indeed $\abs{f(\w_t + \gamma \u_t) - f(\w_t - \gamma \u_t)} \leq r$, we note that since $f$ is $\beta$-smooth, 
it is also locally-lipschitz inside the bounded domain $\cD$ and suppose $L$ is the resulting lipschitz constant. 
Then we have $\abs{f(\w_t + \gamma \u_t) - f(\w_t - \gamma \u_t)} \leq 2L\gamma$,
 and to ensure the above condition, we can assume $2L\gamma \leq r$ by choosing small enough $\gamma$. But since we set $\gamma = \frac{\tc \epsilon}{\beta D \sqrt d}$, note the constraints are satisfied for any $\epsilon \leq \frac{r \beta D \sqrt d}{2 L \tc}$.
\end{rem}
%\fi %%%%%%%%%%%%%%%

%
For simplicity, let us denote $\E_{\u_t}[\cdot]:= \E_{\u_t}[\cdot \mid \cH_t]$. 
Recall, since $\rho'\big( \abs{f(\w_t+\gamma\u_t) - f(\w_t-\gamma\u_t) }\big) > 0$ by \cref{assump:tf}, now using \cref{eq:eq1,eq:lb1}, we can write: 

\begin{align}
\label{eq:apr}
     \nonumber \E_{t}& [\g_t \dotp (\w_t-\w^*)] 
     = 
     \frac{\gamma}{d}\E_{\u_t}\brk[s]!{ \rho'\big(\abs{ f(\w_t+\gamma\u_t) - f(\w_t-\gamma\u_t) } \big) \big( \nabla f(\w_t+\gamma\u_t) + \nabla f(\w_t-\gamma\u_t) \big) \dotp (\w_t-\w^*) }
     \\
     &
     \geq \nonumber
     \frac{2\gamma}{d} \E_{\u_t}\brk[s]!{ \rho'\big( \abs{ f(\w_t+\gamma\u_t) - f(\w_t-\gamma\u_t) } \big) \brk!{ f(\w_t) - f(\w^*) - \beta \gamma^2 } }
	     \\
     &
     \geq  \nonumber
     \frac{2\gamma}{d} \E_{\u_t}\brk[s]!{ \rho'\big( \abs{ f(\w_t+\gamma\u_t) - f(\w_t-\gamma\u_t) } \big)} \brk!{ f(\w_t) - f(\w^*) - \beta \gamma^2 } 
     \\
      & > \nonumber
     \frac{2\gamma}{d} \E_{\u_t}\brk[s]!{ \rho'\big( \abs{f(\w_t+\gamma\u_t) - f(\w_t-\gamma\u_t)}}  \big) \brk!{\epsilon - \beta \gamma^2 }  ~\big(\text{since we conditioned on } f(\w_t) - f(\w^*) > \epsilon\big),
   	\\
   	& > 
     \frac{2\gamma}{d} \E_{\u_t}\brk[s]!{ \rho'\big( \abs{f(\w_t+\gamma\u_t) - f(\w_t-\gamma\u_t)}  \big)} {\epsilon/2}  
%     \\
   %	& \geq 
%     \frac{2\gamma}{d} \E_{\u_t}\brk[s]!{ \rho'\big( \abs{f(\w_t+\gamma\u_t) - f(\w_t-\gamma\u_t)}  \big)} {\epsilon/2}  
      ,
 \end{align}
where the second last inequality follows since we choose $\gamma = \frac{\tc \epsilon}{\beta \sqrt{d}\holdon}$ (recall from \cref{thm:pgd}). However since $\tc < 1$, note we have $\gamma \leq \frac{\epsilon}{\beta \sqrt{d}\holdon}$ and further since $D> \sqrt{\frac{2\epsilon}{\beta}}$, \footnote{Note otherwise, i.e. if $D \leq \sqrt{\frac{2\epsilon}{\beta}}$, for any point $\w \in \cD$, $f(\w)-f(\w^*) \leq \epsilon$ by \cref{lem:prop_b} and the optimization problem of \cref{thm:pgd} becomes trivial to solve} above in turn implies $\gamma < \sqrt{\frac{\epsilon}{2\beta}}$ and hence $\brk!{\epsilon - \beta \gamma^2 }> \frac{\epsilon}{2}$. Last equality is from \cref{assump:tf}-(ii) where recall that $\rhoo(x) = c_\rho\sign(x)\abs{x}^p$ forall denotes the \pprox\, of $\rho$ and hence $\rho'(x)\geq c_\rho p x^{p-1} = \rhoo'(x) $ for $x \in [0,r]$.
%\red{CHECK}
%
Now applying the $\beta$-smoothness of $f$:

\begin{align*}
 	\gamma \u \dotp \nabla f(\w) - \tfrac12 \beta \gamma^2  &\leq  f(\w+ \gamma \u) - f(\w) \leq \gamma \u \dotp \nabla f(\w) + \tfrac12 \beta \gamma^2 \\
 	- \gamma \u \dotp \nabla f(\w) - \tfrac12 \beta \gamma^2  
 	&\leq f(\w- \gamma \u) - f(\w)\leq - \gamma \u \dotp \nabla f(\w) + \tfrac12 \beta \gamma^2 .
 \end{align*}

Subtracting the inequalities, we get

\begin{align*}
 	& \abs*{ f(\w+ \gamma \u) - f(\w-  \gamma \u) - 2 \gamma \u \dotp \nabla f(\w) } \leq  \beta \gamma^2 \\
 	\implies 
 	& 2 \gamma \u \dotp \nabla f(\w) -  \beta \gamma^2 \leq f(\w+ \gamma \u) - f(\w-  \gamma \u)  \leq 2 \gamma \u \dotp \nabla f(\w) +  \beta \gamma^2
 	.
 \end{align*}

Note above implies:
 $%\begin{align*}
 \abs*{ f(\w_t+ \gamma \u_t) - f(\w_t-  \gamma \u_t)} \geq \abs{2 \gamma \u_t \dotp \nabla f(\w_t) } - \beta \gamma^2.
 $%\end{align*}
 
 Now taking expectation over $\u_t$ in both side:
 \begin{align*}
 \E_{\u_t}[\abs*{ f(\w_t+ \gamma \u_t) - f(\w_t-  \gamma \u_t)}] \geq \E_{\u_t}[\abs{2 \gamma \u_t \dotp \nabla f(\w_t) }] - \beta \gamma^2
 = 2\frac{\tc \gamma \norm{\nabla f(\w_t)} }{\sqrt{d}} - \beta \gamma^2
 \end{align*}
 where the last equality is due to \cref{lem:normgrad}. 
 Additionally, since we assumed $f(\w_t)-f(\w^*) > \epsilon$, i.e. the suboptimality gap to be at least $\epsilon$, by \cref{lem:gradf_lb} we can further derive a lower bound: 
 \begin{align*}
 \E_{\u_t}[\abs*{ f(\w_t+ \gamma \u_t) - f(\w_t-  \gamma \u_t)}] \geq \frac{2\tc \gamma \epsilon}{\sqrt{d}\norm{\w_t - \w^*}} - \beta \gamma^2 
 = {\frac{2\tc \gamma \epsilon}{\sqrt{d}D}} - \beta \gamma^2. 
 \end{align*}
 
 And now note that setting $\gamma = \frac{\tc \epsilon}{\beta \sqrt{d}\holdon}$, the right hand side is positive. Now, lower bounding $\rho'$ by $\rhoo'$ as per \cref{assump:tf}-(ii) and further applying monotonicity of $\rhoo'(\cdot)$ in the positive orthant, we get:
 \begin{align}
 \label{eq:may15a}
 \rho'(\E_{\u_t}[\abs*{ f(\w_t+ \gamma \u_t) - f(\w_t -  \gamma \u_t)}]) \geq \rho'(\E_{\u_t}[\abs*{ f(\w_t+ \gamma \u_t) - f(\w_t -  \gamma \u_t)}]) \geq \rhoo'\bigg( {\frac{2\tc \gamma \epsilon}{\sqrt{d}D}} - \beta \gamma^2 \bigg).
 \end{align}

Then from \cref{eq:apr}:
 \begin{align}
 	\label{eq:lb_lin}
 	\E_{t} & [\g_t \dotp (\w_t-\w^*)] \nonumber
 	\geq \nonumber
 	\frac{2\gamma}{d} \rhoo'\bigg( {\frac{2\tc \gamma \epsilon}{\sqrt{d}D}} - \beta \gamma^2 \bigg) \nicefrac{\epsilon}{2}
 	~~(\text{applying the bound from from } \eqref{eq:may15a})		
 	\\
 	&\geq  
 	\frac{2\gamma}{d}  \Big( c_\rho p \gamma^{p-1} \abs{\frac{2\tc \epsilon}{\sqrt{d}\holdon} -  \beta \gamma}^{p-1} \Big) \nicefrac{\epsilon}{2}
 	~~(\text{as, } \rhoo'(x) = c_\rho p x^{p-1} \text{ for any } x \in \R_+) 
 	,
 \end{align}
 Then for the above choice of $\gamma = \frac{\tc \epsilon}{\beta \sqrt{d}\holdon}$, we finally get: 

\begin{align}
 	\label{eq:lb4}
 	\E_{t} [\g_t \dotp (\w_t-\w^*)] 
 	\nonumber &\geq 
 	\frac{2\gamma \epsilon}{2d}  \big( c_\rho p \gamma^{p-1} \abs{\frac{ \tc^2\epsilon^2}{{d}\holdon^2}}^{p-1} \big) 
 	\\
 	&= 
 	\frac{pc_\rho \tc^{2p-1} \epsilon^{2p}}{d^{(2p+1)/2}\beta^p \holdon^{2p-1}}
 	,
 \end{align}

Combining \cref{eq:ub1} with \cref{eq:lb4}:

\begin{align}
    \E_t & [\norm{\w_{t+1}-\w^*}^2] 
    \leq 
    \norm{\w_{t}-\w^*}^2 - 2\eta \E_t[\brk[s]{ \g_t \dotp (\w_t-\w^*) }] + \eta^2 \nonumber
    \\
    & \leq \nonumber 
    \norm{\w_{t}-\w^*}^2
     - 2\eta \Bigg( \frac{pc_\rho \tc^{2p-1} \epsilon^{2p}}{d^{(2p+1)/2}\beta^p \holdon^{2p-1}} \Bigg) + \eta^2
     \\
    & \leq  
    \norm{\w_{t}-\w^*}^2
    - \Bigg( \frac{pc_\rho \tc^{2p-1} \epsilon^{2p}}{d^{(2p+1)/2}\beta^p \holdon^{2p-1}} \Bigg)^2, ~~\text{ setting } \eta = \Bigg( \frac{pc_\rho \tc^{2p-1} \epsilon^{2p}}{d^{(2p+1)/2}\beta^p \holdon^{2p-1}} \Bigg) \nonumber 
    ,
\end{align}
which concludes the claim of \cref{lem:pgd}.
\end{proof}

\subsection{Proof of \cref{thm:pgd}}

\thmpgd*

\begin{proof}[\textbf{Complete Proof of \cref{thm:pgd}}]
 We denote by $\cH_t$ the history $\{\w_\tau,\u_\tau,o_\tau\}_{\tau = 1}^{t-1} \cup \w_{t}$ till time $t$.
 
 We start by noting that by definition: 
 \[
 \E_{o_t}[\g_t \mid \cH_t,\u_t] = \rho(f(\w_t+\gamma \u_t) - f(\w_t - \gamma \u_t))\u_t
 \]

We proceed with the proof inductively, i.e. given $\cH_t$ and assuming (conditioning on) $f(\w_t) - f(\w^*) > \epsilon$, we can show that $\w_{t+1}$ always come closer to the minimum $\w^*$ on expectation in terms of the $\ell_2$-norm. More formally, given $\cH_t$ and assuming  $f(\w_t) - f(\w^*) > \epsilon$ we will show:

 \[
 \E_t[\norm{\w_{t+1}-\w^*}^2] \leq \norm{\w_{t}-\w^*}^2
 \]

where $\E_t[\cdot]:= \E_{o_t,\u_t}[\cdot \mid \cH_t]$ denote the expectation with respect to $\u_t,o_t$ given $\cH_t$.  %\red{hence the term in the left-hand-side of the above expression has no expectation around?? .}
 The precise statement is given by \cref{lem:pgd}. 

Given the statement of \cref{lem:pgd}, now note that by iteratively taking expectation over $\cH_T$ on both sides of \cref{eq:fin1orig} and summing over $t=1,\ldots,T$, we get,

\begin{align*}
	\E_{\cH_T} [\norm{\w_{T+1}-\w^*}^2] 
	& \leq 
	\norm{\w_{1}-\w^*}^2
	- \frac{p^2 (\tc^{2p-1} c_\rho)^2\epsilon^{4p}}{d^{2p+1}\beta^{2p} \holdon^{4p-2}}T	 
	.
\end{align*}
%\vspace{-5pt}
However, note if we set $T = \frac{d^{2p+1}\beta^{2p} \holdon^{4p}}{p^2 (\tc^{2p-1} c_\rho)^2\epsilon^{4p}}$,
this implies 
$\E_{\cH_T} [\norm{\w_{T+1}-\w^*}^2] \leq 0$, or equivalently $\w_{T+1} = \w^*$, which concludes the claim. 

To clarify further, note we show that for any run of Alg. \ref{alg:pgd} if indeed $f(\w_{t}) - f(\w^*) > \epsilon$ continues to hold for all $t = 1,2, \ldots T$, then $\w_{T+1} = \w^*$ at $T = \frac{d^{2p+1}\beta^{2p} \holdon^{4p}}{p^2 (\tc^{2p-1} c_\rho)^2\epsilon^{4p}}$. If not, there must have been a time $t \in [T]$ such that $f(\w_t) - f(\w^*) < \epsilon$. This concludes the proof with $T_\epsilon = T$.
\end{proof}

 %#################################

\subsection{Proof of \cref{thm:pgdi}}
\label{app:pgdi}

\thmpgdi*

\begin{proof}
\textbf{Case 1. ~Linear transfer functions: $\rho(x) = c_\rho x, ~\forall x \in \R_+$}. So in this case $\rho$ is the \pprox\, of itself, i.e. $\rhoo = \rho$ with $p=1$ and any $r \in \R$.
  
 Here $\rho'(x) = c_\rho$ for any $x \in \R$. 
 Then following the same steps as derived in the proof of \cref{thm:pgd}, note we have,

\begin{align*}
     \E_{t}[\g_t \dotp (\w_t-\w^*)] 
     & \geq  
     \frac{2\gamma}{d} \E_{\u_t}\brk[s]!{ \rho'\big( {f(\w_t+\gamma\u_t) - f(\w_t-\gamma\u_t)}}  \big) \brk!{ f(\w_t) - f(\w^*) - \beta \gamma^2 } 
     \\
     & = \frac{2\gamma}{d} c_\rho \brk!{ f(\w_t) - f(\w^*) - \beta \gamma^2 } 
 \end{align*}
 
Moreover, since we conditioned on $f(\w_t) - f(\w^*) > \epsilon$, plugging this in above and combining with 

\begin{align*}
     \E_t [\norm{\w_{t+1}-\w^*}^2] 
     & \leq 
     \norm{\w_{t}-\w^*}^2
      - \frac{4 \eta \gamma c_\rho}{d}\Bigg(
 	 \brk!{ f(\w_t) - f(\w^*) }  - \beta \gamma^2 \Bigg) + \eta^2
 	 \\
 	 & \leq 
     \norm{\w_{t}-\w^*}^2
      - \frac{4 \eta \gamma c_\rho}{d}\Bigg(
 	 \epsilon  - \beta \gamma^2 \Bigg) + \eta^2
 	 \\
 	 & \overset{(a)}{=} 
     \norm{\w_{t}-\w^*}^2
      - \frac{4 \eta c_\rho}{d}\big( \frac{\epsilon^{3/2}}{2\sqrt{2 \beta}} \big) + \eta^2	 
 	 \\
 	 & \overset{(b)}{=} 
     \norm{\w_{t}-\w^*}^2
      - \frac{c_\rho^2 \epsilon^3}{2\beta d^2}	 
 	 , 
 \end{align*}
 where $(a)$ follows by setting $\gamma = \sqrt{\frac{\epsilon}{2\beta }}$, and $(b)$ follows by setting $\eta= \frac{c_\rho \epsilon^{3/2}}{d \sqrt{2\beta}}$.

Then same as the proof of \cref{thm:pgd}, now iteratively taking expectations over $\cH_T$ on both sides of \cref{eq:fin1orig} and summing over $t=1,\ldots,T$, we get,

\begin{align*}
	\E_{\cH_T} [\norm{\w_{T+1}-\w^*}^2] 
	& \leq 
	\norm{\w_{1}-\w^*}^2
	- \frac{c_\rho^2 \epsilon^3}{2\beta d^2}	T	 
	.
\end{align*}
%\vspace{-5pt}
However, note if we set $T = \frac{2 d^{2}\beta \holdon^{2}}{c_\rho^2\epsilon^{3}}$,
this implies 
$\E_{\cH_T} [\norm{\w_{T+1}-\w^*}^2] \leq 0$, or equivalently $\w_{T+1} = \w^*$, which concludes the claim (similarly as line of argument we concluded the proof of \cref{thm:pgd}). 

\noindent 
\textbf{Case 2. ~Sigmoid transfer functions: $\rho(x) = \frac{1-e^{-\omega x}}{1+e^{-\omega x}}, ~\forall x \in \R_+$, $\omega > 0$}.  
In this case the it can be shown that $\rho$ can be approximated by a linear function near the origin, or more specifically, depending on the constant $\omega$, there exists $c_\rho^\omega$ and $r^\omega$ such that
\[
\abs{\rho(x)} \geq c_\rho^\omega \abs{x}  ~\forall x \in [-r^\omega,r^\omega].
\] 
The claimed convergence bound now follows similar to the analysis shown for Case $1$ above.

 \noindent 
 \textbf{Case 3. ~Sign transfer function: $\rho(x) = \sign(x), ~\forall x \in \R_+$}.
 
In this case also, $\rho$ is the \pprox\, of itself,  with $p=0$, $c_\rho = 1$ and any $r \in \R$. This particular transfer function was considered in the similar optimization setup in \cite{SKM21}. We show below how our proposed \cref{alg:pgd} (\kgd) generalizes their $\beta$-NGD algorithm and recovers their convergence rate of $O(\epsilon^{-1})$. 

We start by noting that, our algorithm generalizes the $\beta$-NGD algorithm of \cite{SKM21}. 
The convergence rate claim now follows by noting that in this case our descent direction $\g_t$ at any point $\w_t$, becomes the normalized gradient estimate, of $\w_t$ with high probability (over the random draws of $\u_t$). Roughly speaking it can be show that
\[
\E_{\u_t}[\g_t \mid \w_t] \approx \frac{\tc }{\sqrt d}\big( \nicefrac{\nabla f(\w_t)}{\norm{\nabla f(\w_t)}} \big), 
\]
using Lemma $2$ and $4$ of \cite{SKM21}, or more precisely,
\[
\E_{\u_t}[\g_t \dotp (\w_t-\w^*) \mid \w_t]
	\geq  \frac{\tc }{\sqrt d}\bigg( \frac{\nabla f(\w_t)\dotp (\w_t-\w^*)}{\norm{\nabla f(\w_t)}} \bigg) - 2\lambda  \norm{\w_t-\w^*}
	,
\]
where $\lambda=\frac{3\beta\delta}{\|\nabla f(\w_t)\|} \sqrt{d\log\frac{\|\nabla f(\w_t)\|}{\sqrt{d}\beta\delta}}$.  
 Combining the above bound in the proof of \cref{thm:pgd} (to lower bound the term $\E_t[\g_t\cdot (\w_t - \w^*)]$), the result follows. In fact in this case the proof of \cref{thm:pgd} exactly follows the same line of argument as that of the proof of Theorem $5$ of \cite{SKM21}. This shows the generalization ability of our proof analysis for different special class of transfer functions. 
\end{proof}

\section{Appendix for \cref{sec:analysis_strong}}
\label{app:epgd}	

\subsection{Proof of \cref{lem:epgd}}
\label{app:lemepgd}	

\lemepgd*

 \begin{proof}[\textbf{Complete Proof of \cref{lem:epgd}}]
 The proof relies on analyzing the epochwise performance  guarantee of any representative run \kgd$\big(\w_{k},\eta_k, \gamma_k,t_k\big)$ (see Line \#5 of \cref{alg:epgd}).

For simplicity of notations, for any epoch $k$, inside the call of \kgd$\big(\w_{k},\eta_k, \gamma_k,t_k\big)$, let us assume $\x_1 (= \w_k)$ denotes the initial point in the run of \kgd~(\cref{alg:pgd}) and let is denote by $D_0 = \norm{\x_1-\w^*}$. The goal is to analyze the guarantees on the output point $\x_{T+1}$ of the run of \pgd\, after $T = t_k$ time steps; thus $\x_{T+1} = \w_{k+1}$. We use the same notations as used in the proof of \cref{thm:pgd}. 

Recall from \cref{eq:ub1}, at any time step $t$ inside the run of \pgd\, we have: 
 \begin{align}
 \label{eq:first_ab1}
     \E_t[\norm{\x_{t+1}-\w^*}^2] 
     &\leq 
     \norm{\x_{t}-\w^*}^2 - 2\eta \E_t\brk[s]{ \g_t \dotp (\x_t-\w^*) } + \eta^2 
     ,
 \end{align}
 and on the other hand, from \cref{eq:lb1} we have:
 \begin{align}
 \label{eq:first_ab2}
     & \E_t [\g_t \dotp (\x_{t+1}-\w^*)] 
     = 
     \E_{\u_t} \brk[s]!{ \rho\brk{ f(\x_t + \gamma \u_t) - f(\x_t - \gamma \u_t) } \u_t \mid \cH_t } \dotp (\x_t-\w^*) \nonumber
     \\
     &=
     \frac{\gamma}{d}\E_{\u_t}\brk[s]!{ \rho'\big( \abs{f(\x_t+\gamma\u_t) - f(\x_t-\gamma\u_t)} \big) \big( \nabla f(\x_t+\gamma\u_t) + \nabla f(\x_t-\gamma\u_t) \big) \mid \cH_t } \dotp (\x_t-\w^*) 
     .
 \end{align}
 Now since $f$ is convex and $\beta$-smooth, applying \cref{lem:prop_ab}, we know that for any $\w \in \cD$,  
 \begin{align*}
 \big(\nabla f(\w) - \nabla f(\w^*)\big)^\top(\w-\w^*) \geq   \frac{\alpha\beta}{\alpha+\beta}\norm{\w-\w^*}^2
 + 
 \frac{1}{\alpha+\beta}\norm{\nabla f(\w) - \nabla f(\w^*)}^2.
 \end{align*}	
 Moreover since $\w^*$ is the minimizer of $f$ in $\cD$, we have $\nabla f(\w^*) \dotp (\x_t-\w^*) \geq 0$ (from first order optimality conditions, see \cref{lem:foo}). 
 %(see \cref{lem:consgrad} for a proof}. 
 Hence from above we further get: 

 \begin{align*}
 	\nabla f(\x_t+\gamma\u_t) & \dotp (\x_t + \gamma\u_t-\w^*)
 	\geq 
 	\big(\nabla f(\x_t+\gamma\u_t) - \nabla f(\w^*) \big) \dotp (\x_t + \gamma\u_t-\w^*)\\
 	& \geq   
 \frac{\alpha\beta}{\alpha+\beta}\norm{\x_t+\gamma\u_t-\w^*}^2
 + 
 \frac{1}{\alpha+\beta}\norm{\nabla f(\x_t+\gamma\u_t) - \nabla f(\w^*)}^2.
 \end{align*}

% % 
 Likewise,
 \begin{align*}
 	\nabla f(\x_t-\gamma\u_t) & \dotp (\x_t-\gamma\u_t-\w^*)
 	\geq   
 \frac{\alpha\beta}{\alpha+\beta}\norm{\x_t-\gamma\u_t-\w^*}^2
 + 
 \frac{1}{\alpha+\beta}\norm{\nabla f(\x_t-\gamma\u_t) - \nabla f(\w^*)}^2.
 \end{align*}

Combining the two, we get:
 \begin{align*} 
     & \nabla f(\x_t+\gamma\u_t)\dotp (\x_t+\gamma\u_t-\w^*) + \nabla f(\x_t-\gamma\u_t) \dotp (\x_t-\gamma\u_t-\w^*)
     \\
     & \geq
 \frac{\alpha\beta}{\alpha+\beta}(\norm{\x_t+\gamma\u_t-\w^*}^2+\norm{\x_t-\gamma\u_t-\w^*}^2)
 	\\
 	& \quad\quad\quad\quad\quad\quad + \frac{1}{\alpha+\beta}(\norm{\nabla f(\x_t+\gamma\u_t) - \nabla f(\w^*)}^2+\norm{\nabla f(\x_t-\gamma\u_t) - \nabla f(\w^*)}^2)
 	\\
 	& \overset{(a)}{\geq}
 \frac{\alpha\beta}{\alpha+\beta}(\norm{\x_t+\gamma\u_t-\w^*}^2+\norm{\x_t-\gamma\u_t-\w^*}^2) 
 	+ \frac{\alpha}{\alpha+\beta}(\norm{\x_t+\gamma\u_t-\w^*}^2+\norm{\x_t-\gamma\u_t-\w^*}^2)  
 	\\
 	& =
 \frac{2(\alpha\beta + \alpha)}{\alpha+\beta}(\norm{\x_t-\w^*}^2+\gamma^2\norm{\u_t}^2) = \frac{2(\alpha\beta + \alpha)}{\alpha+\beta}(\norm{\x_t-\w^*}^2+\gamma^2)	
 	,
 \end{align*}
 where the inequality $(a)$ follows due to \cref{lem:prop_a}. Thus we can write: 

\begin{align} 
 \label{eq:eq12}
     \nabla f(\x_t+ & \gamma\u_t)\dotp (\x_t-\w^*) + \nabla f(\x_t-\gamma\u_t) \dotp (\x_t-\w^*) \nonumber
     \\
     & \geq  \nonumber 
  \frac{2(\alpha\beta + \alpha)}{\alpha+\beta}(\norm{\x_t-\w^*}^2+\gamma^2)	- \gamma (\nabla f(\x_t+\gamma\u_t)-\nabla f(\x_t-\gamma\u_t))\dotp \u_t
     \\
     & \overset{(b)}{\geq} \nonumber
     \frac{2(\alpha\beta + \alpha)}{\alpha+\beta}(\norm{\x_t-\w^*}^2+\gamma^2)- \gamma \norm{\nabla f(\x_t+\gamma\u_t)-\nabla f(\x_t-\gamma\u_t)}
 	\\
 	& \geq 
 	\frac{2(\alpha\beta + \alpha)}{\alpha+\beta}(\norm{\x_t-\w^*}^2+\gamma^2)- 2\beta \gamma^2 \nonumber
 	\\
 	& = \frac{2\alpha(\beta + 1)}{\beta(\tkappa+1)}\norm{\x_t-\w^*}^2
 	- \frac{2(\beta - \tkappa)}{\tkappa+1}\gamma^2	
 	,
 \end{align}
 where $(b)$ follows from Cauchy-Schwarz, the last inequality holds due to \cref{lem:prop_b}, and $\tkappa:=\frac{\alpha}{\beta}$.

Recall we assumed $\norm{\x_1-\w^*} = D_0$. Let $T (=t_k)$ be the sample complexity of the run \kgd$\big(\w_k,\eta_k, \gamma_k,t_k\big)$, i.e. the \kgd\, runs for $T$ time-steps starting from the initial point $\x_1 = \w_k$. 

\textbf{Case analysis 1: (Assume $\norm{\x_t - \w^*}\geq D_0/2$ for all $t = [T]$). } In this case, by assumption, $\norm{\x_t-\w^*} \geq D_0/2$. So from above we further get: 

\begin{align*}
 \E_{\u_t}[\abs*{ f(\x_t+ \gamma \u_t) - f(\x_t-  \gamma \u_t)}] \geq \frac{2\tc \alpha \gamma D_0 }{2\sqrt{d}} - \beta \gamma^2. 
 \end{align*}
 And now note that setting $\gamma = \frac{\tc \alpha D_0}{2\beta \sqrt{d}}$, the right hand side is positive. 
 Further using \cref{assump:tf}-(3) and by the definition of $\rhoo$, we get:
 \begin{align}
 \label{eq:may15}
\E_{\u_t}[\rho'(\abs*{ f(\x_t+ \gamma \u_t) - f(\x_t -  \gamma \u_t)})] & \geq  \nonumber 
 \E_{\u_t}[\rhoo'(\abs*{ f(\x_t+ \gamma \u_t) - f(\x_t -  \gamma \u_t)})] \\
 & \geq  \nonumber
 \rhoo'(\E_{\u_t}[\abs*{ f(\x_t+ \gamma \u_t) - f(\x_t -  \gamma \u_t)}]) \\
 & \geq \rhoo'\bigg( \frac{2\tc \alpha \gamma D_0 }{2\sqrt{d}} - \beta \gamma^2 \bigg) = \rhoo'\bigg( \frac{\tc^2 \alpha^2 D_0^2 }{4\beta d} \bigg).
 \end{align}
where the second and the third inequalities are respectively due to Jensen's inequality, as by definition $\rhoo'$ is convex (for any $p \geq 1$) in the positive orthant and also monotonically increasing.  
 Further by denoting $A = \frac{\alpha(\beta + 1)}{\beta(\tkappa+1)}$ and $A'= \frac{(\beta - \tkappa)}{\tkappa+1}$, note that for this choice of $\gamma$ we have 
 \begin{align*}
 A\norm{\x_t-\w^*}^2	- A'\gamma^2	 
 	& > 
 	\frac{\alpha(\beta + 1)}{\beta(\tkappa+1)}\frac{D_0^2}{4} - \frac{(\beta - \tkappa)}{\tkappa+1} \bigg(\frac{\tc \alpha D_0}{2\beta \sqrt{d}} \bigg)^2
 	\\
 	& \overset{(c)}{\geq} 
 	\frac{\alpha(\beta + 1)}{\beta(\tkappa+1)}\frac{D_0^2}{4} - \frac{(\beta - \tkappa)}{\tkappa+1} \bigg(\frac{\alpha^2 D_0^2}{4\beta^2 {d}} \bigg)
 	\\
 	& \overset{}{\geq} 
 	\frac{\alpha(\beta + 1)}{\beta(\tkappa+1)}\frac{D_0^2}{4} - \frac{\alpha^2}{\beta {d} (\tkappa+1)} \frac{ D_0^2}{ 4} 
 	\\
 	& \geq
 	\frac{\alpha(\beta + 1)}{\beta(\tkappa+1)}\frac{D_0^2}{4} - \frac{\alpha^2}{\beta {d} (\tkappa+1)} \frac{ D_0^2}{ 4}
 	\\
 	& = 
 	\frac{\alpha(\beta + 1 - \alpha/d)}{\beta(\tkappa+1)}\frac{D_0^2}{4} > \frac{\alpha}{\alpha+\beta}\frac{D_0^2}{4} 
 	,
 \end{align*}
 where $(c)$ follows since $\tc \leq 1$ (as follows form \cref{lem:normgrad}), and the last inequality is due to the fact that by definition $\beta \geq \alpha$ and $d \geq 1$. Then combining above with \cref{eq:first_ab2,{eq:may15}} we have:

\begin{align}
 	\label{eq:lb_lin}
 	\E_{t} & [\g_t \dotp (\x_t-\w^*)] \nonumber
 	\geq \nonumber
 	\frac{2\gamma}{d} \rhoo'\big( \E_{\u_t}\brk[s]!{\abs{f(\x_t+\gamma\u_t) - f(\x_t-\gamma\u_t)}} \big) \bigg(  \frac{\alpha}{\alpha+\beta}\frac{D_0^2}{4} \bigg) 
 	\\
 	& \geq \nonumber
 	\frac{2\alpha D_0^2}{d (\alpha+\beta)4} \bigg(\frac{\tc \alpha D_0}{2\beta \sqrt{d}} \bigg) \rho'\bigg( \frac{\tc^2 \alpha^2 D_0^2 }{4\beta d}  \bigg)
 	~~(\text{from } \eqref{eq:may15}, \text{ where recall we needed to set } \gamma = \frac{\tc \alpha D_0}{2\beta \sqrt{d}})	
 	\\
 	& = \nonumber
 	\frac{2\alpha D_0^2}{d (\alpha+\beta)4} \bigg(\frac{\tc \alpha D_0}{2\beta \sqrt{d}} \bigg) c_\rho p\bigg( \frac{\tc^2 \alpha^2 D_0^2 }{4\beta d}  \bigg)^{p-1}
 	~~(\text{as, } \rho'(x) = c_\rho p x^{p-1} \text{ for any } x \in \R_+) 
 	\\
 	& =
 	B D_0^{2p+1}, ~~\bigg(\text{where we denote by, } B:= \frac{2c_\rho p}{(\alpha+\beta)}\Big( \big(\nicefrac{\alpha^2}{4\beta} \big)^{p} \frac{\tc^{2p-1} }{d^{\frac{2p+1}{2}}} \Big) \bigg)
 	.
 \end{align}

Plugging the above expression in \cref{eq:first_ab1}:

\begin{align}
 \label{eq:fin1ab}
     \E_t & [\norm{\x_{t+1}-\w^*}^2] \nonumber
     \leq 
     \norm{\x_t-\w^*}^2 - 2\eta \E_t[\brk[s]{ \g_t \dotp (\x_t-\w^*) }] + \eta^2 
     \\
     & \leq \nonumber 
     D_0^2
      - \eta B (D_0^{2p+1}) + \eta^2 ~~\bigg(\text{ where recall } B:= \frac{2c_\rho p}{(\alpha+\beta)}\Big( \big(\nicefrac{\alpha^2}{4\beta} \big)^{p} \frac{\tc^{2p-1} }{d^{\frac{2p+1}{2}}} \Big)   \bigg) 	\\
     & = 
     D_0^2 - B^2 {D_0^2}^{(2p+1)}, ~~\bigg(\text{ by setting } \eta = BD_0^{2p+1} = \frac{2c_\rho p}{(\alpha+\beta)}\Big( \big(\nicefrac{\alpha^2}{4\beta} \big)^{p} \frac{\tc^{2p-1} }{d^{\frac{2p+1}{2}}} \Big) D_0^{2p+1} \bigg)
 	.  
 \end{align}

Now let us fix some time-stamp $T$ and let us assume $\norm{\x_t - \w^*}\geq D_0/2$ for all $t = [T]$. 
 Then taking expectation over $\cH_{T+1}$ on both sides iteratively and summing over $t=1,\ldots,T$, note that we get:

\begin{align*}
 	\E_{\cH_{T+1}} [\norm{\x_{T+1}-\w^*}^2] 
 	& \leq 
 	D_0^2	- B^2 {D_0^2}^{(2p+1)}T	 
 	.
 \end{align*}

 Then if we set $T = \frac{1}{2B^2{D_0^{2}}^{2p}}$, at time $T+1$ we have:
 \[
 \E_{\cH_{T+1}} [\norm{\x_{T+1}-\w^*}^2] \leq D_0^2/2 < 3D_0^2/4.
 \]

\textbf{Case analysis 2: ($\exists$ at least an $\tau \in [T]$ such that $\norm{\x_{\tau}-\w^*} \leq D_0/2$).}
 In this case, from \cref{eq:first_ab1}, after $(T-\tau)$ steps we can have the expected value of $\norm{\x_{T+1}-\w^*}^2$ can be at most: 
 \[
 \E_{\cH_{T+1}} [\norm{\x_{T+1}-\w^*}^2] \leq \norm{\x_{\tau}-\w^*}^2 + (T-\tau)\eta^2 \leq D_0^2/4 + T\eta^2  = 3D_0^2/4,
 \]
 since recall that we set $\eta = BD_0^{2p+1} = \frac{2c_\rho p}{(\alpha+\beta)}\Big( \big(\nicefrac{\alpha^2}{4\beta} \big)^{p} \frac{\tc^{2p-1} }{d^{\frac{2p+1}{2}}} \Big) D_0^{2p+1}$.

This implies that for the above choice of $\gamma$ and $\eta$ we can get constant fraction reduction in the ``sub-optimality gap" $(\norm{\w_k-\w^*})$ after at most $O(\frac{1}{2B^2{D_0^{2}}^{2p}})$ time steps.
 \end{proof}

\subsection{Proof of \cref{thm:epgd}}

\thmepgd*

\begin{proof}[\textbf{Complete Proof of \cref{thm:epgd}}]
The proof of  \cref{thm:epgd} is based on the key claim of \cref{lem:epgd} which shows that after every epoch of length $t_k$, the distance of the resulting point $\w_{k+1}$ from the optimal $\w^*$ must decrease by at least a constant fraction. Recall from the statement of \cref{lem:epgd}, we have:

\lemepgd*

The claim of \cref{thm:epgd} now follows form the following \textbf{epoch-wise recursion argument: }

Let $\cH_{[k]}:= \{(\w_{k'})_{k'\in [k_\epsilon]},(\u_{t'},o_{t'})_{t' \in [\sum_{k'= 1}^{k}t_{k'}]}\} \cup \{\w_{k+1}\}$ denotes the complete history till the end of epoch $k$ starting from the first epoch $\forall k \in [k_\epsilon]$. 

Further, let us denote by $\cH_{k}:= \{\w_{k},(\u_{t'},o_{t'})_{t' \in [\sum_{k'= 1}^{k-1}t_{k'}+1,\sum_{k'= 1}^{k}t_{k'}]}\} \cup \{\w_{k+1}\}$ be the history only within epoch $k$. 

\textbf{Proof of Correctness. } From \cref{lem:epgd}, note we have already established that 
$E_{\cH_k}[\norm{\w_{k+1}-\w^*}^2 \mid \w_k] \leq {\frac{3}{4}}\norm{\w_{k}-\w^*}^2$.% with $\#$ steps $O(\frac{1}{2C^2{(3/4)^{2}}^{2p}})$ time steps

Applying the argument iteratively over $E$ epochs, and the law of iterated expectations, we have:
\begin{align}
\label{eq:may19a}
\E_{\cH_{[E]}}[\norm{\w_{E+1}-\w^*}^2 ] \leq (\nicefrac{3}{4})^E\norm{\w_{1}-\w^*}^2.
\end{align}

Thus choosing $E = \lceil \log_{4/3} (\frac{\beta D^2}{2\epsilon}) \rceil$, where $\norm{\w_1 - \w^*} \leq D$,
we have 
\begin{align*}
E \geq \log_{4/3} (\frac{\beta \norm{\w_1 - \w^*}^2}{2\epsilon})
	~
	\implies 
	~
	(\nicefrac{3}{4})^{E}\norm{\w_1 - \w^*}^2 \leq \frac{2\epsilon}{\beta}.
\end{align*}

Thus from \eqref{eq:may19a}, we get: 
\[
\E_{\cH_{[E]}}[\norm{\w_{E+1}-\w^*}^2 ] \leq \frac{2\epsilon}{\beta},
\]
and further applying $\beta$-smoothness of $f$, we get:
\begin{align*}
\E_{\cH_{[E]}}[f(\w_{E+1})-f(\w^*) ] \leq \E_{\cH_{[E]}}[\frac{\beta}{2}\norm{\w_{E+1}-\w^*}^2 ] \leq \epsilon,
\end{align*}
which proves the correctness of \cref{alg:epgd} for the choice of total number of epochs $k_{\epsilon}=E$.

\textbf{Proof of Sample Complexity. } In order to verify that \cref{alg:epgd} indeed converges to an $\epsilon$-optimal point in $O(\epsilon^{-2p})$ sample complexity, note we simply need to count the total sample complexity incurred in the $k_\epsilon$ epochwise runs of \kgd\, (see Line \#5 of \cref{alg:epgd}). However, by design of \ekgd\,(\cref{alg:epgd}), since \kgd$\big(\w_{k},\eta_k, \gamma_k,t_k\big)$ is run for only $t_k = \frac{1}{2B^2{(D_k^{2})}^{2p}}$ iterations, the total sample complexity of \ekgd\, becomes:

\begin{align*}
\sum_{k = 1}^{k_\epsilon} t_k 
	&= \frac{1}{2B^2}\sum_{k = 1}^{k_\epsilon} \frac{1}{{(D_k^{2})}^{2p}} 
	= \frac{1}{2B^2}\bigg( \frac{1}{{(D_1^{2})}^{2p}} + \frac{1}{{(D_2^{2})}^{2p}} + \cdots + \frac{1}{{(D_{k_\epsilon}^{2})}^{2p}} \bigg)
	\\
	& = \frac{1}{2B^2}\bigg( \frac{1}{{(D^{2})}^{2p}} + \frac{1}{{(\nicefrac{3}{4}D^{2})}^{2p}} + \frac{1}{{((\nicefrac{3}{4})^2 D^{2})}^{2p}} + \cdots + \frac{1}{{((\nicefrac{3}{4})^{k_\epsilon-1}D_{k_\epsilon}^{2})}^{2p}} \bigg)
	\\
	& = \frac{1}{2B^2 (D^2)^{2p}}\bigg( 1 + \frac{1}{(\nicefrac{3}{4})^{2p}} + \frac{1}{{((\nicefrac{3}{4})^2}^{2p}} + \cdots + \frac{1}{{((\nicefrac{3}{4})^{k_\epsilon-1})}^{2p}} \bigg)
	\\
	& = \frac{1}{2B^2 (D^2)^{2p}}\bigg( 1 + \frac{1}{(\nicefrac{3}{4})^{2p}} + \frac{1}{((\nicefrac{3}{4})^{2p})^2} + \cdots + \frac{1}{((\nicefrac{3}{4})^{2p})^{k_\epsilon-1}} \bigg)
	\\
	& = \frac{1}{2B^2 (D^2)^{2p}}\frac{(\nicefrac{4}{3}^{2p})^{k_\epsilon} -1}{\nicefrac{4}{3}^{2p}-1} \leq  \frac{1}{4B^2 (D^2)^{2p}}\Big( (\nicefrac{\beta D^2}{2 \epsilon})^{2p} -1 \Big) = O\Big(\frac{1}{B^2\epsilon^{2p}}\Big)
\end{align*}
where the last inequality is since $k_\epsilon = \lceil \log_{4/3} (\frac{\beta D^2}{2\epsilon}) \rceil$ by definition. 
Thus follows the claimed sample complexity of \ekgd\, in \cref{thm:epgd} and this concludes the proof.
\end{proof}

\section{Standard Results from Convex Optimization}
\label{app:std_res}	

The results covered in this section can be found in \cite{hazanbook,luenbook,boydbook,fletcherbook,nesterovbook,nocedalbook}.

\begin{defn}[Lipschitz Function]
Assume $\cD \subseteq \R^d$ be bounded decision space. Then any function $f: \cD \mapsto \R$ f is called $L$-Lipschitz over $\cD$ with respect to a norm $\norm{\cdot}$ if for all $\x, \y \in \cD$, we have: 
\[
\abs{f(\x) - f(\y)} \leq L\norm{\x - \y}.
\]
\end{defn}

\subsection{Useful properties for Convex Functions}

\begin{defn}[Convex Function]
\label{def:cvx}
Assume $\cD \subseteq \R^d$ be any convex and bounded decision space. Then any differential function $f: \cD \mapsto \R$ is called convex if for all $\x,\y \in \cD$,
\[
f(\x) - f(\y) \geq \nabla f(\y)^\top (\x - \y).
\]
\end{defn}

\begin{restatable}[First Order Optimality Condition~\citep{luenbook,boydbook}]{lem}{foo}
\label{lem:foo}
Assume $f: \cD \mapsto \R$ is a convex function and $\x^*$ be the minimizer of $f$. Then for any $\x \in \cD$, 
\[
\nabla f(\x^*)^\top (\y - \x^*) \geq 0
\]
\end{restatable}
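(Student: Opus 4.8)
The plan is to reduce this multivariate optimality condition to a one-dimensional observation along the segment joining $\x^*$ to an arbitrary feasible point. First I would fix any $\y \in \cD$ and, using convexity of $\cD$, observe that $\x_t := (1-t)\x^* + t\y = \x^* + t(\y - \x^*)$ lies in $\cD$ for every $t \in [0,1]$. Since $\x^*$ minimizes $f$ over $\cD$, this immediately gives $f(\x_t) \ge f(\x^*)$, and hence $\tfrac{1}{t}\bigl(f(\x_t) - f(\x^*)\bigr) \ge 0$ for all $t \in (0,1]$.

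The next step is to let $t \to 0^+$. Since $f$ is differentiable (as in \cref{def:cvx}), the right-hand directional derivative of $f$ at $\x^*$ along $\y - \x^*$ exists and equals $\nabla f(\x^*)^\top(\y - \x^*)$, i.e.
\[
    \nabla f(\x^*)^\top(\y - \x^*) = \lim_{t\to 0^+} \frac{f(\x^* + t(\y-\x^*)) - f(\x^*)}{t} \ge 0,
\]
the inequality following because every term of the difference quotient is nonnegative. As $\y \in \cD$ was arbitrary, this proves the claim.

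I do not anticipate any genuine obstacle here, as this is a textbook fact, but there is one point worth handling carefully: the argument must not implicitly assume that $\x^*$ is an interior point of $\cD$. The segment construction sidesteps this, since $\x_t \in \cD$ holds by convexity whether or not $\x^*$ lies on the boundary, and only the one-sided derivative at $t=0$ is used. I would also note that a direct appeal to the first-order convexity inequality of \cref{def:cvx} (with $\y$ and $\x^*$ in place of its two arguments) yields only $\nabla f(\x^*)^\top(\y - \x^*) \le f(\y) - f(\x^*)$, which is the wrong direction; hence the difference-quotient argument above is the cleaner route and the one I would present.
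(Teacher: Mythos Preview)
Your proof is correct and is precisely the standard segment/directional-derivative argument from the textbooks the paper cites; the paper itself does not supply a proof of \cref{lem:foo} but simply records it as a standard fact from \citep{luenbook,boydbook}. Your closing observation is also accurate: the convexity inequality of \cref{def:cvx} alone only bounds $\nabla f(\x^*)^\top(\y-\x^*)$ from above by $f(\y)-f(\x^*)$, so the one-sided difference-quotient route is indeed the right way to get the nonnegativity, especially at boundary minimizers.
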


\subsection{Useful properties for $\beta$-Smooth Convex Functions}

\begin{defn}[$\beta$-Smooth Convex Function]
\label{def:cvxb}
Assume $\cD \subseteq \R^d$ be any convex and bounded decision space. Then any differential and convex function $f: \cD \mapsto \R$ is also called $\beta$-smooth (any $\beta > 0$) if for all $\x,\y \in \cD$,
\[
f(\x) - f(\y) \leq \nabla f(\y)^\top (\x - \y) + \frac{\beta}{2}\norm{\x-\y}^2.
\]
\end{defn}

\begin{restatable}[Properties of $\beta$-smooth functions~\citep{hazanbook,bubeckbook}]{lem}{propb}
\label{lem:prop_b}
Suppose $f: \cD \mapsto \R$ is a $\beta$-smooth convex function. Then for all $\x,\y \in \R^d$, 
\begin{align*}
  f(\x) - f(\y) \leq \nabla f(\x)^\top(\x-\y) & - \frac{1}{2\beta}\norm{\nabla f(\x) - \nabla f(\y)}^2
  \\
  \norm{\nabla f(\x) - \nabla f(\y)} & \leq \beta \norm{ \x - \y}.
\end{align*}
Further if $\x^*$ is the minimizer of $f$ and $\x^* \in \text{Int}(\cD)$ (i.e. $\x^*$ belong to the interior of $f$'s domain $\cD$), then
\begin{align*}
~~& \norm{\nabla f(\x)}^2 \leq 2\beta (f(\x) - f(\x^*)
\\
~~& f(\x) - f(\x^*) \leq \frac{\beta}{2} \norm{ \x - \x^*}^2. 
\end{align*}	
\end{restatable}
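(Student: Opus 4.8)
The plan is to derive the four inequalities in order, building each on the previous ones and on the two defining relations: convexity (\cref{def:cvx}), $f(\x)-f(\y)\ge \nabla f(\y)^\top(\x-\y)$, and $\beta$-smoothness (\cref{def:cvxb}), $f(\x)-f(\y)\le \nabla f(\y)^\top(\x-\y)+\tfrac{\beta}{2}\norm{\x-\y}^2$. I would first record the observation used for the last two claims: when $\x^*$ is a minimizer lying in the interior of $\cD$, applying \cref{lem:foo} in every direction (both $\y$ and its reflection through $\x^*$ remain feasible) forces $\nabla f(\x^*)=\0$.

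For the first (and central) inequality, I would introduce the auxiliary function $g(\z):=f(\z)-\nabla f(\x)^\top\z$. Since $f$ is convex and $\beta$-smooth, so is $g$, and $\nabla g(\x)=\0$, so $\x$ is a global minimizer of $g$. Minimizing the smoothness upper bound $g(\w)\le g(\z)+\nabla g(\z)^\top(\w-\z)+\tfrac{\beta}{2}\norm{\w-\z}^2$ over $\w$ (optimum at $\w=\z-\tfrac1\beta\nabla g(\z)$) gives $g(\x)\le g(\z)-\tfrac{1}{2\beta}\norm{\nabla g(\z)}^2$, where I use $g(\x)\le g(\w)$ for all $\w$. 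Setting $\z=\y$ and unwinding the definition of $g$ (with $\nabla g(\y)=\nabla f(\y)-\nabla f(\x)$) yields exactly the first claim, $f(\x)-f(\y)\le\nabla f(\x)^\top(\x-\y)-\tfrac{1}{2\beta}\norm{\nabla f(\x)-\nabla f(\y)}^2$.

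Next I would add this inequality to its copy with $\x,\y$ swapped; the function-value terms cancel and I obtain co-coercivity, $(\nabla f(\x)-\nabla f(\y))^\top(\x-\y)\ge\tfrac1\beta\norm{\nabla f(\x)-\nabla f(\y)}^2$. Bounding the left side by Cauchy–Schwarz and cancelling one factor of $\norm{\nabla f(\x)-\nabla f(\y)}$ (the bound being trivial when the two gradients coincide) delivers the Lipschitz-gradient claim $\norm{\nabla f(\x)-\nabla f(\y)}\le\beta\norm{\x-\y}$. For the penultimate inequality I would instantiate the first inequality with the roles $\x\mapsto\x^*$, $\y\mapsto\x$ and use $\nabla f(\x^*)=\0$, turning it into $f(\x^*)-f(\x)\le-\tfrac{1}{2\beta}\norm{\nabla f(\x)}^2$, i.e. $\norm{\nabla f(\x)}^2\le 2\beta\bigl(f(\x)-f(\x^*)\bigr)$. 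Finally, the last inequality is immediate from the smoothness definition taken with $\y=\x^*$ and $\nabla f(\x^*)=\0$.

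I expect the only non-mechanical step to be the first inequality, since it requires the auxiliary-function-plus-minimization argument rather than a one-line substitution; the remaining three reduce to direct consequences once that inequality and the fact $\nabla f(\x^*)=\0$ are in hand.
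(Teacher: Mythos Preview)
Your proof is correct and follows the standard textbook derivation (the auxiliary-function argument for the first inequality, co-coercivity plus Cauchy--Schwarz for the second, and specialization at $\x^*$ with $\nabla f(\x^*)=\0$ for the last two). The paper itself does not supply a proof of this lemma: it is stated in \cref{app:std_res} as a standard result and attributed directly to \cite{hazanbook,bubeckbook}, so there is no in-paper argument to compare against. Your write-up is essentially what one finds in those references.
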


\subsection{Useful properties for $\alpha$-Strongly Convex Functions}

\begin{defn}[$\alpha$-Strongly Convex Function]
\label{def:cvxa}
Assume $\cD \subseteq \R^d$ be any convex and bounded decision space. Then any differential and convex function $f: \cD \mapsto \R$ is also called $\alpha$-strongly convex (any $\alpha > 0$) if for all $\x,\y \in \cD$,
\[
f(\x) - f(\y) \geq \nabla f(\y)^\top (\x - \y) + \frac{\alpha}{2}\norm{\x-\y}^2.
\]
\end{defn}

\begin{restatable}[]{lem}{propa}
	\label{lem:prop_a}
	%\x^*:= \arg\min_{\x \in \R}f(\x)
	If $f: \cD \mapsto \R$ is an $\alpha$-strongly convex function, with $\x^*$ being the minimizer of $f$. Then for any $\x, \y, \z \in \cD$,
	\begin{align*}
	& \norm{\nabla f(\x)-\nabla f(\y)} \geq \alpha \|\x - \y\|
	\\
	& \norm{\nabla f(\z)} \geq \alpha \|\z - \x^*\|
	\\
	& \frac{\alpha}{2}\|\x^* - \x\|^2 \le f(\x) - f(\x^*).		
	\end{align*}
\end{restatable}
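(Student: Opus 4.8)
These three bounds are the standard strong-monotonicity and quadratic-growth properties of an $\alpha$-strongly convex function, and the plan is to derive all of them from the defining inequality of $\alpha$-strong convexity (\cref{def:cvxa}) together with the first-order optimality condition for the minimizer (\cref{lem:foo}). I would prove them in the order (iii), (i), (ii), because (ii) comes out most cleanly once (iii) is available.

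For (iii), I would instantiate \cref{def:cvxa} with base point $\x^*$, which gives $f(\x) - f(\x^*) \ge \nabla f(\x^*)^\top(\x - \x^*) + \tfrac{\alpha}{2}\norm{\x - \x^*}^2$; since $\x^*$ minimizes $f$ over $\cD$, \cref{lem:foo} yields $\nabla f(\x^*)^\top(\x - \x^*) \ge 0$, and discarding this nonnegative term leaves exactly $f(\x) - f(\x^*) \ge \tfrac{\alpha}{2}\norm{\x - \x^*}^2$. For (i), I would apply \cref{def:cvxa} twice, once with base point $\y$ and once with base point $\x$:
\begin{align*}
 f(\x) - f(\y) &\ge \nabla f(\y)^\top(\x - \y) + \tfrac{\alpha}{2}\norm{\x - \y}^2 , \\
 f(\y) - f(\x) &\ge \nabla f(\x)^\top(\y - \x) + \tfrac{\alpha}{2}\norm{\x - \y}^2 ,
\end{align*}
then add the two inequalities and rearrange to obtain $(\nabla f(\x) - \nabla f(\y))^\top(\x - \y) \ge \alpha\norm{\x - \y}^2$; Cauchy--Schwarz on the left-hand side and division by $\norm{\x - \y}$ (the case $\x = \y$ being trivial) then give $\norm{\nabla f(\x) - \nabla f(\y)} \ge \alpha\norm{\x - \y}$.

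For (ii), I would deliberately avoid assuming that $\x^*$ lies in the interior of $\cD$ (which would force $\nabla f(\x^*) = \0$ and let one simply substitute $\y = \x^*$ into (i)). Instead I would apply \cref{def:cvxa} with first argument $\x^*$ and base point $\z$, giving $f(\x^*) - f(\z) \ge \nabla f(\z)^\top(\x^* - \z) + \tfrac{\alpha}{2}\norm{\z - \x^*}^2$, i.e.\ $\nabla f(\z)^\top(\z - \x^*) \ge \big(f(\z) - f(\x^*)\big) + \tfrac{\alpha}{2}\norm{\z - \x^*}^2$; lower bounding $f(\z) - f(\x^*)$ by $\tfrac{\alpha}{2}\norm{\z - \x^*}^2$ via (iii) yields $\nabla f(\z)^\top(\z - \x^*) \ge \alpha\norm{\z - \x^*}^2$, and one more application of Cauchy--Schwarz followed by division by $\norm{\z - \x^*}$ finishes the claim. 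I do not expect any real obstacle here; the only point that calls for a little care is to express optimality of $\x^*$ through the inequality $\nabla f(\x^*)^\top(\x - \x^*) \ge 0$ rather than through $\nabla f(\x^*) = \0$, so that the argument remains valid even when the minimizer sits on the boundary of $\cD$.
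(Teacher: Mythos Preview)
Your proof is correct and essentially the same as the paper's: parts (i) and (iii) are identical, and for (ii) you also arrive at $\nabla f(\z)^\top(\z-\x^*)\ge\alpha\norm{\z-\x^*}^2$ and finish with Cauchy--Schwarz. The only cosmetic difference is that the paper obtains this inequality by specializing the monotonicity inequality from (i) to $\y=\z,\ \x=\x^*$ and invoking first-order optimality to drop the $\nabla f(\x^*)$ term, whereas you get it by applying strong convexity at base point $\z$ and then using (iii); both routes use the same ingredients.
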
	

\begin{proof}
		This simply follows by the properties of $\alpha$-strongly convex function. Note by definition of $\alpha$-strong convexity, for any $\x,\y \in \R$,
		\[
		f(\x) - f(\y) \ge \nabla f(\y)^\top (\x - \y) +  \frac{\alpha}{2}\|\x - \y\|^2. 
		\]
Similarly,
		\[
		f(\y) - f(\x) \ge \nabla f(\x)^\top (\y - \x) +  \frac{\alpha}{2}\|\x - \y\|^2. 
		\]
Adding we get:
\begin{align*}
(\nabla f(\y)-\nabla f(\x))^\top (\y - \x) \ge {\alpha}\|\x - \y\|^2
\end{align*}

Now applying Cauchy-Schwarz inequality to the left hand side of the above inequality yields the first result.

To get the second result, let us use $\y = \z$ and $\x = \x^*$ in the above inequality, which along with the first order optimality yields (\cref{lem:foo}):
\[
\nabla f(\z)^\top (\z - \x^*) \ge {\alpha}\|\z - \x^*\|^2
\]		 
The result now follows by again applying Cauchy-Schwarz inequality to the left hand side of the above inequality. 
Finally the last part of the proof simply follows setting $\y = \x^*$ and from the first order optimality condition (see \cref{lem:foo}).
\end{proof}

\subsection{Useful properties for $\alpha$-Strongly Convex and  $\beta$-Smooth Convex Functions $(\beta \geq \alpha)$}

\begin{restatable}[Properties of $\beta$-smooth and $\alpha$-strongly convex functions ~\citep{bubeckbook}]{lem}{propab}
	\label{lem:prop_ab}
	Suppose $f: \cD \mapsto \R$ is a $\beta$-smooth and $\alpha$-strongly convex function. Then for all $\x,\y \in \cD$
\begin{align*}
\big(\nabla f(\x) - \nabla f(\y)\big)^\top(\x-\y) \geq   \frac{\alpha\beta}{\alpha+\beta}\norm{\x-\y}^2
+ 
\frac{1}{\alpha+\beta}\norm{\nabla f(\x) - \nabla f(\y)}^2
\end{align*}	
\end{restatable}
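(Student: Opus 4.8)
The plan is to reduce the claim to the standard co-coercivity inequality for smooth convex functions by peeling off the quadratic part responsible for strong convexity. Concretely, I would introduce the auxiliary function $g(\x) := f(\x) - \tfrac{\alpha}{2}\norm{\x}^2$. Substituting $f = g + \tfrac{\alpha}{2}\norm{\cdot}^2$ into \cref{def:cvxa} shows that $g$ is convex, and substituting it into \cref{def:cvxb} shows that $g$ is $(\beta-\alpha)$-smooth; both checks are immediate from the definitions. A minor edge case is $\alpha=\beta$, where $g$ is affine, hence $\nabla g$ is constant and so $\nabla f(\x)-\nabla f(\y)=\alpha(\x-\y)$; plugging this in makes both sides of the asserted inequality equal $\alpha\norm{\x-\y}^2$, so the lemma holds with equality. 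Thus in what follows I assume $\beta>\alpha$.

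Next I would record the co-coercivity of the $(\beta-\alpha)$-smooth convex function $g$: applying the first inequality of \cref{lem:prop_b} to $g$ once as stated and once with $\x$ and $\y$ interchanged, and adding the two, the function-value terms cancel and, after rearranging, one obtains
\begin{align*}
\big(\nabla g(\x) - \nabla g(\y)\big)^\top(\x-\y) \;\geq\; \frac{1}{\beta-\alpha}\,\norm{\nabla g(\x)-\nabla g(\y)}^2 .
\end{align*}
The remaining step is pure bookkeeping: substitute $\nabla g(\x)=\nabla f(\x)-\alpha\x$ (and likewise at $\y$), and abbreviate $d:=\big(\nabla f(\x)-\nabla f(\y)\big)^\top(\x-\y)$, $s:=\norm{\x-\y}^2$, $q:=\norm{\nabla f(\x)-\nabla f(\y)}^2$. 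Then the left-hand side above equals $d-\alpha s$, while expanding $\norm{(\nabla f(\x)-\nabla f(\y))-\alpha(\x-\y)}^2 = q-2\alpha d+\alpha^2 s$, so the displayed inequality becomes $(\beta-\alpha)(d-\alpha s)\ge q-2\alpha d+\alpha^2 s$.

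Finally I would solve this linear inequality for $d$: collecting the $d$-terms on the left and simplifying the $s$-terms on the right gives $(\alpha+\beta)\,d \ge q + \alpha\beta\,s$, i.e.\ $d \ge \tfrac{1}{\alpha+\beta}\,q + \tfrac{\alpha\beta}{\alpha+\beta}\,s$, which is precisely the claimed inequality. I do not expect any genuine obstacle here: the only things to be careful about are verifying that $g$ carries the stated smoothness and convexity moduli (immediate from the definitions) and keeping the algebra in the last two steps straight; the degenerate case $\alpha=\beta$ must be handled separately as noted above.
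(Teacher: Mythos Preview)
Your proof is correct and is in fact the standard argument (this is Lemma~3.11 in Bubeck's monograph, which the paper cites). The paper itself does not supply a proof of \cref{lem:prop_ab}; it simply states the lemma as a known result from \citep{bubeckbook}, so there is nothing to compare against beyond noting that your derivation matches the textbook one: subtract the strongly convex quadratic to obtain a $(\beta-\alpha)$-smooth convex function, apply co-coercivity, and back-substitute. Your handling of the degenerate case $\alpha=\beta$ is also clean and worth keeping.
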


}

\end{document}